\newtheorem{thm}{Theorem}[section]
\newtheorem{lem}[thm]{Lemma}
\newtheorem{cor}[thm]{Corollary}
\newtheorem{prop}[thm]{Proposition}
\newtheorem*{mainthm}{Main Theorem}
\theoremstyle{definition}
\newtheorem*{ques}{Question}
\newtheorem*{claim}{Claim}
\newcommand{\D}{\mathbb{D}}
\newcommand{\R}{\mathbb{R}}
\newcommand{\Z}{\mathbb{Z}}
\newcommand{\N}{\mathbb{N}}
\newcommand{\Q}{\mathbb{Q}}
\newcommand{\C}{\mathbb{C}}
\newcommand{\EC}{\widehat{\mathbb{C}}}
\newcommand{\A}{\mathbb{A}}
\newcommand{\Crit}{\textup{Crit}}
\newcommand{\Int}{\textup{int}}
\newcommand{\Ext}{\textup{ext}}
\newcommand{\diam}{\textup{diam}}
\newcommand{\ii}{\textup{i}}
\newcommand{\id}{\textup{id}}
\newcommand{\re}{\textup{Re\,}}
\newcommand{\area}{\textup{area}}
\makeatletter\@addtoreset{equation}{section}\makeatother
\begin{document}

\author[Y. Fu]{Yuming Fu}
\address{College of Mathematics and Statistics, Shenzhen University, Shenzhen 518061, P. R. China}
\email{yumingfuxy@szu.edu.cn}

\author[F. Yang]{Fei Yang}
\address{Department of Mathematics, Nanjing University, Nanjing 210093, P. R. China}
\email{yangfei@nju.edu.cn}

\author[G. Zhang]{Gaofei Zhang}
\address{Department of Mathematics, Nanjing University, Nanjing 210093, P. R. China}
\email{zhanggf@nju.edu.cn}

\title[Quadratic rational maps with Siegel disks]{Quadratic rational maps with a $2$-cycle of Siegel disks}

\begin{abstract}
For the family of quadratic rational functions having a $2$-cycle of bounded type Siegel disks, we prove that each of the boundaries of these Siegel disks contains at most one critical point. In the parameter plane, we prove that the locus for which the boundaries of the $2$-cycle of Siegel disks contain two critical points is a Jordan curve.
\end{abstract}

\subjclass[2010]{Primary: 37F45; Secondary: 37F10, 37F20}

\keywords{Periodic Siegel disks; parameter space; Thurston equivalent}

\date{\today}



\maketitle


\section{Introduction}\label{introduction}

The hyperbolic components of rational maps have been studied a lot in the past $3$ decades, and one important study object is their boundedness. For quadratic rational maps,
Rees divided the hyperbolic components into $4$ types (see also \cite{Mil93}), and proved the boundedness of certain $1$-dimension loci in hyperbolic components \cite{Ree90}.
Epstein proved that the hyperbolic component of quadratic rational maps possessing two distinct attracting cycles is bounded if and only if neither attractor is a fixed point \cite{Eps00}.

To understand the structures of the hyperbolic component of quadratic rational maps, one usually studies some specific $1$-dimensional slices. Some other examples in this direction include \cite{AY09} and \cite{DFGJ14} etc. Moreover, understanding the maps on the boundaries of hyperbolic components is very helpful to grasp the characterizations of the hyperbolic components. See \cite{DeM05} and \cite{DeM07}.

As a partial complement of Epstein's study in \cite{Eps00}, a meaningful problem is to consider the quadratic rational maps possessing only one attracting cycle of period $2$ for which each of the immediate attracting basin contains exactly one critical point. A natural question is, what will happen if the modulus of the multiplier of the $2$-cycle attracting orbit tends to $1$? A little bit similar situation has been considered in \cite{BEE13} for the rationally indifferent case. In this paper, we study the limit parameter slices of irrationally indifferent case and hope that the main results can shed some lights on the study of hyperbolic components of quadratic rational maps.

\medskip

Let $f$ be a holomorphic function. A periodic Fatou component $U$ of $f$ is called a Siegel disk if there exists a minimal integer $p\geq 1$, such that $f^{\circ p}:U\to U$ is conformally conjugate to the irrational rotation $R_\theta(\zeta)=e^{2\pi\ii\theta}\zeta$ for some $\theta\in\R\setminus\Q$. The collection $\{U,f(U), \cdots, f^{\circ (p-1)}(U)\}$ is called a $p$-cycle of Siegel disks of $f$. In particular, $U$ contains a periodic point $z_0$ satisfying $(f^{\circ p})'(z_0)=e^{2\pi\ii\theta}$, which is called a Siegel point. If $p=1$, then $U$ is called a fixed Siegel disk and $z_0$ is a fixed Siegel point.

In the dynamical plane, the topology of the boundaries of Siegel disks has been studied extensively. They are motivated by the prediction of Douady and Sullivan that every Siegel disk of a rational map with degree at least two is a Jordan domain. We refer to \cite{PZ04}, \cite{Zak10}, \cite{Zha11}, \cite{SY21}, \cite{Che22b} and the references therein for corresponding results.

In the parameter plane, the results relating to the Siegel disks are much less. Zakeri considered the cubic polynomials having a fixed bounded type Siegel disk, and proved that the locus for which the fixed Siegel disk contains the both critical points in the boundary is a Jordan curve  \cite{Zak99}. One may refer to \cite{KZ09} and \cite{Yan13} for the similar results in the parameter spaces of some other holomorphic families containing a fixed Siegel disk.

\medskip
In this paper, we are interested in a special slice in the space of quadratic rational maps, they are maps containing a $2$-cycle of bounded type Siegel disks. In some sense, these maps can be seen as the limit maps of the quadratic rational maps containing a $2$-cycle of geometrically attracting periodic points as the multiplier tends to the point on the unit circle with irrational angle.

Let $\theta$ be an irrational number of bounded type, i.e., the continued fraction expansion $\theta=[a_0;a_1,a_2,\cdots,a_n,\cdots]$ satisfies $\sup_n \{a_n\}<+\infty$. Suppose that $f$ is a quadratic rational map having a $2$-cycle of Siegel disks with rotation number $\theta$. By a direct calculation, $f$ is conformally conjugate to a map in the following family (see  \S\ref{sec:one}):
\begin{equation}\label{equ:family}
\Sigma_\theta:=\left\{f_{\alpha}(z)=\alpha\,\frac{1+e^{2\pi\ii\theta} z}{z+z^2}:~\alpha\in \C\setminus\{0\} \right\}.
\end{equation}
On the other hand, one may verify that for any $\alpha\in\C\setminus\{0\}$, $f_\alpha\in\Sigma_\theta$ has a $2$-cycle of Siegel disks $\{\Delta_{\alpha}^0,\Delta_{\alpha}^{\infty}\}$ which contains the $2$-cycle $\{0,\infty\}$ whose multiplier is $\lambda=e^{2\pi\ii\theta}$ (see \S\ref{sec:one}). Every $f_\alpha$ has exactly two different critical points which are independent of $\alpha$:
\begin{equation}
\{c_1,c_2\} =\left\{-(1+\sqrt{1-\lambda})^{-1},~-(1-\sqrt{1-\lambda})^{-1}\right\}.
\end{equation}
Here we do not specify which critical point is $c_1$ temporarily. According to \cite{Zha11}, $\partial\Delta_{\alpha}^0$ and $\partial\Delta_{\alpha}^{\infty}$ are quasicircles, and $\partial\Delta_{\alpha}^0\cup\partial\Delta_{\alpha}^{\infty}$ contains at least one critical point $c_1$ or $c_2$ (see also \cite{GS03}).

\medskip
For a Jordan curve $\Gamma$ in $\C$, we use $\Gamma_{\Ext}$ to denote the unbounded component of $\C\setminus\Gamma$ and use $\Gamma_{\Int}$ to denote the bounded. In this paper, we prove the following main result (see Figure \ref{Fig_parameter}).

\begin{mainthm}\label{maintheorem}
For any bounded type $\theta$, each of $\partial\Delta_\alpha^0$ and $\partial\Delta_\alpha^\infty$ contains at most one critical point for all $f_\alpha\in\Sigma_\theta$, and there exist a marking $c_1=c_1(\theta)$, $c_2=c_2(\theta)$ and a Jordan curve $\Gamma=\Gamma_\theta\subset\Sigma_\theta$ separating $0$ from $\infty$ such that
\begin{enumerate}
\item If $\alpha\in\Gamma$, then $c_1\in\partial\Delta_{\alpha}^0$ and $c_2\in\partial\Delta_{\alpha}^\infty$;
\item If $\alpha\in\Gamma_{\Ext}$, then $c_1\in\partial\Delta_{\alpha}^0$ while $\partial\Delta_{\alpha}^\infty$ contains no critical point; and
\item If $\alpha\in\Gamma_{\Int}\setminus\{0\}$, then $c_2\in \partial\Delta_{\alpha}^\infty$ while $\partial\Delta_{\alpha}^0$ contains no critical point.
\end{enumerate}
Moreover, $\partial\Delta_{\alpha}^0$ and $\partial\Delta_{\alpha}^\infty$ depend continuously on $f_\alpha\in\Sigma_\theta$.
\end{mainthm}

\begin{figure}[!htpb]
 \setlength{\unitlength}{1mm}
  \centering
  \includegraphics[width=0.58\textwidth]{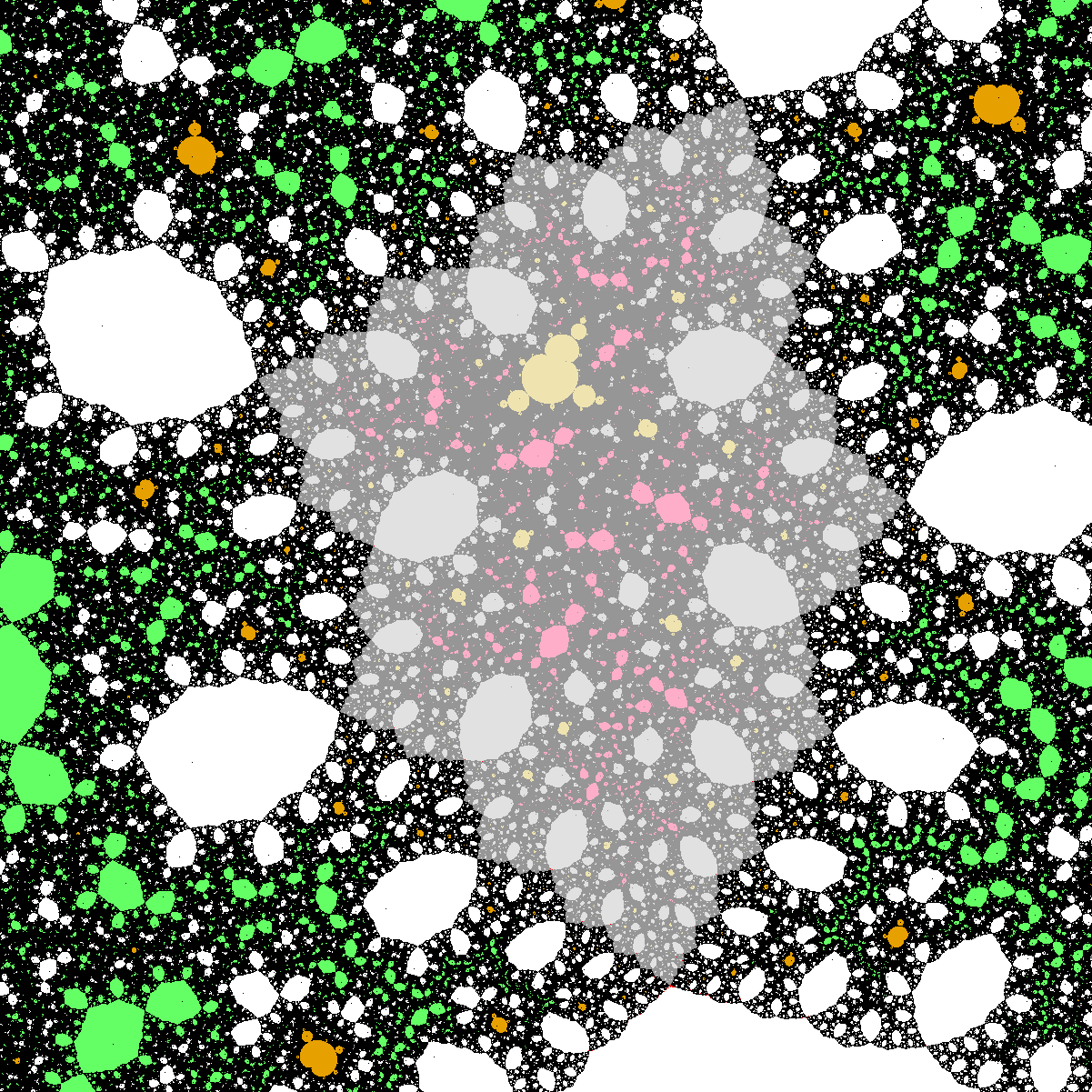}
  \caption{The parameter plane of $\Sigma_\theta$ with $\theta=(\sqrt{5}-1)/2$. One may observe the Jordan curve $\Gamma$ clearly, which is the common boundary of the deeply and lightly colored parts. Some Siegel capture components and baby Mandelbrot sets can be seen also. Figure range: $[-2,2]\times [-2.7, 1.3]$.}
  \label{Fig_parameter}
\end{figure}

In fact, we can clarify the marking of the critical points in the Main Theorem for some specific $\theta$'s (see \S\ref{subsec:theta}). In particular, when $\theta\in(\tfrac{1}{6},\tfrac{5}{6})$ is of bounded type, then the marking is
\begin{equation}\label{equ:c-1-2}
c_1 =-(1+\sqrt{1-\lambda})^{-1} \text{\quad and\quad} c_2=-(1-\sqrt{1-\lambda})^{-1},
\end{equation}
where the square root branch is chosen as $\re\sqrt{1-\lambda}>0$.
We believe that the above marking holds for all bounded type irrational numbers. However, our method in this paper cannot imply this, especially when $\theta$ is very close to $0$ (i.e., when $\lambda$ is very close to $1$). In this situation, the two critical points $c_1$, $c_2$ are very close and $f_\alpha$ can be seen as a small perturbation of $z\mapsto \alpha/z$, whose second iteration is the identity. Therefore, we raise the following:

\begin{ques}
Does $-(1+\sqrt{1-\lambda})^{-1}\in \partial\Delta_{\alpha}^0$ ($\alpha\in\Gamma_\theta$) hold for all bounded type $\theta$?
\end{ques}

This paper is organized as following:

In \S\ref{sec:one} we study the location of the critical points and prove the continuous dependence of $\partial\Delta_\alpha^0$ and $\partial\Delta_\alpha^\infty$ on $\alpha\in\Sigma_\theta$. In particular, we prove that each of $\partial\Delta_\alpha^0$ and $\partial\Delta_\alpha^\infty$ contains at most one critical point (Lemma \ref{lem:only-one}) and the key point is to exclude a case by studying the Thurston obstruction.

In \S\ref{limitdynamics}, we study the limit dynamics of $f_\alpha^{\circ 2}$ as $\alpha$ tends to the singularities $0$ and $\infty$, and give a compactification of $\Sigma_\theta$. We prove that $\partial\Delta_\alpha^0\cup\partial\Delta_\alpha^\infty$ contains exactly one critical point if $\alpha$ is sufficiently large or small. In particular, there exists a marking of critical points $c_1=c_1(\theta)$ and $c_2=c_2(\theta)$ such that $c_1\in\partial\Delta_\alpha^0$ if $\alpha$ is large and $c_2\in\partial\Delta_\alpha^\infty$ if $\alpha$ is small (Corollary \ref{cor:Siegel-crit}). We also prove that the marking can be chosen as in \eqref{equ:c-1-2} for some specific $\theta$'s (Proposition \ref{prop:bounded}).

In \S\ref{gamma}, we analyze the combinations of the Fatou components of $f_\alpha$ and prove that the Julia set of $f_\alpha$ has zero Lebesgue measure for all $\alpha\in\Gamma$, where $\Gamma\subset\Sigma_\theta$ is the locus of $\alpha$ such that $\partial\Delta_\alpha^0\cup\partial\Delta_\alpha^\infty$ contains both of the critical points.

In \S\ref{proof} we prove that $\Gamma$ is a Jordan curve by an argument of rigidity. For any $\alpha\in\Gamma$, the combination of $f_\alpha$ is determined by the conformal angle $A(\alpha)$ of the two critical points $c_1$ and $c_2$. A key ingredient in the proof of rigidity is to show that two maps in $\Gamma$ are Thurston equivalent if they have the same conformal angle (Lemma \ref{injection}).

\medskip
For the study of the parameter spaces of the holomorphic families containing a fixed Siegel disk, one may also refer to \cite{BH01}, \cite{BF10}, \cite{BCOT21}, \cite{Zak18} and \cite{Che20B}.

\medskip
\noindent\textbf{Acknowledgements.} The authors would like to thank Arnaud Ch\'{e}ritat for providing an algorithm to draw Figure \ref{Fig_parameter}, and the referee for very insightful and detailed comments and corrections.
This work was supported by NSFC (grant Nos.\,12071210, 12171276) and NSF of Jiangsu Province (grant No.\,BK20191246).

\section{Cycle of Siegel disks}\label{sec:one}

In this section, we first prove that $\Sigma_\theta$ is the desired family, and then prove some basic properties of the $2$-cycle of Siegel disks $\{\Delta_\alpha^0,\Delta_\alpha^\infty\}$. Finally we prove that $\partial\Delta_\alpha^0\cup\partial\Delta_\alpha^\infty$ moves continuously as $\alpha$ varies in $\Sigma_\theta$.

\subsection{The desired family}

In the following, the irrational number $\theta$ is assumed to be of bounded type, and we identify the parameter space $\Sigma_\theta$ with $\C\setminus\{0\}$.

\begin{lem}\label{sigma}
Let $f$ be a quadratic rational map having a $2$-cycle of Siegel disks with rotation number $\theta$. Then $f$ is conformally conjugate to $f_\alpha\in\Sigma_\theta$ for some $\alpha\in\C\setminus\{0\}$. Moreover, every $f_\alpha\in\Sigma_\theta$ has a $2$-cycle of Siegel disks $\{\Delta_\alpha^0,\Delta_\alpha^\infty\}$ containing the $2$-cycle $\{0,\infty\}$ whose multiplier is $e^{2\pi\ii\theta}$.
\end{lem}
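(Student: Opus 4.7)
The plan is to split the statement into a normalization argument for the first claim and a direct computation for the second.

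For the first claim, let $\{z_0, z_1\}$ be the $2$-cycle of Siegel points of $f$. First I would conjugate by the M\"obius transformation sending $z_0\mapsto 0$ and $z_1\mapsto\infty$, so that after conjugation $f(0)=\infty$ and $f(\infty)=0$. A degree-two rational map with these properties must take the form $g(z)=(bz+c)/(z(z-a))$ with $c\neq 0$ and $bz+c$ coprime to $z(z-a)$. A short computation of $g^{\circ 2}$ near $0$ shows that the case $a=0$ forces $(g^{\circ 2})'(0)=0$, i.e.\ a superattracting $2$-cycle, which contradicts the Siegel hypothesis; hence $a\neq 0$. The remaining freedom to conjugate by $z\mapsto\mu z$ (which preserves $\{0,\infty\}$) can then be used with $\mu=-a$ to normalize the denominator to $z(z+1)=z+z^2$. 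In the resulting form $g(z)=(b'z+c')/(z+z^2)$, I would expand $g^{\circ 2}$ as a power series at $0$ and compute $(g^{\circ 2})'(0)=b'/c'$; matching this with the required multiplier $e^{2\pi\ii\theta}=\lambda$ forces $b'=c'\lambda$, so that $g(z)=\alpha(1+\lambda z)/(z+z^2)=f_\alpha(z)$ with $\alpha:=c'\in\C\setminus\{0\}$.

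For the second claim, let $f_\alpha\in\Sigma_\theta$ be given. Directly, $f_\alpha(0)=\infty$ (the numerator equals $\alpha\neq 0$ while the denominator vanishes) and $f_\alpha(z)\sim\alpha\lambda/z\to 0$ as $z\to\infty$, so $\{0,\infty\}$ is a $2$-cycle. Expanding $f_\alpha(z)=\alpha/z+\alpha(\lambda-1)+O(z)$ near $0$ and composing with the corresponding expansion of $f_\alpha$ at $\infty$ yields $(f_\alpha^{\circ 2})'(0)=\lambda$; the analogous calculation in the chart $\zeta=1/z$ gives the same multiplier at $\infty$. Since $\theta$ is of bounded type and in particular Diophantine, Siegel's linearization theorem provides a Siegel disk $\Delta_\alpha^0$ around $0$ on which $f_\alpha^{\circ 2}$ is conformally conjugate to $R_\theta$, and then $\Delta_\alpha^\infty:=f_\alpha(\Delta_\alpha^0)$ is the corresponding Siegel disk at $\infty$, which completes the $2$-cycle.

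The main obstacle is not a deep one; it amounts to careful bookkeeping of the successive M\"obius normalizations and checking that each case excluded along the way (superattracting cycles, loss of degree through non-coprimality, vanishing of $\alpha$) is genuinely incompatible either with the Siegel hypothesis on $f$ or with the irrationality of $\theta$. The actual content is the short power-series calculation that yields $(g^{\circ 2})'(0)=b'/c'$ in the two-parameter family with denominator $z+z^2$, pinning down the ratio of coefficients in the numerator to be $\lambda$.
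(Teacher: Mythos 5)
Your argument is correct and follows essentially the same route as the paper: M\"obius-conjugate the $2$-cycle of Siegel points to $\{0,\infty\}$, use the remaining dilation to put the second pole at $-1$, and read off the multiplier of the $2$-cycle from the leading Laurent coefficients at $0$ and $\infty$ (the paper streamlines the normalization, but the degenerate cases you exclude --- a double pole at $0$ forcing a superattracting cycle, non-coprimality dropping the degree --- are the same ones it implicitly rules out, and your explicit appeal to Siegel linearization for bounded-type $\theta$ is also left tacit there). One small arithmetic slip: to turn the denominator $z(z-a)$ into $z(z+1)$ under conjugation by $\phi(z)=\mu z$, the correct choice is $\mu=-1/a$, not $\mu=-a$, since $\phi\circ g\circ\phi^{-1}$ has denominator proportional to $z(z-a\mu)$; this does not affect the validity of the argument.
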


\begin{proof}
Up to a conformal conjugacy, we assume that $f$ has a $2$-cycle of Siegel points $\{0,\infty\}$ of rotation number $\theta$ and that $-1$ is the other pole of $f$. Then $f$ can be written as
\begin{equation}
f(z)=a\,\frac{1+b z}{z+z^2},
\end{equation}
for some $a,b\in\C\setminus\{0\}$ and $b\neq 1$. Since $f(z)\sim\frac{a}{z}$ near $0$ and $f(z)\sim\frac{ab}{z}$ near $\infty$, we have
$(f\circ f)'(0)=b=e^{2\pi\ii\theta}$.
This implies that $f$ is conformally conjugate to $f_\alpha\in\Sigma_\theta$ for $\alpha=a$.

For the second statement, we have $f_\alpha(z)\sim\frac{\alpha}{z}$ near $0$ and $f(z)\sim\frac{\alpha e^{2\pi\ii\theta}}{z}$ near $\infty$ for any $\alpha\in\C\setminus\{0\}$. Hence
$(f_\alpha^{\circ 2})'(0)=e^{2\pi\ii\theta}$ and $f_\alpha$ has a $2$-cycle of Siegel disks with rotation number $\theta$.
\end{proof}

\subsection{The location of the critical points}

The main aim in this subsection is to prove that each of $\partial\Delta_\alpha^0$ and $\partial\Delta_\alpha^\infty$ cannot contain two critical points.

\begin{lem}\label{atleast1}
For any $f_\alpha\in\Sigma_\theta$, $\partial{\Delta_{\alpha}^0} \cup \partial{\Delta_{\alpha}^{\infty}}$ contains at least one critical point $c_1$ or $c_2$.
\end{lem}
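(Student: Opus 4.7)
The plan is to apply the classical ``critical point on the boundary'' theorem for bounded type Siegel disks to the second iterate $g:=f_\alpha^{\circ 2}$, and then transfer the conclusion back to $f_\alpha$ via the chain rule.

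First, I would observe that both $\Delta_\alpha^0$ and $\Delta_\alpha^\infty$ are fixed Siegel disks of $g$ with rotation number $\theta$, which is of bounded type. By the theorem of Graczyk--\'Swi\k{a}tek \cite{GS03} (see also \cite{Zha11}), the boundary of a fixed Siegel disk of a rational map with bounded type rotation number contains at least one critical point of that rational map. Applied to $g$ and $\Delta_\alpha^0$, this furnishes a point $z_0\in\partial\Delta_\alpha^0$ with $g'(z_0)=0$.

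Next I would invoke the chain rule $g'(z_0)=f_\alpha'(f_\alpha(z_0))\cdot f_\alpha'(z_0)=0$, so either $z_0\in\{c_1,c_2\}$ or $f_\alpha(z_0)\in\{c_1,c_2\}$. In the former case one of the two critical points lies on $\partial\Delta_\alpha^0$ and we are done. In the latter case I would use that $\partial\Delta_\alpha^0$ is a quasicircle (again by \cite{Zha11}), so $f_\alpha$ extends continuously to $\overline{\Delta_\alpha^0}$ and sends it to $\overline{\Delta_\alpha^\infty}$, with the Siegel-disk boundary mapped into the Siegel-disk boundary; hence $f_\alpha(z_0)\in\partial\Delta_\alpha^\infty$ and one of $c_1,c_2$ lies on $\partial\Delta_\alpha^\infty$. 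Either way, $\partial\Delta_\alpha^0\cup\partial\Delta_\alpha^\infty$ contains a critical point of $f_\alpha$.

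The only nontrivial ingredient is the Graczyk--\'Swi\k{a}tek / Zhang theorem, which I would cite as a black box rather than reprove. The conceptually important point to flag in the write-up is that this theorem produces a critical point of $g$, not of $f_\alpha$: the extra factor $f_\alpha'(f_\alpha(z_0))$ in the chain rule, together with the forward invariance of the $2$-cycle $\{\Delta_\alpha^0,\Delta_\alpha^\infty\}$ and the continuous boundary extension guaranteed by the quasicircle property, is precisely what lets us trade a critical point of $g$ on $\partial\Delta_\alpha^0$ for a critical point of $f_\alpha$ somewhere on $\partial\Delta_\alpha^0\cup\partial\Delta_\alpha^\infty$. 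No real obstacle is expected beyond invoking these two cited inputs correctly.
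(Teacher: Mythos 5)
Your proof is correct and follows essentially the same route as the paper: apply Theorem~\ref{zhang11} to the second iterate $f_\alpha^{\circ 2}$, whose Siegel disks $\Delta_\alpha^0,\Delta_\alpha^\infty$ are fixed, then use the chain rule together with $f_\alpha(\partial\Delta_\alpha^0)=\partial\Delta_\alpha^\infty$ to convert a critical point of $f_\alpha^{\circ 2}$ into a critical point of $f_\alpha$ on $\partial\Delta_\alpha^0\cup\partial\Delta_\alpha^\infty$. One small simplification: the quasicircle property is not needed to justify $f_\alpha(\partial\Delta_\alpha^0)\subset\partial\Delta_\alpha^\infty$, since $f_\alpha$ is a rational map (already continuous on $\EC$) and the Julia set is completely invariant.
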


\begin{proof}
Note that $\Delta_\alpha^0$ and $\Delta_\alpha^\infty$ are fixed Siegel disks of $f_\alpha^{\circ 2}$ and the rotation numbers are both $\theta$, which is of bounded type.  The set of critical points of $f_\alpha^{\circ 2}$ is
\begin{equation}\label{equ:crit-f2}
\Crit(f_\alpha^{\circ 2})=\{c_1,c_2\}\cup f_\alpha^{-1}(c_1)\cup f_\alpha^{-1}(c_2).
\end{equation}
According to \cite{GS03} or \cite{Zha11}, each of $\partial\Delta_{\alpha}^0$ and $\partial{\Delta_{\alpha}^{\infty}}$ must contain a point in $\Crit(f_\alpha^{\circ 2})$. In particular, this implies that $(\partial\Delta_{\alpha}^0\cup\partial\Delta_{\alpha}^\infty) \cap \{c_1,c_2\}\neq\emptyset$.
\end{proof}

The following result is important for us to locate the critical points and to prove the continuous dependence of $\partial\Delta_{\alpha}^0\cup\partial\Delta_{\alpha}^\infty$ on $\alpha$. See \cite{Zha11} for a proof.

\begin{thm}\label{zhang11}
Let $d\ge 2$ be an integer and $0<\theta<1$ be an irrational number of bounded type. Then there exists a constant $1<K(d,\theta)<\infty$ depending only on $d$ and $\theta$ such that for any rational map $f$ of degree $d$, if $f$ has a fixed Siegel disk with rotation number $\theta$, then the boundary of the Siegel disk is a $K(d,\theta)$-quasicircle which passes through at least one critical point of $f$.
\end{thm}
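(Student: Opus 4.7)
The plan is to prove Theorem \ref{zhang11} by a quasiconformal surgery that compares $f$ to a Blaschke product model whose dynamics on the unit circle is known. The broad strategy is well-established for Siegel disks of bounded type: one builds a model with an invariant quasicircle containing a critical point, then transfers this structure back to $f$ through a uniformly quasiconformal conjugacy.

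First I would set up the Blaschke product model. For the given degree $d$ and rotation number $\theta$, I would construct a degree $d$ Blaschke product $B:\EC\to\EC$ with the following properties: (i) $B$ preserves $\partial\D$, (ii) $B|_{\partial\D}$ is a real-analytic circle homeomorphism of rotation number $\theta$, (iii) $B$ has a critical point on $\partial\D$, and (iv) the combinatorial type of $B$ is determined by $d$. The standard construction normalizes $B$ so that $0$ and $\infty$ are superattracting/attracting fixed points with appropriate multiplicities so that $B$ becomes critical on the circle. Since $\theta$ is of bounded type, the theorem of Herman--\'{S}wi\c{a}tek applies to $B|_{\partial\D}$: it is quasisymmetrically conjugate to the rigid rotation $R_\theta$, with a quasisymmetric constant depending only on $d$ and $\sup_n a_n$.

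Next I would carry out the surgery. Let $\Delta$ denote the Siegel disk of $f$ with rotation number $\theta$, and let $\phi:\D\to\Delta$ be the linearization so that $\phi\circ R_\theta=f\circ\phi$. I would replace $f|_\Delta$ by a map manufactured from $B$: using the quasisymmetric conjugacy between $B|_{\partial\D}$ and $R_\theta$, together with Beurling--Ahlfors extension, I build a quasiconformal homeomorphism $\Phi$ from a neighborhood of $\overline{\D}$ to a neighborhood of $\overline{\Delta}$ which agrees with $\phi$ on $\partial\D$ up to the quasisymmetric conjugacy. Gluing $f$ outside $\Delta$ with $\Phi\circ B\circ\Phi^{-1}$ inside $\Delta$ produces a quasiregular map $\widetilde{f}$ of degree $d$, whose Beltrami coefficient is supported on $\overline{\Delta}$ and is invariant under the new dynamics by pulling back along orbits that escape $\Delta$. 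Since $\widetilde{f}$ is hybrid equivalent to $B$ on one side and to $f$ on the other, the Measurable Riemann Mapping Theorem yields a quasiconformal homeomorphism $H:\EC\to\EC$ with $H\circ B=f\circ H$ outside $H^{-1}(\Delta)$ and $H$ maps $\partial\D$ onto $\partial\Delta$. Because $H$ is quasiconformal with dilatation controlled by the Beurling--Ahlfors constant, $\partial\Delta=H(\partial\D)$ is a $K(d,\theta)$-quasicircle. Finally, any critical point $c_B\in\partial\D$ of $B$ is carried by $H$ to a critical point of $f$ on $\partial\Delta$, which furnishes the critical point on the Siegel disk boundary.

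The main obstacle, and the heart of Zhang's contribution in \cite{Zha11}, is establishing that the quasiconformal constant $K(d,\theta)$ depends only on $d$ and $\theta$, not on the individual map $f$. Two difficulties drive this: (a) ensuring that the quasisymmetric constant from Herman--\'{S}wi\c{a}tek is uniform in the combinatorial data of $B$, which in turn requires uniform a priori complex bounds on the renormalizations of $B|_{\partial\D}$ depending only on $d$ and $\sup_n a_n$; and (b) ensuring that the Beltrami coefficient of $\widetilde{f}$ stays uniformly bounded away from $1$, which follows if the Beurling--Ahlfors extension is applied to a quasisymmetric homeomorphism of uniform constant. Once these uniformity issues are resolved, the rest of the surgery is standard. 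Verifying that the critical point survives on $\partial\Delta$ (rather than being mapped to an interior or exterior critical point) amounts to checking that the $B$-orbit of $c_B$ is contained in $\overline{\D}$ and that $H$ preserves $\partial\D$ pointwise on $\partial\D$, both of which are built into the construction.
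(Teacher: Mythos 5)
Theorem~\ref{zhang11} is quoted without proof in the paper; the authors simply refer to \cite{Zha11}, so there is no in-text argument to compare against. Your sketch invokes the right circle of ideas (Blaschke product models, Herman--\'{S}wi\c{a}tek, Beurling--Ahlfors, quasiconformal surgery), but as set up the surgery is circular. You propose to glue the Blaschke model to $f$ along $\partial\Delta$ by building a quasiconformal homeomorphism $\Phi$ from a neighborhood of $\overline{\D}$ to a neighborhood of $\overline{\Delta}$ that agrees with $\phi$ on $\partial\D$ up to the quasisymmetric conjugacy. But a priori the linearization $\phi:\D\to\Delta$ need not extend even continuously to $\partial\D$, let alone quasisymmetrically: that $\partial\Delta$ is a Jordan curve (indeed a quasicircle) is precisely the content of the theorem. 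Without it, $\overline{\Delta}$ could be bounded by a non-locally-connected continuum and no such $\Phi$ exists, so the quasiregular map $\widetilde{f}$ cannot be assembled and the argument never starts.

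A second gap concerns the direction of the surgery. The Ghys--Douady--Shishikura construction is run the other way: one starts with a Blaschke product $B$, replaces $B|_{\D}$ by a rotation via the Herman--\'{S}wi\c{a}tek conjugacy and a Beurling--Ahlfors extension, and straightens with the Measurable Riemann Mapping Theorem. This produces \emph{one} rational map $g$ whose Siegel disk boundary is a quasicircle; it says nothing about an arbitrary degree-$d$ rational map $f$ carrying a Siegel disk of rotation number $\theta$. To reach the stated theorem you would also need a rigidity statement identifying $f$ quasiconformally with a surgery model, and that step is absent from your sketch. Zhang's argument in \cite{Zha11} avoids both obstacles by establishing a priori complex bounds that control the geometry of the dynamics near $\partial\Delta$ before any topological regularity of $\partial\Delta$ is known, and then extracting the quasiconformal extension of the linearizer from that control; the uniformity of $K(d,\theta)$, which you single out as the heart of the difficulty, is a byproduct of these bounds rather than the central obstacle.
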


As an immediate consequence, we have

\begin{lem}\label{disjoint}
There exists $K=K(\theta)>1$ such that for any $f_\alpha\in\Sigma_\theta$, $\partial\Delta_{\alpha}^0$, $\partial\Delta_{\alpha}^\infty$ are $K$-quasicircles and $\overline{\Delta_\alpha^{0}}\cap \overline{\Delta_\alpha^{\infty}}=\emptyset$.
\end{lem}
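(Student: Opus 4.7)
My plan is to obtain the quasicircle statement as a direct application of Theorem~\ref{zhang11} to the iterate $f_\alpha^{\circ 2}$, and then to deduce the disjointness of the closures by combining the density of the irrational rotation orbits on each boundary with a global degree count.

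The quasicircle part I would treat first. For each $\alpha$, the map $f_\alpha^{\circ 2}$ is a rational map of degree $4$ having $\Delta_\alpha^0$ and $\Delta_\alpha^\infty$ as fixed Siegel disks of rotation number $\theta$. Applying Theorem~\ref{zhang11} with $d=4$ yields a single constant $K:=K(4,\theta)$, independent of $\alpha$, so that both $\partial\Delta_\alpha^0$ and $\partial\Delta_\alpha^\infty$ are $K$-quasicircles. Since each boundary is thus a Jordan curve, the Siegel linearization of $f_\alpha^{\circ 2}$ on $\Delta_\alpha^0$ (resp. $\Delta_\alpha^\infty$) extends via Carath\'eodory to a homeomorphism of the closure onto the closed unit disk, so $f_\alpha^{\circ 2}$ acts on each boundary as the irrational rotation by $\theta$; in particular every orbit on $\partial\Delta_\alpha^0$ or $\partial\Delta_\alpha^\infty$ is dense in that boundary.

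For the disjointness of the closures, I would argue by contradiction. Assuming $\overline{\Delta_\alpha^0} \cap \overline{\Delta_\alpha^\infty} \neq \emptyset$, any common point must lie in $\partial\Delta_\alpha^0 \cap \partial\Delta_\alpha^\infty$; pick such a point $z_0$. Its forward $f_\alpha^{\circ 2}$-orbit is contained in the closed set $\partial\Delta_\alpha^0 \cap \partial\Delta_\alpha^\infty$ (each boundary being forward invariant), and by the previous paragraph this orbit is dense in $\partial\Delta_\alpha^0$ as well as in $\partial\Delta_\alpha^\infty$. Taking closures forces $\partial\Delta_\alpha^0 = \partial\Delta_\alpha^\infty$; denote this common Jordan curve by $\gamma$.

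To finish, I would contradict $\partial\Delta_\alpha^0 = \partial\Delta_\alpha^\infty = \gamma$ by a degree count. The curve $\gamma$ separates $\EC$ into exactly two components, which must be $\Delta_\alpha^0$ and $\Delta_\alpha^\infty$, so $\EC = \Delta_\alpha^0 \sqcup \gamma \sqcup \Delta_\alpha^\infty$. Since $f_\alpha^{\circ 2}$ is conjugate to an irrational rotation on each Siegel disk, it has no critical points there, so both $f_\alpha|_{\Delta_\alpha^0}:\Delta_\alpha^0 \to \Delta_\alpha^\infty$ and $f_\alpha|_{\Delta_\alpha^\infty}:\Delta_\alpha^\infty \to \Delta_\alpha^0$ must be conformal isomorphisms. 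Combined with $f_\alpha(\gamma)=\gamma$, a generic $q \in \Delta_\alpha^0$ then has exactly one $f_\alpha$-preimage (in $\Delta_\alpha^\infty$), giving $\deg f_\alpha = 1$ and contradicting $\deg f_\alpha = 2$. The step I expect to require the most care is the boundary density: it rests on the continuous extension of the Siegel conjugacy across the quasicircle, and while this is standard for bounded-type Siegel disks, it is essential to invoke it explicitly in order to turn the single orbit of $z_0$ into a dense subset of each boundary.
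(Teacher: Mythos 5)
Your proof follows the paper's structure exactly through the quasicircle assertion (apply Theorem~\ref{zhang11} to the degree-$4$ map $f_\alpha^{\circ 2}$, giving a uniform $K$) and through the orbit-density step: if $z_0\in\partial\Delta_\alpha^0\cap\partial\Delta_\alpha^\infty$, the $f_\alpha^{\circ 2}$-orbit of $z_0$ lies in this invariant intersection and is dense in each boundary, forcing $\partial\Delta_\alpha^0=\partial\Delta_\alpha^\infty=:\gamma$. The two arguments diverge only at the final contradiction. The paper concludes tersely that ``the Julia set of $f_\alpha^{\circ 2}$ is empty,'' which as literally written is not what one obtains (one gets $J(f_\alpha^{\circ 2})=\gamma$); the intended impossibility is left implicit. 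You instead observe that $\gamma$ being a Jordan curve forces $\EC=\Delta_\alpha^0\sqcup\gamma\sqcup\Delta_\alpha^\infty$, that both $f_\alpha|_{\Delta_\alpha^0}:\Delta_\alpha^0\to\Delta_\alpha^\infty$ and $f_\alpha|_{\Delta_\alpha^\infty}:\Delta_\alpha^\infty\to\Delta_\alpha^0$ are conformal isomorphisms (injectivity being inherited from $f_\alpha^{\circ 2}$), and that $\gamma$ is completely invariant, so a generic point of $\Delta_\alpha^0$ has a single $f_\alpha$-preimage — giving $\deg f_\alpha=1$, against $\deg f_\alpha=2$. This degree count is a more explicit and self-contained way to close the argument than the paper's one-line appeal; both routes are legitimate, but yours spells out exactly why the shared-boundary configuration is impossible. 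One small point worth recording in the write-up: the identification $\Delta_\alpha^0\sqcup\Delta_\alpha^\infty=\EC\setminus\gamma$ uses that an open connected set whose full boundary is $\gamma$ must coincide with a complementary Jordan component, and the complete invariance $f_\alpha^{-1}(\gamma)=\gamma$ (needed for the preimage count) follows since $f_\alpha$ carries each Siegel disk into the other.
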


\begin{proof}
By Theorem \ref{zhang11}, $\partial\Delta_{\alpha}^0$ and $\partial\Delta_{\alpha}^\infty$ are $K$-quasicircles (where $K>1$ is independent of $\alpha$) since they are the boundaries of two fixed bounded type Siegel disks of $f_\alpha^{\circ 2}$. The map $f^{\circ 2}_{\alpha}$ is topologically conjugate to the irrational rotation $R_\theta(\zeta)=e^{2\pi\ii\theta}\zeta$ on $\overline{\Delta_\alpha^{0}}$ and $\overline{\Delta_\alpha^{\infty}}$. Assume that $\partial\Delta_\alpha^{0}\cap \partial\Delta_\alpha^{\infty}$ contains a point $z_0$. Then the closure of the orbit $\{f^{\circ 2n}_{\alpha}(z_0):n\in\N\}$ is the common boundary of $\overline{\Delta_\alpha^{0}}$ and $\overline{\Delta_\alpha^{\infty}}$. This implies that the Julia set of $f_\alpha^{\circ 2}$ is empty, which is impossible.
\end{proof}

We now prove that any of $\partial\Delta_{\alpha}^0$ and $\partial{\Delta_{\alpha}^{\infty}}$ cannot contain two critical points.

\begin{lem}\label{lem:only-one}
For any $f_{\alpha}\in\Sigma_\theta$, if $\partial{\Delta_{\alpha}^0} \cup \partial{\Delta_{\alpha}^{\infty}}$ contains both of $c_1$ and $c_2$, then one of them is contained in $\partial{\Delta_{\alpha}^0}$ while the other is contained in $\partial{\Delta_{\alpha}^{\infty}}$.
\end{lem}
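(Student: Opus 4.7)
The plan is to argue by contradiction: assume both $c_1, c_2 \in \partial \Delta_\alpha^0$ (the other case being symmetric). The first step is to record the consequences. Since $f_\alpha^{\circ 2}:\Delta_\alpha^0\to\Delta_\alpha^0$ is conformally conjugate to an irrational rotation and hence of topological degree $1$, the factorization $\Delta_\alpha^0\xrightarrow{f_\alpha}\Delta_\alpha^\infty\xrightarrow{f_\alpha}\Delta_\alpha^0$ forces each of $f_\alpha|_{\Delta_\alpha^0}$ and $f_\alpha|_{\Delta_\alpha^\infty}$ to be a conformal isomorphism of degree $1$, which extends by Lemma \ref{disjoint} to a homeomorphism of closures. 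In particular, $\Delta_\alpha^0\cup\Delta_\alpha^\infty$ contains no critical value of $f_\alpha$, and the two critical values $f_\alpha(c_1),f_\alpha(c_2)$ must both sit on $\partial\Delta_\alpha^\infty$. Applying Theorem \ref{zhang11} to $\Delta_\alpha^\infty$ viewed as a fixed Siegel disk of $f_\alpha^{\circ 2}$, together with Lemma \ref{disjoint}, $\partial\Delta_\alpha^\infty$ must also carry at least one ``secondary'' critical point of $f_\alpha^{\circ 2}$, namely a point in $f_\alpha^{-1}(\{c_1,c_2\})\setminus\{c_1,c_2\}$.

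Next I would bring Thurston's combinatorial theory into play via a Shishikura-type quasiconformal surgery on the $2$-cycle of Siegel disks: modify $f_\alpha$ on a small neighborhood of $\overline{\Delta_\alpha^0\cup\Delta_\alpha^\infty}$ (enlarged enough to absorb the critical points into the interior of the modified Fatou component) so that the rotational dynamics are replaced by super-attracting dynamics, for example by conjugating the action on each disk to a suitable Blaschke product and capturing the $c_i$ onto the super-attracting $2$-cycle. The resulting map $F: S^2 \to S^2$ is a topological branched covering of degree $2$ that coincides with $f_\alpha$ off the surgery region, is post-critically finite and hyperbolic, and is Thurston equivalent to an honest rational map via straightening of the invariant almost-complex structure produced by the surgery.

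The core of the proof is then to exhibit a Thurston obstruction of $F$ forced by the concentration of both critical points on the single boundary $\partial\Delta_\alpha^0$. The expected obstruction is a Levy cycle: an essential simple closed curve $\gamma\subset S^2\setminus P_F$ separating the two super-attracting components, together with a component $\tilde\gamma$ of $F^{-1}(\gamma)$ isotopic to $\gamma$ rel $P_F$ on which $F$ has degree $1$. The existence of the univalent branch follows from the fact that $f_\alpha|_{\Delta_\alpha^\infty}$ is conformal and that both critical values sit on the single boundary circle $\partial\Delta_\alpha^\infty$, so the preimage of $\gamma$ splits into two simple closed curves, one of which wraps $\overline{\Delta_\alpha^0}$ via a degree-$1$ branch of $F$. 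Such a Levy cycle contradicts Thurston's rigidity theorem in the Berstein--Levy form for degree $2$, since $F$ is realized by a rational map through the surgery; this contradiction rules out the configuration.

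The main obstacle will be the explicit verification of the Levy cycle. This requires a careful topological analysis of how the preimages of the two Siegel disks under $f_\alpha$ fit together in $S^2$---crucially, how the ``extra'' preimage component $U\subset f_\alpha^{-1}(\Delta_\alpha^\infty)$ attaches to $\partial\Delta_\alpha^0$ exactly at the two critical points $\{c_1,c_2\}$---and of how the branches of $F^{-1}$ act on isotopy classes of essential simple closed curves relative to $P_F$. The delicate step is ruling out that the isotopy class of $\gamma$ is destroyed by peripheral identifications or by unexpected degree inflation in the relevant branch, so that the degree-$1$ preimage component persists as a genuine Thurston obstruction.
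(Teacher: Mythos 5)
Your opening observations are correct and match the paper's: under the hypothesis $c_1,c_2\in\partial\Delta_\alpha^0$, the map $f_\alpha\colon\Delta_\alpha^0\to\Delta_\alpha^\infty$ is conformal, both critical values lie on $\partial\Delta_\alpha^\infty$, and $f_\alpha^{-1}(\Delta_\alpha^\infty)$ has a second component $V_\alpha^0$ attached to $\Delta_\alpha^0$ exactly at $c_1,c_2$. However, the surgery step that underlies the rest of your argument cannot be carried out, and this is a genuine gap. You propose to enlarge $\Delta_\alpha^0$ to a Jordan disk $\tilde\Delta^0$ with $c_1,c_2$ in its interior, and then replace the dynamics by a proper map $F\colon\tilde\Delta^0\to\tilde\Delta^\infty$ agreeing with $f_\alpha$ off the surgery region. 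But a proper holomorphic (or branched covering) map from a disk to a disk with two simple critical points in the interior must have degree $3$ by Riemann--Hurwitz ($\chi(\tilde\Delta^0)=d\cdot\chi(\tilde\Delta^\infty)-2$ gives $d=3$), while $F$ is required to have global degree $2$. If instead you enlarge $\tilde\Delta^0$ far enough to swallow $V_\alpha^0$, then $f_\alpha(\tilde\Delta^0)$ already contains the pocket $U_\alpha^0$'s image, which is essentially all of $\EC\setminus\overline{\Delta_\alpha^\infty}$, so the restriction is not a proper map to a disk at all. No choice of $\tilde\Delta^0$ produces a degree-$2$ model, so the surgered branched covering $F$ does not exist.

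There is also a structural problem even if one imagines the surgery patched up somehow: a bona fide quasiconformal Siegel-to-attracting surgery automatically yields an integrable invariant almost-complex structure and hence a rational realization of $F$; by Thurston rigidity such an $F$ can have no Levy cycle. So the two halves of your strategy (``surgery realizes $F$'' and ``$F$ has a Levy cycle'') are mutually exclusive for any valid surgery. Relatedly, collapsing the dynamics on the Siegel cycle to a finite post-critical set destroys precisely the feature that makes the obstruction visible: the paper's curve $\gamma_1$ is essential and nonperipheral \emph{because} the post-critical set of $f_\alpha$ is the union of the two Jordan curves $\partial\Delta_\alpha^0\cup\partial\Delta_\alpha^\infty$, which trap $\gamma_1$ in the annulus between them. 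After passing to a PCF model with a handful of marked points, nothing forces the would-be Levy curve to remain essential. The paper avoids all of this by working directly with the non-postcritically-finite rational map $f_\alpha$: it takes a curve $\gamma_1$ in the annulus $A_\alpha$ between $\partial\Delta_\alpha^\infty$ and $\partial(\overline{\Delta_\alpha^0}\cup\overline{V_\alpha^0})$, uses the conformal isomorphism $f_\alpha\colon A_\alpha\to\EC\setminus(\overline{\Delta_\alpha^0}\cup\overline{\Delta_\alpha^\infty})$ to produce a degree-$1$ preimage $\gamma_2\subset A_\alpha$ isotopic to $\gamma_1$ rel $P(f_\alpha)$, and then invokes McMullen's extension of Thurston's eigenvalue bound to general rational maps with infinite post-critical set ([McM94b, Theorem~B.4]) to rule out a transition matrix with leading eigenvalue $1$. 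Replacing your surgery plus Levy-cycle argument with this direct use of McMullen's theorem would close the gap.
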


\begin{proof}
Suppose that $\partial\Delta_\alpha^0$ contains two critical points $c_1$ and $c_2$ (the proof for $\partial\Delta_\alpha^\infty$ is similar).
Note that $\deg(f_\alpha)=2$, $c_1\neq c_2$ and $f_{\alpha}: \Delta_{\alpha}^{0} \to \Delta_{\alpha}^{\infty}$ is conformal. It implies that $f^{-1}_{\alpha}(\Delta_{\alpha}^{\infty})$ consists of exactly two distinct components $\Delta_{\alpha}^{0}$ and $V_\alpha^0$, where $V_\alpha^0$ is a simply connected Fatou component attached to $\Delta_\alpha^0$ at $c_1$ and $c_2$ since $f_\alpha$ is locally two-to-one at $c_1$ and $c_2$. By Lemma \ref{disjoint}, $V_\alpha^0$ is a Jordan domain.

We claim that $\overline{V_\alpha^0}\cap \overline{\Delta_{\alpha}^\infty}=\emptyset$ and $\overline{V_\alpha^0}\cap \overline{\Delta_{\alpha}^0}=\{c_1,c_2\}$.
Indeed, if $\partial{V_\alpha^0}\cap \partial{\Delta_{\alpha}^\infty}$ contains a point $z_0$, then $f_\alpha(z_0)\in \partial\Delta_\alpha^{0}\cap \partial\Delta_\alpha^{\infty}$, which contracts Lemma \ref{disjoint}.
Any $z_1\in \partial{V_\alpha^0}\cap \partial{\Delta_{\alpha}^0}$ is a critical point of $f_\alpha$ since $f_\alpha(\Delta_\alpha^0)=f_\alpha(V_\alpha^0)=\Delta_\alpha^\infty$. Hence $\overline{V_\alpha^0}\cap \overline{\Delta_{\alpha}^0}=\{c_1,c_2\}$.
Since the critical values $f_\alpha(c_1)$ and $f_\alpha(c_2)$ are contained in $\partial\Delta_\alpha^\infty$, it implies that $f_\alpha^{-1}(\EC\setminus\overline{\Delta_\alpha^\infty})$ consists of two Jordan domains $U_\alpha^0$ and $U_\alpha^\infty$, where $U_\alpha^\infty$ is the disjoint union of $\overline{\Delta_\alpha^\infty}$ and the annulus $A_\alpha$. See Figure \ref{case2}.

\begin{figure}[!htpb]
 \setlength{\unitlength}{1mm}
  \centering
  \includegraphics[width=0.5\textwidth]{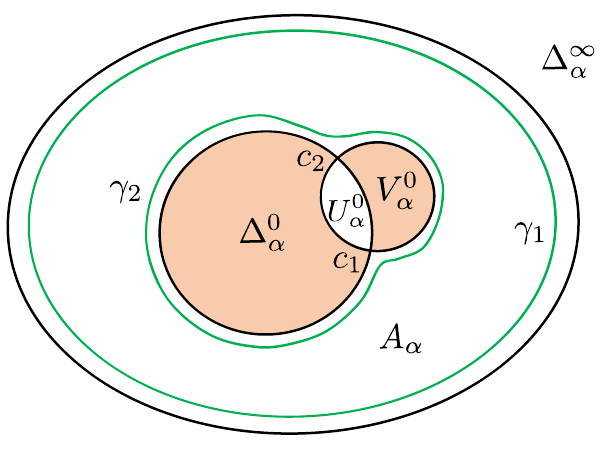}
  \caption{A candidate of the location of the critical points and some components are marked. We shall prove that this configuration is impossible for any $f_\alpha\in\Sigma_\theta$.}
  \label{case2}
\end{figure}

Let $\gamma_1$ be a Jordan curve in $A_\alpha$ which is sufficiently close to $\partial\Delta_\alpha^\infty$ and separates $\partial\Delta_\alpha^\infty$ from $\partial\Delta_\alpha^0$.
Since the post-critical set of $f_\alpha$ is $P(f_\alpha)=\partial\Delta_\alpha^0\cup\partial\Delta_\alpha^\infty$, this implies that $\{\gamma_1\}$ forms a curve system in $\EC\setminus P(f_\alpha)$, which is essential and nonperipheral (see \cite[\S B.2]{McM94b}). Note that $f_\alpha:A_\alpha\to\EC\setminus(\overline{\Delta_\alpha^0}\cup\overline{\Delta_\alpha^\infty})$ is a conformal isomorphism. Then $f_\alpha^{-1}(\gamma_1)$ has a connected component $\gamma_2\subset A_\alpha$, which is a Jordan curve lying close to the boundary of $\overline{\Delta_\alpha^0}\cup \overline{U_\alpha^0}\cup \overline{V_\alpha^0}$ and separating $\partial\Delta_\alpha^\infty$ from $\partial\Delta_\alpha^0$.
Since $\gamma_2$ is isotopic to $\gamma_1$ relative to $P(f_\alpha)$, then $\{\gamma_1\}$ determines a transition matrix $A(\gamma_1)$ which is the unit matrix. This implies that the maximal eigenvalue of $A(\gamma_1)$ is $1$. According to \cite[Theorem B.4]{McM94b}, this is impossible.
Therefore, any of $\partial\Delta_{\alpha}^0$ and $\partial{\Delta_{\alpha}^{\infty}}$ cannot contain two critical points.
\end{proof}

\subsection{The Siegel disks move continuously}\label{subsec:continuity}

Let $\D$ be the unit disk. For any $\alpha\in\Sigma_\theta$, there exist two conformal maps
\begin{equation}\label{equ:h-0-infty}
h_{\alpha}^0: \D \to \Delta_{\alpha}^0  \text{\quad and\quad} h_{\alpha}^\infty: \D \to \Delta_{\alpha}^\infty
\end{equation}
such that
\begin{equation}\label{equ:h-condition}
\begin{split}
  (h_{\alpha}^0)^{-1} \circ f^{\circ 2}_{\alpha} \circ h_{\alpha}^0(\zeta) =R_\theta(\zeta),& ~\forall\, \zeta\in \D, \text{ where } h_{\alpha}^0(0)=0,~(h_{\alpha}^0)'(0)>0 \\
  (h_{\alpha}^\infty)^{-1} \circ f^{\circ 2}_{\alpha} \circ h_{\alpha}^\infty(\zeta) =R_\theta(\zeta),&~\forall\, \zeta\in \D, \text{ where }
  h_{\alpha}^\infty(0)=\infty,~(1/h_{\alpha}^\infty)'(0)>0.
\end{split}
\end{equation}
Note that such $h_\alpha^0$ and $h_\alpha^\infty$ are unique.

\begin{prop}\label{continuousity}
Let $\alpha_0\in\Sigma_\theta$ and $(\alpha_n)_{n\geq 1}$ be a sequence in $\Sigma_\theta$ satisfying $\alpha_n \to \alpha_0$ as $n\to\infty$. Then $\partial{\Delta_{\alpha_{n}}^0} \to \partial{\Delta_{\alpha_0}^0}$ and $ \overline{\Delta_{\alpha_{n}}^{0}} \to  \overline{\Delta_{\alpha_0}^{0}}$; meanwhile $\partial{\Delta_{\alpha_{n}}^{\infty}} \to \partial{\Delta_{\alpha_0}^{\infty}}$ and $ \overline{\Delta_{\alpha_n}^{\infty}} \to  \overline{\Delta_{\alpha_0}^{\infty}}$ as $n\to\infty$, with respect to the Hausdorff metric.
\end{prop}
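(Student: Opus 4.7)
The plan is to extract a subsequential Hausdorff limit of $\overline{\Delta_{\alpha_n}^0}$ (and of $\overline{\Delta_{\alpha_n}^\infty}$) and identify it with $\overline{\Delta_{\alpha_0}^0}$ (respectively $\overline{\Delta_{\alpha_0}^\infty}$). The three main inputs are: the uniform $K$-quasicircle bound from Lemma \ref{disjoint}; the normalized uniformizers $h_{\alpha_n}^0, h_{\alpha_n}^\infty$ from \eqref{equ:h-0-infty}; and uniqueness of the Siegel disk attached to a prescribed Siegel point. Since the argument for $\Delta_{\alpha_n}^0$ and for $\Delta_{\alpha_n}^\infty$ is the same (after conjugating by $z \mapsto 1/z$), I will describe only the $0$ case.

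First I would show that the conformal radius $r_n := (h_{\alpha_n}^0)'(0) > 0$ stays in a compact interval of $(0,\infty)$. The upper bound is immediate from Koebe's quarter theorem, since the critical points $c_1,c_2 \in \C \setminus \{0\}$ are independent of $\alpha$ and $\Delta_{\alpha_n}^0$ contains no critical point of $f_{\alpha_n}$, giving $\textup{dist}(0,\partial\Delta_{\alpha_n}^0) \leq \min(|c_1|,|c_2|)$. For the lower bound, I would use that $\alpha_n$ stays in a compact subset of $\C\setminus\{0\}$, so the four critical points of $f_{\alpha_n}^{\circ 2}$ listed in \eqref{equ:crit-f2} remain in a compact subset $S \subset \C \setminus \{0\}$ bounded away from $0$. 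By Theorem \ref{zhang11} applied to the fixed Siegel disk $\Delta_{\alpha_n}^0$ of $f_{\alpha_n}^{\circ 2}$, the boundary $\partial\Delta_{\alpha_n}^0$ is a $K$-quasicircle passing through a point of $S$, and the Ahlfors three-point / bounded turning property of $K$-quasicircles then forces the inradius of $\Delta_{\alpha_n}^0$ at $0$ to be comparable to $\textup{dist}(0,S) > 0$, yielding a uniform lower bound on $r_n$.

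Next, using the bound $\overline{\Delta_{\alpha_n}^0} \cap \overline{\Delta_{\alpha_n}^\infty} = \emptyset$ together with a chosen round neighborhood of $\infty$ contained in every $\Delta_{\alpha_n}^\infty$ (available for $n$ large, by the same argument applied at $\infty$), the family $\{h_{\alpha_n}^0\}$ omits a fixed disk around $\infty$ and is therefore normal on $\D$. Extract a subsequential limit $h_{\alpha_n}^0 \to H^0$ uniformly on compact subsets of $\D$. The positive lower bound on $r_n$ together with Hurwitz's theorem makes $H^0$ univalent with $H^0(0)=0$ and $(H^0)'(0) > 0$. Passing to the limit in the conjugacy $(h_{\alpha_n}^0)^{-1}\circ f_{\alpha_n}^{\circ 2}\circ h_{\alpha_n}^0 = R_\theta$, and using that $f_{\alpha_n}^{\circ 2} \to f_{\alpha_0}^{\circ 2}$ uniformly on compact subsets of $\hat\C$ disjoint from the poles, I deduce that $H^0(\D)$ is an $f_{\alpha_0}^{\circ 2}$-invariant disk on which the dynamics conjugates to $R_\theta$. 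Hence $H^0(\D) \subseteq \Delta_{\alpha_0}^0$, and applying the Schwarz lemma to $(h_{\alpha_0}^0)^{-1}\circ H^0 \colon \D \to \D$ (which fixes $0$ with positive real derivative) gives equality $H^0 = h_{\alpha_0}^0$. Since the subsequential limit is unique, the full sequence $h_{\alpha_n}^0$ converges to $h_{\alpha_0}^0$.

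Finally, to upgrade convergence on $\D$ to Hausdorff convergence of closures and boundaries, I would use the uniform $K$-quasicircle bound from Lemma \ref{disjoint} to extend each $h_{\alpha_n}^0$ to a $K$-quasiconformal map $H_{\alpha_n}\colon \hat\C \to \hat\C$ with a fixed three-point normalization (say sending $0,1,\infty$ to $0, h_{\alpha_n}^0(1), \infty$). Compactness of normalized $K$-quasiconformal self-maps of $\hat\C$ then gives uniform convergence of $H_{\alpha_n}$ on all of $\hat\C$, which forces uniform convergence of $h_{\alpha_n}^0$ on $\overline{\D}$, and hence $\overline{\Delta_{\alpha_n}^0}\to\overline{\Delta_{\alpha_0}^0}$ and $\partial\Delta_{\alpha_n}^0\to\partial\Delta_{\alpha_0}^0$ in the Hausdorff metric. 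The hard step is the lower bound on the conformal radius in Step 1: upper semi-continuity of the conformal radius is general, but lower semi-continuity (i.e., ruling out a sudden collapse of the Siegel disk in the limit) is the delicate point and is exactly what the bounded type hypothesis buys us through Theorem \ref{zhang11}.
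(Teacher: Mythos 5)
Your overall strategy---extract a normal-family limit of the uniformizers $h_{\alpha_n}^0$, identify the limit with $h_{\alpha_0}^0$ via uniqueness of the normalized linearization, and then upgrade to $\overline{\D}$ via a uniform quasiconformal extension---is exactly the paper's approach. However, the justification you give for the lower bound on the conformal radius is wrong, and this is the one genuinely delicate point.

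You claim that the Ahlfors three-point / bounded-turning property of $K$-quasicircles forces the inradius of $\Delta_{\alpha_n}^0$ at $0$ to be comparable to $\textup{dist}(0,S)$. This is false even for round circles: $\Gamma_\varepsilon = \{z : |z-(1-\varepsilon)|=1\}$ is a $1$-quasicircle enclosing $0$ and passing through $2-\varepsilon$ (far from $0$), yet $\textup{dist}(0,\Gamma_\varepsilon)=\varepsilon$ is arbitrarily small. Bounded turning constrains arcs between points \emph{on} the curve and says nothing about distances from an arbitrary interior point. What you actually need is a distortion theorem for \emph{quasiconformal maps}, not for quasicircles: since Lemma \ref{disjoint} (via Theorem \ref{zhang11}) lets you extend $h_\alpha^0$ to a $K_0$-quasiconformal homeomorphism of $\EC$ fixing $0$ and $\infty$, a standard distortion bound (the paper cites Lehto, Theorem 2.4; Mori's theorem also works) gives $\max_{z\in\partial\D}|h_\alpha^0(z)|/\min_{z\in\partial\D}|h_\alpha^0(z)|\le c(K_0)$ \emph{uniformly in $\alpha$}. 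The crucial normalization $h_\alpha^0(0)=0$ is what makes this work---it is a property of the map, not of the image curve. Combined with the fact that the boundary passes through a critical point of $f_\alpha^{\circ 2}$ bounded away from $0$, this bound supplies both the upper bound on $\Delta_\alpha^0$ and the lower bound on $\textup{dist}(0,\partial\Delta_\alpha^0)$, hence on $r_n$.

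Two smaller remarks. First, the paper actually sidesteps an explicit lower bound on $r_n$ by renormalizing the quasiconformal extension to fix a third point $z_0$ with $|z_0|$ large; any locally uniform subsequential limit then fixes $0$, $z_0$, $\infty$, so it cannot be constant and is automatically a quasiconformal homeomorphism, hence injective and nondegenerate on $\D$. Your route is equally valid once the distortion theorem replaces the bounded-turning claim, but the three-point trick is cleaner because it turns ``non-collapse'' into a soft normal-families fact. Second, your normality step invoking ``a round neighborhood of $\infty$ contained in every $\Delta_{\alpha_n}^\infty$'' is both circular (it presupposes the lower bound at $\infty$, the same delicate point) and unnecessary: $h_{\alpha_n}^0(\D)$ omits $c_1$, $c_2$, and $\infty$ for every $n$, so Montel already gives normality.
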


\begin{proof}
It suffices to consider the Siegel disk $\Delta_{\alpha}^0$ for $\alpha\in\Sigma_\theta$ since the argument for $\Delta_{\alpha}^{\infty}$ is similar.
By Lemma \ref{disjoint}, there is a constant $K_0>1$ which is independent of $\alpha$, such that $h_{\alpha}^0:\D \to \Delta_{\alpha}^0$ can be extended to a $K_0$-quasiconformal mapping from $\EC$ to itself, which we still denote by $h_{\alpha}^0:\EC\to\EC$, where $h_{\alpha}^0(\infty)=\infty$.

According to \cite[Theorem 2.4, p.\,17]{Leh87}, there exists a constant $c(K_0)>1$ such that for all $\alpha\in\Sigma_\theta$, we have
\begin{equation}\label{equ:Leh}
\frac{\max_{z\in\partial\D}|h_\alpha^0(z)|}{\min_{z\in\partial\D}|h_\alpha^0(z)|}=\frac{\max\{|z|:z\in\partial\Delta_\alpha^0\}}{\min\{|z|:z\in\partial\Delta_\alpha^0\}}\leq c(K_0).
\end{equation}
Note that the two critical points $\{c_1,c_2\} =\{-(1+\sqrt{1-\lambda})^{-1},~-(1-\sqrt{1-\lambda})^{-1}\}$ of $f_\alpha$ cannot be contained in the Siegel disk $\Delta_\alpha^0$, where $\lambda=e^{2\pi\ii\theta}$. Therefore, by \eqref{equ:Leh} the size of $\Delta_\alpha^0$ cannot be arbitrarily large, i.e., there exists a constant $M=M(K_0)>1$ such that $\overline{\Delta_\alpha^0}\subset\{z:|z|<M\}$ for all $\alpha\in\Sigma_\theta$.

Let $z_0\neq\infty$ be a point satisfying $|z_0|>M$. We assume further that the extended quasiconformal mapping $h_\alpha^0:\EC\to\EC$ satisfies $h_\alpha^0(z_0)=z_0$. Moreover, there exists $\widetilde{K}_0>1$ such that $h_\alpha^0:\EC\to\EC$ is $\widetilde{K}_0$-quasiconformal for all $\alpha\in\Sigma_\theta$. Since $h_\alpha^0$ also fixes $0$ and $\infty$, it implies that $\{h_\alpha^0:\EC\to\EC:\alpha\in\Sigma_\theta\}$ is a normal family (see \cite[Theorem 2.1, p.\,14]{Leh87}).

\medskip

Let $\alpha_n\to\alpha_0\in\Sigma_\theta$ as $n\to\infty$. Passing to a subsequence we assume that $h_{\alpha_n}^0$ converges uniformly to a map $h:\EC\to\EC$ as $n\to\infty$. Since every $h_{\alpha_n}^0$ fixes $0$, $z_0$ and $\infty$, it implies that $h$ is not a constant and hence is a quasiconformal mapping defined from $\EC$ onto itself. Note that every $h_{\alpha_n}^0$ is holomorphic in $\D$ and $(h_{\alpha_n}^0)'(0)>0$, it follows that $h|_{\D}$ is also holomorphic and hence conformal, and $h'(0)>0$. Taking the limit from the both sides of the equation $f_{\alpha_n}^{\circ 2}\circ h_{\alpha_n}^0(\zeta)=h_{\alpha_n}^0\circ R_\theta(\zeta)$ as $n\to\infty$, we obtain $f_{\alpha_0}^{\circ 2}\circ h(\zeta)=h\circ R_\theta(\zeta)$, where $\zeta\in\D$.
Note that $(h_{\alpha_0}^0)^{-1}:\Delta_{\alpha_0}^0\to\D$ is the unique conformal map which conjugates $f_{\alpha_0}^{\circ 2}$ to $R_\theta$, where $h_{\alpha_0}^0(0)=0$ and $(h_{\alpha_0}^0)'(0)>0$. It implies that $h|_{\D}=h_{\alpha_0}^0|_{\D}$. By the continuity of $h$ and $h_{\alpha_0}^0$, we have $h|_{\overline{\D}}=h_{\alpha_0}^0|_{\overline{\D}}$.
Therefore, $h_{\alpha_n}^0(\partial{\D}) \to h_{\alpha_0}^0(\partial{\D})$ and $h_{\alpha_n}^0(\overline{\D}) \to h_{\alpha_0}^0(\overline{\D})$ with respect to the Hausdorff metric as $n\to\infty$.

Let $(\alpha_{n_k})_{k\in\N}$ be any other subsequence of $(\alpha_n)_{n\in\N}$ which satisfies $\alpha_{n_k}\to \alpha_0$ and $h_{\alpha_{n_k}}^0\to \widetilde{h}$ as $k\to\infty$. Similar to the arguments above, $\widetilde{h}|_{\D}$ is conformal and satisfies $f_{\alpha_0}^{\circ 2}\circ \widetilde{h}(\zeta)=\widetilde{h}\circ R_\theta(\zeta)$, where $\widetilde{h}(0)=0$ and $\widetilde{h}'(0)>0$. By the uniqueness of the normalized linearization map, we have $\widetilde{h}=h_{\alpha_0}^0$. This finishes the proof.
\end{proof}

\section{Limit dynamics}\label{limitdynamics}

\subsection{Compactification and the symmetry of $\Sigma_\theta$}

In this subsection, we first study the dynamics of $f_\alpha^{\circ 2}$ as $\alpha$ tends to $\infty$ or $0$.
This allows us to give a compactification $\widehat{\Sigma}_\theta$ of the parameter space $\Sigma_\theta$.
Then we study the symmetry of $\widehat{\Sigma}_\theta$ and the continuous dependence of the Siegel disks of the maps in $\widehat{\Sigma}_\theta$.

\begin{lem}\label{0case}
For any $z\in\EC$, we have
\begin{equation}
\lim_{\alpha\to \infty} f_\alpha^{\circ 2}(z)=g_\infty(z):=\frac{e^{2\pi\ii\theta}(z+z^2)}{1+e^{2\pi\ii\theta} z} \text{\quad and\quad}
\lim_{\alpha\to 0} f_\alpha^{\circ 2}(z)=g_0(z):=\frac{z+z^2}{1+e^{2\pi\ii\theta} z},
\end{equation}
where the first convergence is uniform on any compact subset of $\EC\setminus\{\infty,-1/e^{2\pi\ii\theta}\}$, and the second is uniform on any compact subset of $\EC\setminus\{0,-1\}$.
\end{lem}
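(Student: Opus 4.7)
The plan is to reduce everything to a direct computation of $f_\alpha^{\circ 2}(z)$ as an explicit rational function in $z$ with coefficients polynomial in $\alpha$, and then to read off both limits as limits of a continuous family of rational maps.

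Setting $\lambda = e^{2\pi\ii\theta}$ and substituting $w = f_\alpha(z) = \alpha(1+\lambda z)/(z(1+z))$ into $f_\alpha(w)$, clearing denominators gives
\begin{equation*}
f_\alpha^{\circ 2}(z) \;=\; \frac{z(1+z)\bigl[z(1+z) + \lambda\alpha(1+\lambda z)\bigr]}{(1+\lambda z)\bigl[z(1+z) + \alpha(1+\lambda z)\bigr]}.
\end{equation*}
For $\alpha\to 0$, sending $\alpha\to 0$ in this formula and cancelling the common factor $z(1+z)$ yields $g_0(z) = z(1+z)/(1+\lambda z)$. For $\alpha\to\infty$, I would first divide numerator and denominator by $\alpha$, rewriting
\begin{equation*}
f_\alpha^{\circ 2}(z) \;=\; \frac{z(1+z)\bigl[\tfrac{z(1+z)}{\alpha} + \lambda(1+\lambda z)\bigr]}{(1+\lambda z)\bigl[\tfrac{z(1+z)}{\alpha} + (1+\lambda z)\bigr]},
\end{equation*}
and then send $1/\alpha\to 0$ and cancel $(1+\lambda z)$ to obtain $g_\infty(z) = \lambda z(1+z)/(1+\lambda z)$.

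For the uniform convergence, the two displayed formulas exhibit $f_\alpha^{\circ 2}$ as a jointly continuous $\EC$-valued function of $(z,\alpha)$ and of $(z,1/\alpha)$ respectively, in the spherical metric. The exclusions in the statement are precisely the runaway poles present for small or large $\alpha$ but absent from the limit. Indeed, the finite poles of $f_\alpha^{\circ 2}$ come from $1+\lambda z=0$ (the point $z=-1/\lambda$, which is also a pole of both $g_0$ and $g_\infty$) and from the two roots of $z(1+z)+\alpha(1+\lambda z)=0$; a brief discriminant analysis shows that the latter roots tend to $0$ and $-1$ as $\alpha\to 0$, while as $\alpha\to\infty$ one tends to $-1/\lambda$ and the other escapes to $\infty$. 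On a compact subset $K\subset\EC\setminus\{\infty,-1/\lambda\}$ (resp.\ $K\subset\EC\setminus\{0,-1\}$), the factor $z(1+z)+\alpha(1+\lambda z)$ stays bounded away from $0$ uniformly for $|1/\alpha|$ (resp.\ $|\alpha|$) small, and joint continuity of the formula then gives uniform spherical convergence.

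The only mild subtlety I anticipate is that in the $\alpha\to 0$ case the compact set is allowed to contain the permanent pole $z=-1/\lambda$ of both $f_\alpha^{\circ 2}$ and $g_0$; this is exactly why the spherical metric is natural here. One checks directly from the formula that the residues of $f_\alpha^{\circ 2}$ and $g_0$ at $-1/\lambda$ coincide in the limit, equivalently that $1/f_\alpha^{\circ 2}\to 1/g_0$ uniformly in a neighborhood of $-1/\lambda$. Beyond this bookkeeping I foresee no real obstacle; the content of the lemma is just continuity of a family of rational maps in its parameter.
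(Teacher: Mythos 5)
Your proposal is correct and follows essentially the same route as the paper: compute the closed form of $f_\alpha^{\circ 2}$ (your displayed formula agrees with the paper's equation $(3.4)$ after writing $z+z^2 = z(1+z)$), read off the two pointwise limits, and deduce local uniform convergence from the fact that the resulting expression is jointly continuous in $(z,\alpha)$, respectively $(z,1/\alpha)$, once the "runaway" zeros of $z(1+z)+\alpha(1+\lambda z)$ are excluded. Your additional remarks on where those zeros migrate and on treating the common pole $-1/\lambda$ via $1/f_\alpha^{\circ 2}\to 1/g_0$ in the $\alpha\to 0$ case are correct and in fact spell out the "similarly" that the paper leaves implicit; the only small wording slip is that "residues coincide in the limit" is weaker than, rather than equivalent to, the local uniform convergence of $1/f_\alpha^{\circ 2}$ near $-1/\lambda$ that you actually need and that the formula gives.
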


\begin{proof}
Denote $\lambda=e^{2\pi\ii\theta}\neq 1$. By a direct calculation, we have
\begin{equation}\label{equ:1}
f_\alpha^{\circ 2}(z)=\frac{\alpha(1+\lambda f_\alpha(z))}{f_\alpha(z)+(f_\alpha(z))^2}=
\frac{z+z^2}{1+\lambda z}\cdot \frac{z+z^2+\alpha\lambda(1+\lambda z)}{z+z^2+\alpha(1+\lambda z)}.
\end{equation}
Therefore, for any $z\in\EC$, we have $f_\alpha^{\circ 2}(z)\to g_\infty(z)$ as $\alpha\to \infty$ and $f_\alpha^{\circ 2}(z)\to g_0(z)$ as $\alpha\to 0$.
If $z+z^2\neq \infty$ and $1+\lambda z\neq 0$, then the convergence $f_\alpha^{\circ 2}\to g_\infty$ is uniform in a neighborhood of $z$ as $\alpha\to \infty$. In particular, the convergence is uniform on any compact subset of $\EC\setminus\{\infty,-1/\lambda\}$. Similarly, the convergence $f_\alpha^{\circ 2}\to g_0$ is uniform on any compact subset of $\EC\setminus\{0,-1\}$ as $\alpha\to 0$.
\end{proof}

By adding the limit maps to $\Sigma_\theta$ at two singularities $\infty$ and $0$, we obtain a compactification of the parameter space $\Sigma_\theta$:
\begin{equation}
\widehat{\Sigma}_\theta:=\Sigma_\theta\cup\{g_\infty,g_0\}.
\end{equation}
Note that the limit maps can be written as
\begin{equation}
g_\infty(z)=\alpha e^{2\pi\ii\theta}/f_\alpha(z) \text{\quad and\quad} g_0(z)=\alpha/f_\alpha(z),
\end{equation}
and they have the following properties:
\begin{itemize}
\item $g_\infty$ has a fixed Siegel disk $\Delta_\infty^0$ centered at $0$ with rotation number $\theta$, and a fixed parabolic basin $U_\infty$ attaching at the $1$-parabolic fixed point $\infty$;
\item $g_0$ has a fixed Siegel disk $\Delta_0^\infty$ centered at $\infty$ with rotation number $\theta$, and a fixed parabolic basin $U_0$ attaching at the $1$-parabolic fixed point $0$; and
\item The critical points of $g_0$ and $g_\infty$ are both $\{c_1,c_2\}$, which is the same to that of $f_\alpha$ for all $\alpha\in\Sigma_\theta$.
\end{itemize}

The parameter space $\Sigma_\theta$ and its compactification $\widehat{\Sigma}_\theta$ have some kind of symmetry, which is shown in the following result and the proof is based on direct calculations:

\begin{lem}\label{lem:sym}
Let $\tau(z)=1/(e^{2\pi\ii\theta}z)$. Then
\begin{enumerate}
\item $\tau\circ f_\alpha\circ \tau^{-1}=f_{\alpha'}$, where $\alpha\in\Sigma_\theta$ and $\alpha'=e^{-6\pi\ii\theta}/\alpha$;
\item $\tau\circ g_0\circ \tau^{-1}=g_\infty$; and
\item $\tau(c_1)=c_2$ and $\tau(c_2)=c_1$.
\end{enumerate}
\end{lem}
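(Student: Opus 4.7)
The proof will be a sequence of direct, essentially routine computations. Writing $\lambda = e^{2\pi\ii\theta}$, I first record the basic property $\tau\circ\tau(z) = 1/(\lambda\cdot 1/(\lambda z)) = z$, so $\tau$ is an involution and $\tau^{-1} = \tau$, i.e.\ $\tau^{-1}(w) = 1/(\lambda w)$. This reduces all three items to explicit substitutions followed by algebraic simplification.

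For part (1), the plan is to compute $f_\alpha(\tau^{-1}(w))$ by plugging $z = 1/(\lambda w)$ into $f_\alpha(z) = \alpha(1+\lambda z)/(z+z^2)$. The numerator becomes $1 + 1/w = (w+1)/w$ and the denominator becomes $(\lambda w + 1)/(\lambda^2 w^2)$, so after cancellation
\begin{equation*}
f_\alpha(\tau^{-1}(w)) \;=\; \frac{\alpha\lambda^2\, w(w+1)}{\lambda w + 1}.
\end{equation*}
Applying $\tau$ inverts this quantity and divides by $\lambda$, producing $(\lambda w+1)/(\alpha\lambda^3 w(w+1))$. Comparing with $f_{\alpha'}(w) = \alpha'(1+\lambda w)/(w(w+1))$ forces $\alpha' = 1/(\alpha\lambda^3) = e^{-6\pi\ii\theta}/\alpha$, as claimed.

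Part (2) is handled the same way. Substituting $z = 1/(\lambda w)$ into $g_0(z) = (z+z^2)/(1+\lambda z)$ gives $(\lambda w + 1)/(\lambda^2 w(w+1))$, and applying $\tau$ again yields $\lambda w(w+1)/(\lambda w+1)$, which is exactly $g_\infty(w)$. For part (3), using the identity
\begin{equation*}
-\frac{1}{1-\sqrt{1-\lambda}} \;=\; -\frac{1+\sqrt{1-\lambda}}{(1-\sqrt{1-\lambda})(1+\sqrt{1-\lambda})} \;=\; -\frac{1+\sqrt{1-\lambda}}{\lambda},
\end{equation*}
one sees $\tau(c_1) = 1/(\lambda c_1) = -(1+\sqrt{1-\lambda})/\lambda = c_2$, and then $\tau(c_2) = c_1$ follows from the involution property.

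There is no real obstacle here: every assertion is an algebraic identity that appears after a single substitution. The only point worth being careful about is consistency of the square-root branch in part (3), but the rationalization above shows the identity $\tau(c_1) = c_2$ holds as a formal equality regardless of which branch is chosen, as long as the same branch is used to define both $c_1$ and $c_2$.
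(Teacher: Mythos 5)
Your proof is correct and takes exactly the approach the paper intends: the paper simply states that the lemma "is based on direct calculations" and leaves them out, and you have supplied precisely those calculations, all of which check out (including the rationalization trick for part (c) and the observation that $\tau$ is an involution).
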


By the symmetry of Lemma \ref{lem:sym}, the following result is an immediate consequence of Theorem \ref{zhang11} and Leau-Fatou's flower theorem (\cite[\S 10]{Mil06}):

\begin{cor}\label{cor:marking}
There exists a marking $c_1=c_1(\theta)$, $c_2=c_2(\theta)$ such that
\begin{enumerate}
\item $c_1\in\partial\Delta_\infty^0$ and $c_2\in U_\infty$; and
\item $c_2\in\partial\Delta_0^\infty$ and $c_1\in U_0$.
\end{enumerate}
\end{cor}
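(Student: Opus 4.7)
The plan is to pin down the distribution of critical points for the limit map $g_\infty$ first, and then transport the conclusion to $g_0$ using the involution $\tau$ of Lemma \ref{lem:sym}.

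First I would observe that $g_\infty$ is a rational map of degree $2$ whose only critical points are $\{c_1,c_2\}$, and that these two points are distinct because $\lambda=e^{2\pi\ii\theta}\ne 1$ makes $\sqrt{1-\lambda}\ne 0$. Since $g_\infty$ has a fixed Siegel disk $\Delta_\infty^0$ of bounded type rotation number $\theta$, Theorem \ref{zhang11} forces at least one of $c_1,c_2$ to lie on $\partial\Delta_\infty^0$. On the other hand, $g_\infty$ has a fixed parabolic basin $U_\infty$ at the $1$-parabolic fixed point $\infty$; by the Leau--Fatou flower theorem (each cycle of parabolic basin absorbs a critical orbit, see \cite[\S 10]{Mil06}), $U_\infty$ must contain at least one of $c_1,c_2$. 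Because $\partial\Delta_\infty^0\subset J(g_\infty)$ while $U_\infty$ is an open Fatou component disjoint from the Julia set, these two critical points must be \emph{different}. Hence exactly one of $c_1,c_2$ lies in $\partial\Delta_\infty^0$ and the other lies in $U_\infty$. I would then \emph{define} the marking so that $c_1\in\partial\Delta_\infty^0$ and $c_2\in U_\infty$, which gives (a).

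For (b), I would invoke Lemma \ref{lem:sym}. The map $\tau(z)=1/(e^{2\pi\ii\theta}z)$ is an involution that conjugates $g_0$ to $g_\infty$, swaps $0\leftrightarrow\infty$ as well as $c_1\leftrightarrow c_2$. From $\tau\circ g_0=g_\infty\circ\tau$ and the fact that $\tau$ sends the Siegel point $\infty$ of $g_0$ to the Siegel point $0$ of $g_\infty$, the image $\tau(\Delta_0^\infty)$ is a $g_\infty$-invariant Fatou component containing $0$, hence equals $\Delta_\infty^0$; similarly $\tau(U_0)=U_\infty$ since $U_0$ is attached to the parabolic point $0$ and $\tau(0)=\infty$. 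Applying $\tau$ to the conclusion of (a) then gives $c_2=\tau(c_1)\in\tau(\partial\Delta_\infty^0)=\partial\Delta_0^\infty$ and $c_1=\tau(c_2)\in\tau(U_\infty)=U_0$, which is exactly (b).

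The only place where care is needed is the distinctness of the two critical points produced by Theorem \ref{zhang11} and the flower theorem; this uses nothing more than the standard fact that $\partial\Delta_\infty^0$ lies in the Julia set while $U_\infty$ is an open Fatou component. Apart from that, the proof is a direct bookkeeping argument, with the symmetry $\tau$ handling the second half without any extra dynamical input.
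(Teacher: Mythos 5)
Your proof is correct and uses precisely the ingredients the paper cites for this corollary (Theorem \ref{zhang11}, the Leau--Fatou flower theorem, and the symmetry of Lemma \ref{lem:sym}); the paper merely asserts the result as an "immediate consequence" while you spell out the bookkeeping, including the Julia-vs-Fatou argument that forces the two critical points to play distinct roles and the transport of the conclusion via the involution $\tau$.
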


The following result shows that the Siegel disks move continuously in the compactified parameter space $\widehat{\Sigma}_\theta$.

\begin{prop}\label{conti-revisited}
We have
\begin{enumerate}
\item If $\alpha\to\infty$, then $\partial{\Delta_{\alpha}^0} \to \partial{\Delta_\infty^0}$, $ \overline{\Delta_{\alpha}^0} \to  \overline{\Delta_\infty^0}$ and $\overline{\Delta_{\alpha}^\infty}\to \{\infty\}$, with respect to the Hausdorff metric; and
\item If $\alpha \to 0$, then $\partial{\Delta_{\alpha}^{\infty}} \to \partial{\Delta_0^{\infty}}$, $ \overline{\Delta_{\alpha}^{\infty}} \to  \overline{\Delta_0^{\infty}}$ and $\overline{\Delta_{\alpha}^0}\to \{0\}$, with respect to the Hausdorff metric.
\end{enumerate}
\end{prop}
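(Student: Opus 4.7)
The plan is to establish part (1) directly and deduce (2) from the symmetry $\tau$ of Lemma~\ref{lem:sym}: since $\tau(\Delta_\alpha^0)=\Delta_{\alpha'}^\infty$, $\tau(\Delta_\alpha^\infty)=\Delta_{\alpha'}^0$ with $\alpha'=e^{-6\pi\ii\theta}/\alpha$, and $\tau(\Delta_\infty^0)=\Delta_0^\infty$, the limits $\alpha\to\infty$ in (1) correspond under $\tau$ to the limits $\alpha'\to 0$ in (2).

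For the convergence $\overline{\Delta_\alpha^0}\to\overline{\Delta_\infty^0}$ in (1), I would adapt the normal-family argument of Proposition~\ref{continuousity}. By Lemma~\ref{disjoint} each conformal linearization $h_\alpha^0:\D\to\Delta_\alpha^0$ extends to a $K_0$-quasiconformal homeomorphism $\EC\to\EC$ (still denoted $h_\alpha^0$), and \eqref{equ:Leh} together with the fact that $\{c_1,c_2\}$ never lies in $\Delta_\alpha^0$ yields a uniform bound $\overline{\Delta_\alpha^0}\subset\{|z|<M\}$. The new difficulty, compared with Proposition~\ref{continuousity}, is that Lemma~\ref{0case} only gives uniform convergence $f_\alpha^{\circ 2}\to g_\infty$ on compact subsets of $\EC\setminus\{\infty,-1/\lambda\}$, so I must also control the distance from $\overline{\Delta_\alpha^0}$ to the pole $-1/\lambda$ of $g_\infty$. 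A local expansion of $f_\alpha^{\circ 2}$ at $-1/\lambda$ shows that $|f_\alpha^{\circ 2}(z)|\gtrsim |1-\lambda|/|z+1/\lambda|$ whenever $|z+1/\lambda|$ is either much larger or much smaller than $1/|\alpha|$, while $|f_\alpha^{\circ 2}(z)|$ grows like $|\alpha|$ in the transitional regime; combined with the invariance $f_\alpha^{\circ 2}(\partial\Delta_\alpha^0)=\partial\Delta_\alpha^0\subset\{|w|\leq M\}$, this forces the distance from $\partial\Delta_\alpha^0$ to $-1/\lambda$ to be at least some $\epsilon_0>0$ for $|\alpha|$ sufficiently large. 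With this in hand, any subsequential limit $h$ of $h_\alpha^0$ (normalized to fix $0$, $\infty$ and a third point outside all Siegel disks) is $K_0$-quasiconformal with $h|_\D$ holomorphic, $h(0)=0$ and $h'(0)>0$; passing to the limit in $f_\alpha^{\circ 2}\circ h_\alpha^0=h_\alpha^0\circ R_\theta$ identifies $h|_\D$ with the unique normalized linearization of $g_\infty$ at $0$, so that $\overline{\Delta_\alpha^0}\to\overline{\Delta_\infty^0}$ in the Hausdorff sense.

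For the collapse $\overline{\Delta_\alpha^\infty}\to\{\infty\}$ in (1), I would pass to the coordinate $w=1/z$, in which $\Delta_\alpha^\infty$ becomes a $K$-quasidisk $\tilde{\Delta}_\alpha$ containing $w=0$ and forming a Siegel disk at $0$ for the conjugated map $\tilde{f}_\alpha^{\circ 2}(w):=1/f_\alpha^{\circ 2}(1/w)$ with rotation number $\theta$. The key dynamical input is that, besides $w=0$, the map $\tilde{f}_\alpha^{\circ 2}$ has fixed points satisfying the cubic $\alpha w^2(w+\lambda)=w+1$, and two of these---call them $w_\alpha^\pm$---approach $0$ at the rate $w_\alpha^\pm\sim\pm 1/\sqrt{\alpha\lambda}$ as $\alpha\to\infty$, reflecting the merging of fixed points that creates the parabolic fixed point of $\tilde{g}_\infty$ at $0$ in the limit. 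Since an irrational rotation on $\tilde{\Delta}_\alpha$ has $w=0$ as its unique fixed point, neither $w_\alpha^\pm$ lies in $\overline{\tilde{\Delta}_\alpha}$, so any segment from $0$ to $w_\alpha^+$ crosses $\partial\tilde{\Delta}_\alpha$ and gives $\min_{w\in\partial\tilde{\Delta}_\alpha}|w|\leq|w_\alpha^+|=O(1/\sqrt{|\alpha|})$; the Lehto inequality \eqref{equ:Leh} applied to the Riemann map $\D\to\tilde{\Delta}_\alpha$ then yields $\max_{w\in\partial\tilde{\Delta}_\alpha}|w|\leq c(K)\cdot O(1/\sqrt{|\alpha|})\to 0$, so $\tilde{\Delta}_\alpha\to\{0\}$, equivalently $\overline{\Delta_\alpha^\infty}\to\{\infty\}$ in the Hausdorff sense.

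The main obstacle is the degeneracy of the family at the boundary of the parameter space, where the degree of $f_\alpha^{\circ 2}$ drops from four to two. This forces the quantitative work described above: for the surviving Siegel disk $\Delta_\alpha^0$, uniform separation from the extra pole $-1/\lambda$ of $g_\infty$; and for the vanishing Siegel disk $\Delta_\alpha^\infty$, identification of the $1/\sqrt{|\alpha|}$ scale at which the additional fixed points of $\tilde{f}_\alpha^{\circ 2}$ encroach on the Siegel center from outside.
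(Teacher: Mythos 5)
Your overall strategy matches the paper for the part that matters most---the normal--family argument of Proposition~\ref{continuousity} applied to $h_\alpha^0$---and your use of the symmetry $\tau$ from Lemma~\ref{lem:sym} to deduce (b) from (a) is a clean replacement for the paper's ``the proof of Part (b) is similar.'' Where you diverge most is the collapse $\overline{\Delta_\alpha^\infty}\to\{\infty\}$. The paper simply observes that $f_\alpha(\overline{\Delta_\alpha^\infty})=\overline{\Delta_\alpha^0}\subset\{1/M<|z|<M\}$ for large $|\alpha|$ and reads off from the formula $f_\alpha(z)=\alpha(1+\lambda z)/(z+z^2)$ that this forces $|z|\gtrsim|\alpha|/M$ on $\overline{\Delta_\alpha^\infty}$ (using that $\Delta_\alpha^\infty$ is connected and contains $\infty$). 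Your argument---identifying the two fixed points $w_\alpha^\pm\sim\pm 1/\sqrt{\alpha\lambda}$ of $\tilde f_\alpha$ that approach $0$ in the $w=1/z$ coordinate, observing that they cannot lie in $\overline{\tilde\Delta_\alpha}$, and invoking the Lehto ratio bound \eqref{equ:Leh}---is correct and has independent interest (it isolates the $1/\sqrt{|\alpha|}$ scale of the implosion), but it is more elaborate than the paper's two-line computation.

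There is one genuine gap. In controlling the distance from $\partial\Delta_\alpha^0$ to the pole $-1/\lambda$ of $g_\infty$, you assert that $|f_\alpha^{\circ 2}(z)|$ grows like $|\alpha|$ in the transitional regime $|z+1/\lambda|\sim 1/|\alpha|$. This is false: writing $\epsilon=1+\lambda z$ and $A=z+z^2\to -(\lambda-1)/\lambda^2$, the second factor in \eqref{equ:1} is $\frac{A+\alpha\lambda^2\epsilon/\lambda}{A+\alpha\epsilon}$ (after substituting $1+\lambda z=\epsilon$), and when $\alpha\epsilon$ is of order $1$ this factor has both a zero (at $\epsilon\approx -A/(\alpha\lambda)$, giving a zero of $f_\alpha^{\circ 2}$) and a pole (at $\epsilon\approx -A/\alpha$) inside the regime. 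So the lower bound $|f_\alpha^{\circ 2}(z)|\gtrsim|\alpha|$ fails there, and the stated conclusion that $\textup{dist}(\partial\Delta_\alpha^0,-1/\lambda)\geq\epsilon_0>0$ does not follow from the modulus estimate alone. Fortunately the estimate is not needed: the functional equation $f_{\alpha_n}^{\circ 2}\circ h_{\alpha_n}^0=h_{\alpha_n}^0\circ R_\theta$ passes to the limit $g_\infty\circ h=h\circ R_\theta$ on $\D\setminus h^{-1}(-1/\lambda)$, which is $\D$ minus at most one point since $h|_\D$ is injective; by continuity of both sides on $\D$ (as maps into $\EC$), the identity extends to all of $\D$, and then $-1/\lambda\in h(\D)$ would force $h$ to take the value $\infty=g_\infty(-1/\lambda)$, contradicting $h(\overline{\D})\subset\{|z|\leq M\}$. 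Replacing your quantitative step with this pointwise-plus-continuity argument closes the gap and keeps the rest of your proof intact.
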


\begin{proof}
We only prove (a) since the proof of Part (b) is similar. By Lemma \ref{disjoint}, $\partial\Delta_\infty^0$ is a quasicircle. By Lemma \ref{0case}, the statements $\partial{\Delta_{\alpha}^0} \to \partial{\Delta_\infty^0}$ and $ \overline{\Delta_{\alpha}^0} \to  \overline{\Delta_\infty^0}$ as $\alpha\to\infty$ follow from a completely similar proof of Proposition \ref{continuousity}.

By the continuity of $\partial\Delta_\alpha^0$ as $\alpha\to\infty$, there exist two constants $\varepsilon_0>0$  and $M>1$, such that if $|\alpha|\geq 1/\varepsilon_0$, then
\begin{equation}
\partial\Delta_\alpha^0 \subset\{z\in\C: 1/M<|z|<M\}.
\end{equation}
Note that $f_\alpha(\overline{\Delta_\alpha^\infty})=\overline{\Delta_\alpha^0}$. By the formula of $f_\alpha$ in \eqref{equ:family}, the spherical diameter of $\overline{\Delta_\alpha^\infty}$ tends to $0$ as $\alpha\to \infty$. Since $\infty\in\Delta_\alpha^\infty$, we have $\overline{\Delta_{\alpha}^\infty}\to \{\infty\}$ as $\alpha\to \infty$.
\end{proof}

As an immediate corollary, we have:

\begin{cor}\label{cor:Siegel-crit}
Under the same marking of critical points as in Corollary \ref{cor:marking}, there exists $\delta=\delta(\theta)>1$ such that
\begin{enumerate}
\item If $\delta\leq |\alpha|<+\infty$, then $c_1\in\partial\Delta_\alpha^0$ and $\partial\Delta_\alpha^\infty$ contains no critical point; and
\item If $0<|\alpha|\leq 1/\delta$, then $c_2\in\partial\Delta_\alpha^\infty$ and $\partial\Delta_\alpha^0$ contains no critical point.
\end{enumerate}
\end{cor}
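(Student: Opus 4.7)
The plan is to combine the Hausdorff continuity from Proposition \ref{conti-revisited} with the critical-point marking of Corollary \ref{cor:marking}, which already distinguishes $c_1$ and $c_2$ on the limit maps $g_\infty,g_0\in\widehat{\Sigma}_\theta$. The rough picture: as $|\alpha|\to\infty$, the disk $\Delta_\alpha^\infty$ collapses onto the point $\infty$ while $\partial\Delta_\alpha^0$ deforms continuously to $\partial\Delta_\infty^0$, which by the marking contains $c_1$ but not $c_2$. So for $|\alpha|$ sufficiently large neither critical point can lie on $\partial\Delta_\alpha^\infty$, and continuity rules out $c_2\in\partial\Delta_\alpha^0$; Lemma \ref{atleast1} then forces $c_1\in\partial\Delta_\alpha^0$. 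Part (b) is handled afterwards by the involution $\tau$ of Lemma \ref{lem:sym}.

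For part (a), I would fix the marking so that $c_1\in\partial\Delta_\infty^0$ and $c_2\in U_\infty$, the fixed parabolic basin of $g_\infty$. Since $U_\infty$ is open and disjoint from the Julia set of $g_\infty$, we have $\textup{dist}(c_2,\partial\Delta_\infty^0)>0$; Proposition \ref{conti-revisited}(a) and Hausdorff convergence $\partial\Delta_\alpha^0\to\partial\Delta_\infty^0$ then furnish a constant $\delta_1(\theta)>1$ with $c_2\notin\partial\Delta_\alpha^0$ for every $|\alpha|\ge\delta_1$. The same proposition gives $\overline{\Delta_\alpha^\infty}\to\{\infty\}$, so since $c_1,c_2\in\C$ there is $\delta_2(\theta)>1$ with $\{c_1,c_2\}\cap\overline{\Delta_\alpha^\infty}=\emptyset$ whenever $|\alpha|\ge\delta_2$. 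Setting $\delta:=\max\{\delta_1,\delta_2\}$, Lemma \ref{atleast1} says that at least one of $c_1,c_2$ lies in $\partial\Delta_\alpha^0\cup\partial\Delta_\alpha^\infty$; all alternatives except $c_1\in\partial\Delta_\alpha^0$ have been excluded. Finally, $c_1\in\partial\Delta_\alpha^0$ together with Lemma \ref{disjoint} forces $c_1\notin\partial\Delta_\alpha^\infty$, and combined with $c_2\notin\overline{\Delta_\alpha^\infty}$ this yields that $\partial\Delta_\alpha^\infty$ carries no critical point.

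For part (b), I would just invoke the involution $\tau(z)=1/(e^{2\pi\ii\theta}z)$ from Lemma \ref{lem:sym}. If $0<|\alpha|\le 1/\delta$ then $\alpha'=e^{-6\pi\ii\theta}/\alpha$ satisfies $|\alpha'|\ge\delta$, and the conjugacy $\tau\circ f_\alpha\circ\tau^{-1}=f_{\alpha'}$ together with $\tau(0)=\infty$ and $\tau(c_1)=c_2$ sends $\Delta_\alpha^\infty\leftrightarrow\Delta_{\alpha'}^0$ and interchanges the two critical points; applying part (a) to $f_{\alpha'}$ and transporting by $\tau$ then gives $c_2\in\partial\Delta_\alpha^\infty$ with no critical point on $\partial\Delta_\alpha^0$. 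The small consistency check one should make is that the $\tau$-symmetry is compatible with the marking in Corollary \ref{cor:marking}, but this is automatic because $\tau$ exchanges $g_0\leftrightarrow g_\infty$ and $c_1\leftrightarrow c_2$ simultaneously, so the roles of the Siegel boundaries and the parabolic basins match up correctly.

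I do not expect any serious obstacle: once the compactification $\widehat{\Sigma}_\theta$ and the marking have been set up in Section \ref{limitdynamics}, the argument is a short continuity/elimination combined with the $\tau$-symmetry, and the only mildly delicate point is the qualitative use of Hausdorff convergence to argue that an open-set critical point cannot jump onto $\partial\Delta_\alpha^0$ at the limit.
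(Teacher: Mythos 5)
Your argument is correct and is exactly the reasoning the paper has in mind when it labels this statement ``an immediate corollary'' of Proposition~\ref{conti-revisited} and Corollary~\ref{cor:marking}: Hausdorff convergence $\partial\Delta_\alpha^0\to\partial\Delta_\infty^0$, collapse $\overline{\Delta_\alpha^\infty}\to\{\infty\}$, and Lemma~\ref{atleast1} force $c_1\in\partial\Delta_\alpha^0$ for $|\alpha|$ large, while $c_2\in U_\infty$ is kept off $\partial\Delta_\alpha^0$ at positive distance. Handling part~(b) via the involution $\tau$ of Lemma~\ref{lem:sym} rather than by rerunning the limit argument at $\alpha\to 0$ with Proposition~\ref{conti-revisited}(b) and Corollary~\ref{cor:marking}(b) is a cosmetic variation, not a different method.
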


\subsection{Location of the critical points, for specific $\theta$'s}\label{subsec:theta}

Based on Corollary \ref{cor:Siegel-crit}, we know that when $\alpha$ is sufficiently large or small, then $\partial\Delta_\alpha^0\cup\partial\Delta_\alpha^\infty$ contains exactly one critical point. However, we still cannot determine which critical point (This depends on the marking) is contained in the boundary of the Siegel disks. In this subsection, we study this problem for some specific given rotation number $\theta$.

\begin{prop}\label{prop:bounded}
If $\theta\in(\tfrac{1}{6},\tfrac{5}{6})$ is a bounded type irrational number, then under the marking \eqref{equ:c-1-2}, we have
\begin{itemize}
\item If $\delta\leq |\alpha|<+\infty$, then $c_1\in\partial\Delta_{\alpha}^0$; and
\item If $0<|\alpha|\leq 1/\delta$, then $c_2\in \partial\Delta_{\alpha}^\infty$.
\end{itemize}
\end{prop}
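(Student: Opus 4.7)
The plan is to reduce to identifying the critical point on $\partial\Delta_\infty^0$ for the limit map $g_\infty$: Corollary \ref{cor:Siegel-crit} together with the Hausdorff continuity of Proposition \ref{conti-revisited} propagates that identification to $f_\alpha$ for $|\alpha|\ge\delta$, and the $g_0$ (hence $|\alpha|\le 1/\delta$) half follows from the involution $\tau$ of Lemma \ref{lem:sym}, which conjugates $g_\infty$ to $g_0$, swaps $c_1\leftrightarrow c_2$, and sends $\Delta_\infty^0$ to $\Delta_0^\infty$. By Corollary \ref{cor:marking}, exactly one of the explicit critical points lies on $\partial\Delta_\infty^0$ and the other lies in the parabolic basin $U_\infty$, so the proposition reduces to showing that, for bounded type $\theta\in(\tfrac16,\tfrac56)$ and the branch $\re\sqrt{1-\lambda}>0$, the point $c_2=-(1-\sqrt{1-\lambda})^{-1}$ lies in $U_\infty$.

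The core observation is the identity
\begin{equation*}
g_\infty(c)=2c+1,
\end{equation*}
valid for either critical point $c$, obtained by substituting the critical relation $\lambda c^2+2c+1=0$ into $g_\infty(z)=\lambda z(1+z)/(1+\lambda z)$. I would then exhibit an attracting petal $\mathcal{P}\subset U_\infty$ at $\infty$ containing $2c_2+1$. In the coordinate $w=1/z$ the map has the parabolic normal form $\tilde g_\infty(w)=w+\beta w^2+O(w^3)$ with $\beta=(1-\lambda)/\lambda$, so by Leau--Fatou the attracting axis at $\infty$ is the ray $\arg z=\pi/2-\pi\theta$, and the standard Fatou-coordinate petal in $z$-coordinates contains every $z$ whose modulus is large enough (relative to $|\beta|^{-1}$) and whose argument lies strictly within $\pi/2$ of the axis. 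For $\theta\in(\tfrac16,\tfrac56)$ one has $|1-\lambda|=2|\sin\pi\theta|\ge 1$, hence $|\beta|\ge 1$, $|\sqrt{1-\lambda}|\in[1,\sqrt 2]$, and $\arg\sqrt{1-\lambda}\in(-\pi/6,\pi/6)$; a direct computation from these bounds gives $|2c_2+1|\ge 1/\sqrt{2-\sqrt 3}\approx 1.93$ and an angular deviation of $\arg(2c_2+1)$ from the axis bounded by $\pi/6$ (attained at the endpoints $\theta=\tfrac16,\tfrac56$ and vanishing at $\theta=\tfrac12$). Both bounds sit strictly inside the petal, so $g_\infty(c_2)=2c_2+1\in\mathcal{P}\subset U_\infty$, giving $c_2\in U_\infty$, and Corollary \ref{cor:marking} then forces $c_1\in\partial\Delta_\infty^0$.

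The main obstacle will be an effective version of the Leau--Fatou construction making $\mathcal{P}$ concretely large enough to contain $2c_2+1$ uniformly over the full range, especially at the endpoints $\theta=\tfrac16,\tfrac56$ where $|\beta|=1$ is minimal. This is also exactly why the method does not extend to all bounded type $\theta$: as $\theta\to 0$ one has $|\beta|\to 0$ (so the petal requires arbitrarily large $|z|$) while $|2c_2+1|$ stays bounded and the angular deviation grows to $\pi/2$, so the capture argument degenerates, matching the open question raised just before the statement.
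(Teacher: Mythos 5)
Your reduction is the same as the paper's: pass to the limit map $g_\infty$ via Lemma~\ref{lem:sym} and Proposition~\ref{conti-revisited}, invoke Corollary~\ref{cor:marking}, and reduce the claim to $c_2=-(1-\sqrt{1-\lambda})^{-1}\in U_\infty$. Your identity $g_\infty(c)=2c+1$ for either critical point $c$ is correct and is a nice shortcut: after the paper's linear change of variable $\varphi(\zeta)=\eta\zeta$ with $\eta=(\lambda-1)/\lambda$, the quantity $(2c_2+1)/\eta$ simplifies to exactly the paper's value $1+\tfrac{1}{1-\lambda}+2\sqrt{\tfrac{1}{1-\lambda}}$, so both routes arrive at the same point to be located.

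The gap is exactly where you flag it. You appeal to a Leau--Fatou petal described only qualitatively (``modulus large enough relative to $|\beta|^{-1}$, argument within $\pi/2$ of the axis'') and then check that $2c_2+1$ satisfies certain modulus and angle bounds, but you never construct a concrete petal against which those bounds can be tested; your own last paragraph concedes that producing an effective petal over the whole range $\theta\in(\tfrac16,\tfrac56)$ is ``the main obstacle.'' This is not a cosmetic issue: the statement ``argument strictly within $\pi/2$ of the axis'' with ``modulus large enough'' is not a petal in general (a fixed sector of opening $\pi$ is not forward-invariant for $z\mapsto z+\text{const}+o(1)$ without a uniform bound on the error term), and the constants you quote are not anchored to anything. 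Incidentally, the numeric bound you state is also off: $|2c_2+1|=|1+\sqrt{1-\lambda}|^2\ge 2+\sqrt3\approx 3.73$, not $\sqrt{2+\sqrt3}\approx 1.93$ (you appear to have dropped a square), though a sharper constant does not by itself close the gap.

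What is missing is precisely the step the paper supplies: after the conjugation $\varphi(\zeta)=\eta\zeta$ one gets
\begin{equation*}
\widetilde g_\infty(\zeta)=\zeta+1-\frac{1}{1+(\lambda-1)\zeta},
\end{equation*}
and the hypothesis $\theta\in(\tfrac16,\tfrac56)$ gives $|\lambda-1|>1$, so for $\re\zeta>2$ the error term has modulus $\le (|\lambda-1||\zeta|-1)^{-1}<1$, hence $\re\widetilde g_\infty(\zeta)>\re\zeta$. This makes $\{\re\zeta>2\}$ an explicit forward-invariant region inside $U_\infty$, with no reliance on generic petal estimates, and the inequality $\re\big(1+\tfrac{1}{1-\lambda}+2\sqrt{\tfrac{1}{1-\lambda}}\big)>2$ (using $\re\tfrac{1}{1-\lambda}=\tfrac12$) then finishes. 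Replacing your petal appeal with this elementary forward-invariance computation would close the proof; as written, the argument is an outline with its central estimate unverified.
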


\begin{proof}
By Lemma \ref{lem:sym} and Proposition \ref{conti-revisited}, it suffices to prove that for $g_\infty$, the critical point $c_2=-(1-\sqrt{1-\lambda})^{-1}$ is contained in the parabolic basin $U_\infty$, where $\lambda=e^{2\pi\ii\theta}$. A direct calculation shows that
\begin{equation}
g_\infty(z)=\frac{\lambda(z+z^2)}{1+\lambda z}=z+\frac{\lambda-1}{\lambda}-\frac{\lambda-1}{\lambda}\cdot\frac{1}{1+\lambda z}.
\end{equation}
Denote $\eta=(\lambda-1)/\lambda$ and $\varphi(\zeta)=\eta\zeta$. Then
\begin{equation}
\widetilde{g}_\infty(\zeta)=\varphi^{-1}\circ g_\infty\circ\varphi(\zeta)=\zeta+1-\frac{1}{1+(\lambda-1)\zeta}.
\end{equation}
Since $\theta\in(\tfrac{1}{6},\tfrac{5}{6})$, we have $1<|\lambda-1|\leq 2$.
If $\re\zeta>2$, then
\begin{equation}
\re \widetilde{g}_\infty(\zeta)\geq \re\zeta+1-\frac{1}{|\lambda-1|\cdot|\zeta|-1}>\re\zeta.
\end{equation}
This implies that $\{\zeta\in\C:\re\zeta>2\}$ is contained in the immediate parabolic basin of $\widetilde{g}_\infty$.
The corresponding critical point of $\widetilde{g}_\infty$ is $\widetilde{c}_2=\varphi^{-1}(c_2)=c_2/\eta$ and we have
\begin{equation}\label{equ:g-c}
\widetilde{g}_\infty(\widetilde{c}_2)=1+\frac{1}{1-\lambda}+2\sqrt{\frac{1}{1-\lambda}}.
\end{equation}
Note that $\re (\tfrac{1}{1-\lambda})=\tfrac{1}{2}$ for all $\lambda\in\partial\D\setminus\{1\}$. Hence
\begin{equation}
\re\widetilde{g}_\infty(\widetilde{c}_2)>1+\frac{1}{2}+2\cdot\frac{1}{2}>2.
\end{equation}
Therefore, $\widetilde{c}_2$ is contained in the immediate parabolic basin of $\widetilde{g}_\infty$. In particular, $c_2$ is contained in $U_\infty$.
\end{proof}

The result of Proposition \ref{prop:bounded} can be  strengthened if one considers the several iterations $\widetilde{g}_\infty^{\circ n}(\widetilde{c}_2)$ in \eqref{equ:g-c} for some $n\geq 2$. One can see that the two critical points $c_1$ and $c_2$ are arbitrarily close provided $\theta$ is arbitrarily close to $0$. Hence it is hard to distinguish the two critical orbits in this case. But we believe that Proposition \ref{prop:bounded} holds for all bounded type irrational numbers. See Figure \ref{Fig_Julia}.

\begin{figure}[!htpb]
 \setlength{\unitlength}{1mm}
  \centering
  \includegraphics[width=0.95\textwidth]{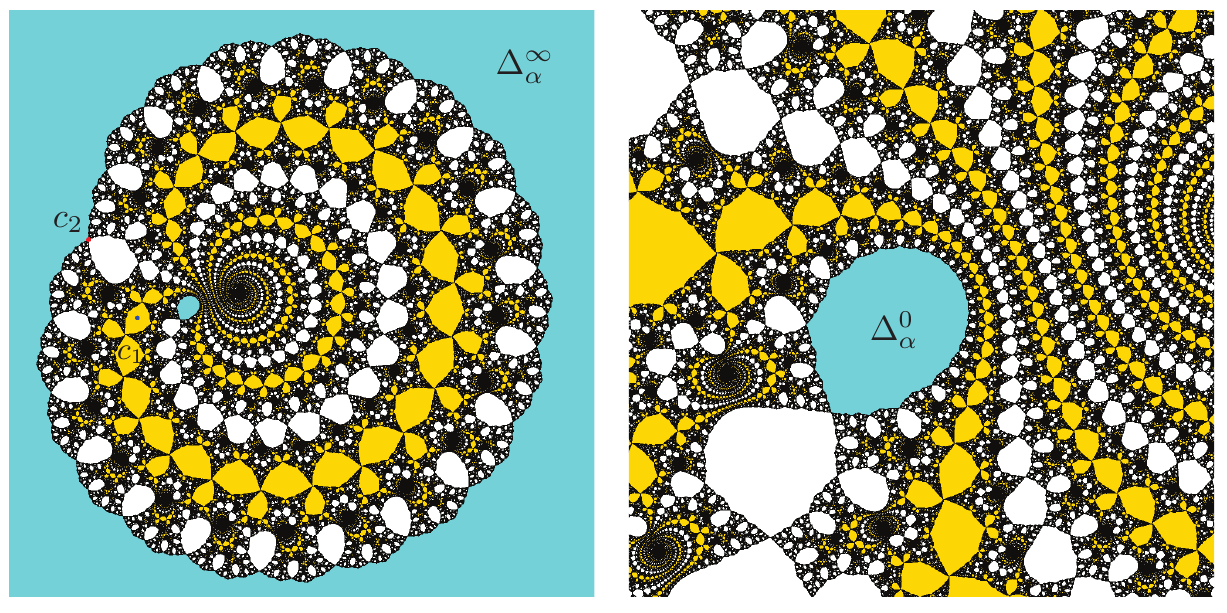}
  \caption{The Julia set of $f_\alpha\in\Sigma_\theta$ and its zoom near $\Delta_\alpha^0$, where $\theta=2/(39+\sqrt{5})=[0;20,1,1,1,\cdots]$
  and $\alpha\approx 0.30689283+ 0.11243024\ii$ are chosen such that $f_\alpha^{\circ 3}(c_1)=c_1$. The critical points $c_1$ and $c_2$ are colored by blue and red respectively. One can observe that $c_2=-(1-\sqrt{1-\lambda})^{-1}\in\partial\Delta_\alpha^\infty$.}
  \label{Fig_Julia}
\end{figure}

In the rest of this paper, without loss of generality, we always assume that the critical points $c_1=c_1(\theta)$ and $c_2=c_2(\theta)$ are marked such that Proposition \ref{prop:bounded} holds, i.e., $c_1\in\partial\Delta_{\alpha}^0$ if $\alpha$ is large enough while $c_2\in \partial\Delta_{\alpha}^\infty$ if $\alpha$ is small enough. Otherwise, one can exchange the subscripts of the critical points by the symmetry of the parameter space as stated in Lemma \ref{lem:sym}.

\section{Dynamics of $f_{\alpha}$ for $\alpha \in \Gamma$}\label{gamma}

Based on Corollary \ref{cor:Siegel-crit}, we consider the following set:
\begin{equation}\label{equ:Gamma}
\Gamma:=\{\alpha\in\Sigma_\theta:\partial\Delta_\alpha^0\cup\partial\Delta_\alpha^\infty \text{ contains } c_1 \text{ and }c_2\}.
\end{equation}
It is easy to see that $\Gamma\neq\emptyset$. Indeed, there exists a unique $\alpha\in\Gamma$ satisfying $f_\alpha(c_1)=c_2$ (or $f_\alpha(c_2)=c_1$).
In this section, we give a combinatorial description of the Fatou set of $f_\alpha$ and prove that the Julia set of $f_\alpha$ has zero two-dimensional Lebesgue measure for all $\alpha\in\Gamma$.

\subsection{The addresses of the Fatou components}\label{subsec:address}

We first prove the following result:

\begin{lem}\label{lem:Gamma-crit}
For any $\alpha\in \Gamma$, we have
\begin{enumerate}
\item $c_1\in\partial\Delta_\alpha^0$ and $c_2\in\partial\Delta_\alpha^\infty$;
\item The Fatou set $F(f_{\alpha})$ of $f_\alpha$ equals to $F(f^{\circ 2}_{\alpha})=\bigcup_{n=0}^{\infty}f^{-2n}_{\alpha}(\Delta_{\alpha}^0 \cup \Delta_{\alpha}^{\infty})$; and
\item $\big(\bigcup_{n=0}^{\infty}f^{-2n}_{\alpha}(\overline{\Delta_{\alpha}^0})\big)  \cap \big(\bigcup_{n=0}^{\infty}f^{-2n}_{\alpha}(\overline{\Delta_{\alpha}^{\infty}})\big)=\emptyset$.
\end{enumerate}
\end{lem}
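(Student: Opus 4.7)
I would verify the three assertions in the order (c), (b), (a), reflecting their increasing difficulty. Part (c) is the quickest: both $\overline{\Delta_\alpha^0}$ and $\overline{\Delta_\alpha^\infty}$ are forward invariant under $f_\alpha^{\circ 2}$. If a point $z$ lay in $f_\alpha^{-2n}(\overline{\Delta_\alpha^0})\cap f_\alpha^{-2m}(\overline{\Delta_\alpha^\infty})$ with $n\le m$, then applying $f_\alpha^{\circ 2(m-n)}$ to $f_\alpha^{\circ 2n}(z)\in\overline{\Delta_\alpha^0}$ keeps the image in $\overline{\Delta_\alpha^0}$, while it equals $f_\alpha^{\circ 2m}(z)\in\overline{\Delta_\alpha^\infty}$, contradicting Lemma \ref{disjoint}.

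For (b), I would invoke Sullivan's no-wandering-domains theorem together with the classification of periodic Fatou components. Since $\deg(f_\alpha)=2$, Shishikura's theorem excludes Herman rings. For $\alpha\in\Gamma$ the set $\partial\Delta_\alpha^0\cup\partial\Delta_\alpha^\infty$ is $f_\alpha^{\circ 2}$-invariant and contains both critical points, so the post-critical set is contained in it. Hence no other attracting or parabolic cycle can exist (each would require a critical point in its basin) and no further Siegel cycle can exist (its boundary would need a critical orbit accumulating on it). Thus $\{\Delta_\alpha^0,\Delta_\alpha^\infty\}$ is the only cycle of periodic Fatou components; combined with Sullivan and the fact that $f_\alpha$ swaps $\Delta_\alpha^0\leftrightarrow\Delta_\alpha^\infty$, the Fatou set equals $\bigcup_{n\ge 0}f_\alpha^{-2n}(\Delta_\alpha^0\cup\Delta_\alpha^\infty)$, yielding (b) together with the first equality $F(f_\alpha)=F(f_\alpha^{\circ 2})$.

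Part (a) is the main obstacle. I would argue by contradiction: suppose $\alpha_0\in\Gamma$ has the swapped marking $c_1\in\partial\Delta_{\alpha_0}^\infty$ and $c_2\in\partial\Delta_{\alpha_0}^0$. By Proposition \ref{continuousity}, each of the four loci $\{\alpha:c_i\in\partial\Delta_\alpha^j\}$ is closed in $\Sigma_\theta$, and Lemma \ref{disjoint} forces the two possibilities for each $c_i$ to be disjoint. By continuity of Siegel boundaries, in a small neighborhood $N$ of $\alpha_0$ the critical point $c_1$ remains bounded away from $\partial\Delta_\alpha^0$ and $c_2$ remains bounded away from $\partial\Delta_\alpha^\infty$; combining this with Lemma \ref{atleast1} and Lemma \ref{lem:only-one}, the only configurations compatible with all of the hypotheses on $N$ are the swapped type at $\alpha_0$ and the types in which exactly one of $c_1\in\partial\Delta_\alpha^\infty$ or $c_2\in\partial\Delta_\alpha^0$ holds. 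Consequently $N$ is covered by the two closed sets $A=\{\alpha\in N:c_1\in\partial\Delta_\alpha^\infty\}$ and $B=\{\alpha\in N:c_2\in\partial\Delta_\alpha^0\}$.

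The hard step is to turn this local "cross-like" structure into a contradiction with Corollary \ref{cor:Siegel-crit}. A connectedness argument applied to $N\setminus(A\cap B)$, which must be written as the disjoint union of the open sets $N\setminus A$ and $N\setminus B$, forces one of $A$, $B$ to equal $N$; that is, either $c_1\in\partial\Delta_\alpha^\infty$ on a full $2$-dimensional neighborhood, or $c_2\in\partial\Delta_\alpha^0$ on one. I would then propagate such an open condition using the closedness of the relevant locus and the continuity of Siegel boundaries to reach the large-$|\alpha|$ region, where the marking convention from Corollary \ref{cor:Siegel-crit} imposes $c_1\in\partial\Delta_\alpha^0$ (and no critical point on $\partial\Delta_\alpha^\infty$) — contradicting $c_1\in\partial\Delta_\alpha^\infty$ — and analogously for the other case via the small-$|\alpha|$ side. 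The main difficulty lies precisely in this propagation: ensuring that the "forbidden" open condition cannot be localized to a bounded region requires combining Hausdorff continuity of the Siegel disks with the symmetry of $\widehat{\Sigma}_\theta$ from Lemma \ref{lem:sym} and the limit behavior in Proposition \ref{conti-revisited}.
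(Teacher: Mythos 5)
Your parts (b) and (c) are fine and essentially match the paper. Part (c) is the same forward-invariance argument: push the alleged common point forward to a point of $\overline{\Delta_\alpha^0}\cap\overline{\Delta_\alpha^\infty}$ and contradict Lemma \ref{disjoint}. Part (b) is correct, though you invoke more machinery (Sullivan, Shishikura) than the paper, which simply observes that both critical orbits are trapped in $\partial\Delta_\alpha^0\cup\partial\Delta_\alpha^\infty$ so every Fatou component eventually lands on the Siegel cycle.

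Part (a), however, has a genuine gap at the connectedness step. You correctly reduce to covering a small neighborhood $N$ of $\alpha_0$ by the two closed sets $A=\{\alpha\in N:c_1\in\partial\Delta_\alpha^\infty\}$ and $B=\{\alpha\in N:c_2\in\partial\Delta_\alpha^0\}$, but these are \emph{not} disjoint (indeed $\alpha_0\in A\cap B$), so connectedness of $N$ gives nothing. Your fix --- arguing that $N\setminus(A\cap B)=(N\setminus A)\sqcup(N\setminus B)$ forces one of $A,B$ to be all of $N$ --- requires $N\setminus(A\cap B)$ to be connected, and there is no reason for that: $A\cap B$ could be, say, a curve through $\alpha_0$ separating $N$, in which case both $N\setminus A$ and $N\setminus B$ can be nonempty. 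The subsequent ``propagation'' to the large-$|\alpha|$ region is also only sketched; a closed set with nonempty interior need not be unbounded, so closedness plus continuity alone does not carry the contradiction out to $|\alpha|\ge\delta$.

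The paper avoids all of this with a \emph{disjoint} closed cover and a path. Set $\Gamma_1=\{c_1\in\partial\Delta_\alpha^0\}$, $\Gamma_2=\{c_1\in\partial\Delta_\alpha^\infty\}$, $\Gamma_3=\{c_2\in\partial\Delta_\alpha^0\}$, $\Gamma_4=\{c_2\in\partial\Delta_\alpha^\infty\}$; each is closed by Proposition \ref{continuousity}. Lemma \ref{atleast1} gives $\Gamma_1\cup\cdots\cup\Gamma_4=\Sigma_\theta$; Lemma \ref{disjoint} gives $\Gamma_1\cap\Gamma_2=\Gamma_3\cap\Gamma_4=\emptyset$; and Lemma \ref{lem:only-one} gives $\Gamma_1\cap\Gamma_3=\Gamma_2\cap\Gamma_4=\emptyset$. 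Hence $\Gamma_1\cup\Gamma_4$ and $\Gamma_2\cup\Gamma_3$ are \emph{disjoint} closed sets covering $\Sigma_\theta$. Connecting $\delta$ (where $c_1\in\partial\Delta_\delta^0$ by Corollary \ref{cor:Siegel-crit}) to $\alpha\in\Gamma$ by a simple curve $\gamma$, connectedness of $\gamma$ forces $\gamma\subset\Gamma_1\cup\Gamma_4$, so $\alpha\notin\Gamma_2\cup\Gamma_3$; since $\alpha\in\Gamma$ means both critical points lie on the boundaries, one on each, we get $c_1\in\partial\Delta_\alpha^0$ and $c_2\in\partial\Delta_\alpha^\infty$. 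You should replace your local/non-disjoint cover by this global disjoint one; then no propagation step is needed.
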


\begin{proof}
(a) Let $\gamma$ be a simple curve connecting $\delta$ with a given $\alpha\in\Gamma$, where $\delta>1$ is the number introduced in Corollary \ref{cor:Siegel-crit}. By the assumption at the end of last section, we have $c_1\in\partial\Delta_\delta^0$. According to Proposition \ref{continuousity}, the $2$-cycle of Siegel disks move continuously as the parameter varies on $\gamma$. By Lemma \ref{disjoint}, we have $c_1\in\partial\Delta_\alpha^0$ and hence $c_2\in\partial\Delta_\alpha^\infty$.

\medskip
(b) Since both critical points are contained in boundaries of the cycle $\{\Delta_\alpha^0,\Delta_\alpha^\infty\}$, it implies that all Fatou components of $f_\alpha$ are iterated eventually onto this cycle of Siegel disks. The statement follows since $f_\alpha^{\circ 2}(\Delta_\alpha^0)=\Delta_\alpha^0$ and $f_\alpha^{\circ 2}(\Delta_\alpha^\infty)=\Delta_\alpha^\infty$.

\medskip
(c) If $f_\alpha^{-2m}(\overline{\Delta_\alpha^0})\cap f_\alpha^{-2n}(\overline{\Delta_\alpha^\infty})\neq\emptyset$ for some $m$, $n\geq 0$, then $\overline{\Delta_\alpha^0}\cap \overline{\Delta_\alpha^\infty}\neq\emptyset$. This contradicts Lemma \ref{disjoint}.
\end{proof}

In the rest of this subsection, we give a description of the combination of the Fatou components which are iterated eventually onto $\Delta_{\alpha}^0$ (resp. $\Delta_{\alpha}^\infty$) under $f^{\circ 2}_{\alpha}$. Specifically, we shall give an address to every component of $\bigcup_{n=0}^{\infty}f^{-2n}_{\alpha}(\Delta_{\alpha}^0\cup \Delta_{\alpha}^\infty)$ by following \cite[\S 0]{Pet96}.

\medskip
Since $f_\alpha^{\circ 2}$ is a rational map of degree $4$, it has $6$ critical points (counted with multiplicity): $c_1$, $c_1'$, $c_1''$ and $c_2$, $c_2'$, $c_2''$, where
\begin{equation}\label{equ:crit}
f_\alpha(c_1')=f_\alpha(c_1'')=c_2 \text{\quad and\quad} f_\alpha(c_2')=f_\alpha(c_2'')=c_1.
\end{equation}
Moreover, $c_1$, $c_1'\in\partial\Delta_\alpha^0$ and $c_2$, $c_2'\in\partial\Delta_\alpha^\infty$. Since $c_1$ and $c_2$ cannot be periodic, we have
\begin{equation}\label{equ:f-c1}
f_\alpha^{\circ n}(c_1)\neq c_2' \text{\quad and\quad} f_\alpha^{\circ n}(c_2)\neq c_1', \text{\quad for any } n\geq 0.
\end{equation}
It is easy to see that there is a Fatou component in $\EC\setminus(\overline{\Delta_\alpha^0}\cup\overline{\Delta_\alpha^\infty})$ attaching at $z\in\partial\Delta_\alpha^0\cup\partial\Delta_\alpha^\infty$ if and only if
\begin{equation}
\begin{split}
z\in\Upsilon_\alpha:=&~\bigcup_{n\geq 0}f_\alpha^{-n}(\{c_1,c_2\})\cap (\partial\Delta_\alpha^0\cup\partial\Delta_\alpha^\infty) \\
=&~\Big(\bigcup_{n\geq 0}f_\alpha^{-2n}(\{c_1,c_1'\})\cap \partial\Delta_\alpha^0\Big) \cup
\Big(\bigcup_{n\geq 0}f_\alpha^{-2n}(\{c_2,c_2'\})\cap \partial\Delta_\alpha^\infty\Big).
\end{split}
\end{equation}
For $z\in\partial\Delta_\alpha^0$, we define
\begin{equation}
\chi(z):=\sharp\,\{U:\,\overline{U}\cap \overline{\Delta_\alpha^0}=\{z\}, \text{ where } U \text{ is a Fatou component of } f_\alpha\}.
\end{equation}
Similarly, one can define $\chi(z)$ for all $z\in\partial\Delta_\alpha^\infty$. Therefore, $\chi(z)\geq 1$ if and only if $z\in\Upsilon_\alpha$.

\begin{lem}\label{lem:confir}
Let $\alpha\in\Gamma$. Then one of the following three cases happens:
\begin{enumerate}
\item $f_\alpha^{\circ (2m+1)}(c_1)=c_2$ for some $m\geq 0$ and $f_\alpha^{\circ (2n+1)}(c_2)\neq c_1$ for any $n\geq 0$. Then $\chi(z)=1$ if $z\in\big(\bigcup_{k=0}^{2m}f_\alpha^{-k}(c_2)\big)\cap\Upsilon_\alpha$ and $\chi(z)=3$ if $z\in\bigcup_{k\geq 0}f_\alpha^{-k}(c_1)=\Upsilon_\alpha\setminus \bigcup_{k=0}^{2m}f_\alpha^{-k}(c_2)$;
\item $f_\alpha^{\circ (2m+1)}(c_2)=c_1$ for some $m\geq 0$ and $f_\alpha^{\circ (2n+1)}(c_1)\neq c_2$ for any $n\geq 0$. Then $\chi(z)=1$ if $z\in\big(\bigcup_{k=0}^{2m}f_\alpha^{-k}(c_1)\big)\cap\Upsilon_\alpha$ and $\chi(z)=3$ if $z\in\bigcup_{k\geq 0}f_\alpha^{-k}(c_2)=\Upsilon_\alpha\setminus \bigcup_{k=0}^{2m}f_\alpha^{-k}(c_1)$;
\item $f_\alpha^{\circ (2n+1)}(c_1)\neq c_2$ and $f_\alpha^{\circ (2n+1)}(c_2)\neq c_1$ for any $n\geq 0$. Then $\chi(z)=1$ for all $z\in\Upsilon_\alpha$.
\end{enumerate}
\end{lem}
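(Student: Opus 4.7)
The strategy is to set up a recursion for $\chi$ along the dynamics of $f_\alpha$, evaluate it at the critical points $c_1$ and $c_2$ in each of the three cases, and then propagate the values to all of $\Upsilon_\alpha$ via the forward orbit structure.

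First I would establish the local geometry at the critical points. By Riemann--Hurwitz, since $f_\alpha|_{\Delta_\alpha^0}$ is conformal and $c_1$ is the only critical point of $f_\alpha$ on $\partial\Delta_\alpha^0$ (Lemma \ref{lem:Gamma-crit}(a)), $f_\alpha^{-1}(\Delta_\alpha^\infty)$ decomposes as $\Delta_\alpha^0$ together with a Jordan domain $W_0$ mapping conformally onto $\Delta_\alpha^\infty$, and a local-bijectivity argument identical to the one in the proof of Lemma \ref{lem:only-one} yields $\overline{W_0}\cap\overline{\Delta_\alpha^0}=\{c_1\}$; symmetrically one obtains $W_\infty\subset f_\alpha^{-1}(\Delta_\alpha^0)$ attached to $\overline{\Delta_\alpha^\infty}$ only at $c_2$. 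For any Fatou component $U$ with $\overline{U}\cap\overline{\Delta_\alpha^\infty}=\{v\}$ and any $f_\alpha$-preimage $V$ of $U$, one checks that $\partial V\cap\partial\Delta_\alpha^0$ is contained in $f_\alpha^{-1}(v)\cap\partial\Delta_\alpha^0$, which is a single point because $f_\alpha|_{\partial\Delta_\alpha^0}$ is a bijection onto $\partial\Delta_\alpha^\infty$. This yields the recursion
\[
\chi(z)=\chi(f_\alpha(z)) \text{ for } z\in\Upsilon_\alpha\setminus\{c_1,c_2\},\qquad \chi(c_i)=1+2\chi(f_\alpha(c_i)) \text{ for } i=1,2,
\]
with $\chi(z)=0$ for $z\notin\Upsilon_\alpha$; the factor $2$ at $c_i$ records the $2$-to-$1$ ramification, and the extra $1$ is the contribution of $W_0$ or $W_\infty$.

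Next I would evaluate $\chi(c_1),\chi(c_2)$ case by case. In Case (a), the minimality of $2m+1$ together with the non-periodicity of $c_1$ on a bounded-type Siegel boundary imply that $f_\alpha^{\circ k}(c_1)\notin\{c_1,c_2\}$ for $k=1,\ldots,2m$, so iterating the first recursion gives $\chi(f_\alpha(c_1))=\chi(c_2)$. The forward orbit of $c_2$ avoids $\{c_1,c_2\}$ by the Case (a) hypothesis combined with non-periodicity, so each iterate lies outside $\Upsilon_\alpha$ and $\chi(f_\alpha(c_2))=0$; hence $\chi(c_2)=1$ and $\chi(c_1)=3$. Case (b) is symmetric, and in Case (c) both forward orbits avoid $\{c_1,c_2\}$, yielding $\chi(c_1)=\chi(c_2)=1$. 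For the propagation, for any $z\in\Upsilon_\alpha$ let $k(z)=\min\{k\geq 0:f_\alpha^{\circ k}(z)\in\{c_1,c_2\}\}$; the intermediate orbit is non-critical, so iterating yields $\chi(z)=\chi(f_\alpha^{\circ k(z)}(z))$. In Case (a), using $f_\alpha^{\circ(2m+1)}(c_1)=c_2$ and that $f_\alpha$ acts bijectively on $\partial\Delta_\alpha^0\cup\partial\Delta_\alpha^\infty$ (so backward orbits inside $\Upsilon_\alpha$ are unique), $\bigcup_{k\geq 0}f_\alpha^{-k}(c_1)\cap\Upsilon_\alpha$ equals the set of $z$ with first-hit $c_1$ (giving $\chi=3$), while the complement inside $\Upsilon_\alpha$ equals $\bigcup_{k=0}^{2m}f_\alpha^{-k}(c_2)\cap\Upsilon_\alpha$ (giving $\chi=1$): indeed, any $z$ with first-hit $c_2$ at step $\geq 2m+1$ would, by the uniqueness of the Siegel-boundary backward orbit, satisfy $f_\alpha^{\circ(k(z)-2m-1)}(z)=c_1$, contradicting first-hit $c_2$. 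Cases (b) and (c) follow by the same mechanism.

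The main obstacle is the rigorous justification of the critical-point recursion $\chi(c_i)=1+2\chi(f_\alpha(c_i))$, specifically the assertion that each single-attachment Fatou component at $v_i:=f_\alpha(c_i)$ lifts to exactly two single-attachment Fatou components at $c_i$. The count of $2$ comes from $c_i$ being the unique $f_\alpha$-preimage of $v_i$ with local degree $2$, and the single-attachment property of each lift $V$ follows from the containment $\partial V\cap\partial\Delta_\alpha^0\subseteq f_\alpha^{-1}(v_i)\cap\partial\Delta_\alpha^0=\{c_i\}$ together with the bijectivity of $f_\alpha$ on the Siegel boundaries; the exhaustiveness, namely that $W_0$ accounts for every remaining attached component at $c_i$ not arising in this way, follows from $f_\alpha^{-1}(\Delta_\alpha^\infty)$ having only the two components $\Delta_\alpha^0$ and $W_0$.
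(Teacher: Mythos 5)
Your proof is correct and follows essentially the same approach as the paper: both use the non-periodicity of $c_1,c_2$ and the local degree of the critical point to compute $\chi(c_1),\chi(c_2)$, then propagate these values along the $f_\alpha$-orbit. You have simply made explicit the recursion $\chi(z)=\chi(f_\alpha(z))$ (for non-critical $z$) and $\chi(c_i)=1+2\chi(f_\alpha(c_i))$, and the first-hit propagation, which the paper compresses into ``the rest statement follows immediately.''
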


\begin{proof}
(a) If $f_\alpha^{\circ (2m+1)}(c_1)=c_2$ for some $m\geq 0$, then $f_\alpha^{\circ (2n+1)}(c_2)$ cannot be $c_1$ for any $n\geq 0$ since otherwise, $c_1$ and $c_2$ would be periodic, which is impossible. Moreover, we have $f_\alpha^{\circ n}(c_1)\neq c_1$ and $f_\alpha^{\circ n}(c_2)\neq c_2$ for any $n\geq 1$. This implies that $\chi(c_2)=1$. Since the local degree of $f_\alpha^{\circ (2m+1)}$ at $c_1$ is $2$, we have $\chi\big(f_\alpha^{\circ k}(c_1)\big)=1$ for $1\leq k\leq 2m+1$ and $\chi(c_1)=3$. The rest statement follows immediately.

\medskip
(b) The proof is completely similar to (a).

\medskip
(c) If $f_\alpha^{\circ (2n+1)}(c_1)\neq c_2$ and $f_\alpha^{\circ (2n+1)}(c_2)\neq c_1$ for any $n\geq 0$, then $\chi(c_1)=\chi(c_2)=1$. This implies that $\chi(z)=1$ for all $z\in\Upsilon_\alpha$.
\end{proof}

In the following, we focus our attention on the combination of the components of $\bigcup_{n\geq 0}f_\alpha^{-2n}(\Delta_{\alpha}^0)$. The configuration of the components of $\bigcup_{n\geq 0}f_\alpha^{-2n}(\Delta_{\alpha}^\infty)$ can be described in the same way.
Note that $f^{-2}_{\alpha}(\Delta_{\alpha}^0)$ consists of $4$ components: $\Delta_{\alpha}^0$ and $3$ other components $U_0^0$, $U_0^1$ and $U_0^2$, where $f_\alpha(U_0^0)=\Delta_\alpha^\infty$, $U_0^0$ and $U_0^1$ are attached to $\partial\Delta_{\alpha}^0$ at $c_1$ and $c_1'$ respectively, and $U_0^2$ is attached to $\partial U_0^0$ at $c_1''$.
Based on Lemma \ref{lem:confir}, there are following $3$ cases (see Figure \ref{Fig:map}):
\begin{itemize}
\item[(i)] If $f_\alpha(c_1)=c_2$, then $c_1=c_1'=c_1''$ and $\chi(c_1)=3$;
\item[(ii)] If $f_\alpha^{\circ (2m+1)}(c_1)=c_2$ for some $m\geq 1$, then $c_1$, $c_1'$, $c_1''$ are pairwise different, $\chi(c_1)=3$ and $\chi(c_1')=1$;
\item[(iii)] If $f_\alpha^{\circ (2n+1)}(c_1)\neq c_2$ for any $n\geq 0$, then $c_1$, $c_1'$, $c_1''$ are pairwise different and $\chi(c_1)=\chi(c_1')=1$.
\end{itemize}

\begin{figure}[htbp]
  \setlength{\unitlength}{1mm}
    \centering
  \includegraphics[width=0.92\textwidth]{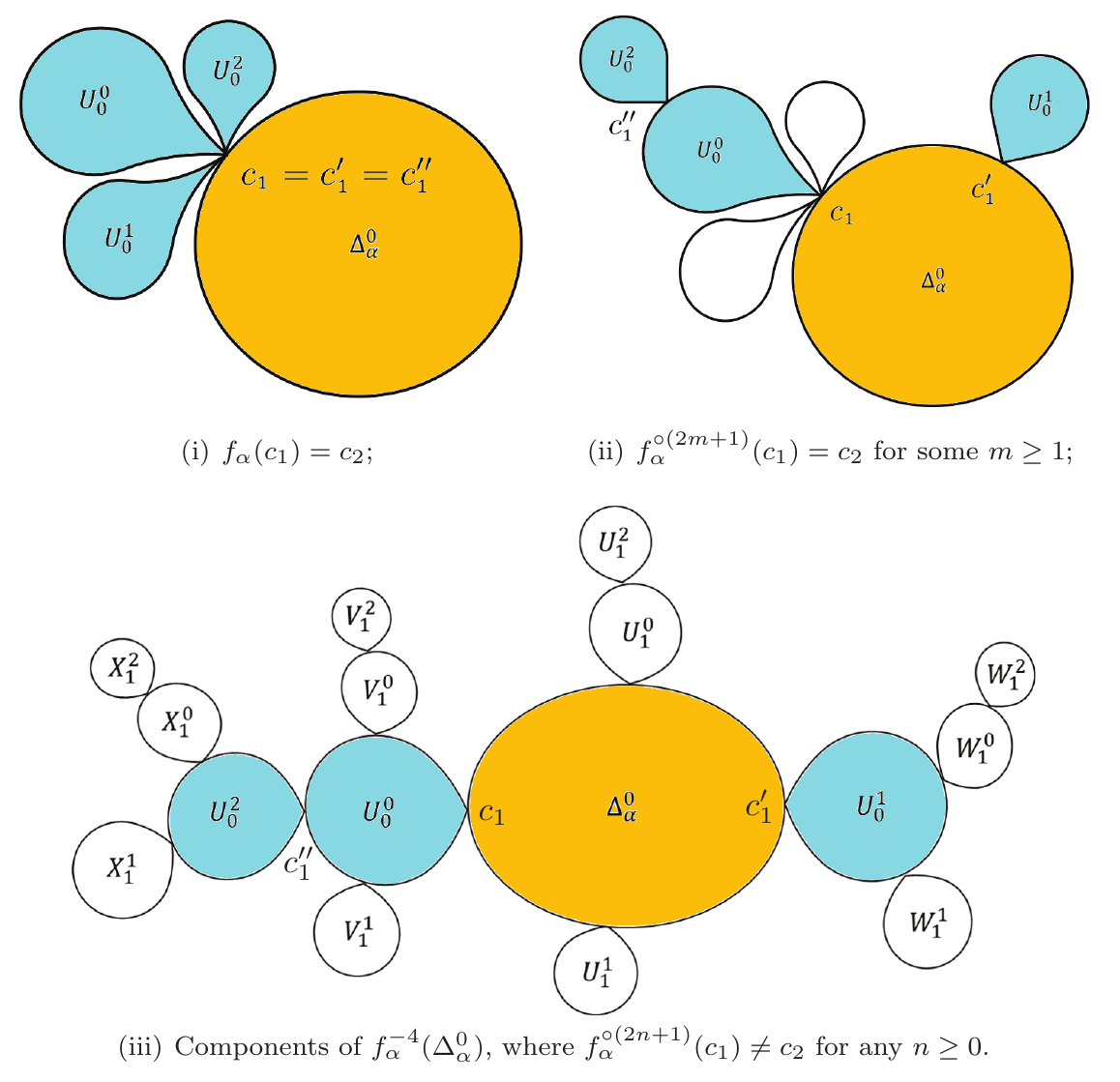}
  \caption{The locations of some preimages of $\Delta_{\alpha}^0$ under $f_\alpha^{\circ 2}$.}
  \label{Fig:map}
\end{figure}

In the following, we only give a detailed description of the locations of the components of $\bigcup_{n\geq 0}f_\alpha^{-2n}(\Delta_{\alpha}^0)$ of Case (iii), i.e., the forward orbit of $c_1$ does not touch $c_2$. The rest two cases can be handled completely similarly. For $t\in\{0,1,2\}$, $f_\alpha^{-2}(U_0^t)$ consists of $4$ components $U_{1}^t$, $V_{1}^t$, $W_{1}^t$ and $X_{1}^t$ (see Figure \ref{Fig:map}), where
\begin{itemize}
\item $\{U_1^0,U_1^1\}$, $\{V_1^0,V_1^1\}$, $\{W_1^0,W_1^1\}$, $\{X_1^0,X_1^1\}$ are attached to $\Delta_\alpha^0$, $U_0^0$, $U_0^1$ and $U_0^2$ respectively; and
\item $U_1^2$, $V_1^2$, $W_1^2$, $X_1^2$ are attached to $U_1^0$, $V_1^0$, $W_1^0$, $X_1^0$ respectively.
\end{itemize}
For integers $s\geq 1$ and $t\in\{0,1,2\}$, let $U_s^t$, $V_s^t$, $W_s^t$ and $X_s^t$ be the unique components of $f_\alpha^{-2s}(U_0^t)$ such that
\begin{itemize}
\item $\{U_s^0,U_s^1\}$, $\{V_s^0,V_s^1\}$, $\{W_s^0,W_s^1\}$, $\{X_s^0,X_s^1\}$ are attached to $\Delta_\alpha^0$, $U_0^0$, $U_0^1$ and $U_0^2$ respectively; and
\item $U_s^2$, $V_s^2$, $W_s^2$, $X_s^2$ are attached to $U_s^0$, $V_s^0$, $W_s^0$, $X_s^0$ respectively.
\end{itemize}

Inductively, for any $m\geq 2$ and sequences $(s_1,\cdots, s_m)$, $(t_1,\cdots,t_m)$, where $s_i\geq 1$ and $t_i\in\{0,1,2\}$ with $1\leq i\leq m$, let $U_{s_1,\cdots,s_m}^{t_1,\cdots,t_m}$, $V_{s_1,\cdots,s_m}^{t_1,\cdots,t_m}$, $W_{s_1,\cdots,s_m}^{t_1,\cdots,t_m}$ and $X_{s_1,\cdots,s_m}^{t_1,\cdots,t_m}$ be the unique components of $f_\alpha^{-2(s_1+\cdots +s_m+1)}(\Delta_\alpha^0)$ satisfying
\begin{itemize}
\item $\{U_{s_1,\cdots,s_{m-1},s_m}^{t_1,\cdots,t_{m-1},0},U_{s_1,\cdots,s_{m-1},s_m}^{t_1,\cdots,t_{m-1},1}\}$ are attached to $U_{s_1,\cdots,s_{m-1}}^{t_1,\cdots,t_{m-1}}$;
\item $U_{s_1,\cdots,s_{m-1},s_m}^{t_1,\cdots,t_{m-1},2}$ is attached to $U_{s_1,\cdots,s_{m-1},s_m}^{t_1,\cdots,t_{m-1},0}$;
\item The above relations hold similarly for $V_{s_1,\cdots,s_m}^{t_1,\cdots,t_m}$, $W_{s_1,\cdots,s_m}^{t_1,\cdots,t_m}$ and $X_{s_1,\cdots,s_m}^{t_1,\cdots,t_m}$;
\item There are following mapping relations:
\begin{equation}
f_\alpha^{\circ 2}(Y_{s_1,s_2,\cdots,s_m}^{t_1,t_2,\cdots,t_m})=
\left\{
\begin{array}{ll}
U_{s_1-1,s_2\cdots,s_m}^{t_1,t_2,\cdots,t_m},  &~~~~~~~\text{if}~s_1\geq 2, \\
V_{s_2,\cdots,s_m}^{t_2,\cdots,t_m},  &~~~~~~~\text{if}~s_1=1 \text{ and } t_1=0, \\
W_{s_2,\cdots,s_m}^{t_2,\cdots,t_m},  &~~~~~~~\text{if}~s_1=1 \text{ and } t_1=1, \\
X_{s_2,\cdots,s_m}^{t_2,\cdots,t_m},  &~~~~~~~\text{if}~s_1=1 \text{ and } t_1=2,
\end{array}
\right.
\end{equation}
where $Y=U$, $V$, $W$ or $X$.
\end{itemize}

Based on the above setting, every connected component of $\bigcup_{n\geq 0}f_\alpha^{-2n}(\Delta_{\alpha}^0)$ which is different from $\Delta_\alpha^0$, $U_0^0$, $U_0^1$ and $U_0^2$, corresponds to a unique address $\{(s_1,\cdots,s_m),(t_1,\cdots,t_m)\}$ (together with the marking $U$, $V$, $W$ or $X$), where $s_i\geq 1$ and $t_i\in\{0,1,2\}$. Moreover, the relative positions of these components are uniquely determined by the rotation number $\theta$. Such kind of addresses can be marked also for the components of $\bigcup_{n\geq 0}f_\alpha^{-2n}(\Delta_{\alpha}^\infty)$ similarly. The locations of the Fatou components of $f_\alpha$ are uniquely determined by these addresses and the rotation number $\theta$. We shall use these information to study the rigidity of $f_\alpha$ when $\alpha$ moves in $\Gamma$.

\subsection{Lebesgue measure of the Julia sets}

The main aim in this subsection is to prove the following result:

\begin{lem}\label{Juliaset}
For each $\alpha \in \Gamma$, the Julia set of $f_{\alpha}$ has zero Lebesgue measure.
\end{lem}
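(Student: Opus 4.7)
The plan is to identify the post-critical set $P(f_\alpha)$ as a concrete Lebesgue-null set, and then invoke the principle that a rational map whose post-critical set on the Julia set is tame has a Julia set of measure zero. For $\alpha\in\Gamma$, Lemma~\ref{lem:Gamma-crit}(a) places $c_1\in\partial\Delta_\alpha^0$ and $c_2\in\partial\Delta_\alpha^\infty$. Because $f_\alpha^{\circ 2}$ restricted to each Siegel boundary is topologically conjugate to the rigid rotation $R_\theta$ with $\theta$ irrational, the forward orbit of $c_1$ under $f_\alpha^{\circ 2}$ is dense in $\partial\Delta_\alpha^0$, and similarly for $c_2$. Combined with $f_\alpha(\partial\Delta_\alpha^0)=\partial\Delta_\alpha^\infty$ and the fact that $\{c_1,c_2\}=\Crit(f_\alpha)$, this yields $P(f_\alpha)=\partial\Delta_\alpha^0\cup\partial\Delta_\alpha^\infty$. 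By Lemma~\ref{disjoint} this set is a disjoint union of two $K$-quasicircles, so its Hausdorff dimension is strictly less than $2$, and in particular $|P(f_\alpha)|=0$.

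It remains to show $|J(f_\alpha)\setminus P(f_\alpha)|=0$. The cleanest implementation is a Shishikura-style trans-quasiconformal surgery on the $2$-cycle of bounded-type Siegel disks: after a quasiconformal deformation supported in the grand orbit of $\Delta_\alpha^0\cup\Delta_\alpha^\infty$, both Siegel disks become super-attracting basins and one obtains a postcritically finite hyperbolic rational map $g$ of degree two. Since $J(g)$ has Lebesgue measure zero by the classical Fatou--Sullivan theorem for hyperbolic rational maps, and the surgery is a quasiconformal conjugacy on the complement of the grand orbit of the Siegel cycle, the measure-zero property transfers back to $J(f_\alpha)\setminus P(f_\alpha)$. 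An alternative route is a direct Yoccoz-puzzle argument based on the explicit address scheme of Section~\ref{subsec:address}: one shows that puzzle pieces shrink to points with uniformly bounded eccentricity, yielding both local connectivity and measure zero simultaneously.

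The main obstacle is that both critical points lie on $P(f_\alpha)$, so Mañ\'e-style hyperbolic expansion is unavailable in any neighborhood of the post-critical set, and a naive Koebe-distortion argument for pullback neighborhoods degenerates near the Siegel boundaries. The way through uses the bounded-type hypothesis on $\theta$ in an essential manner: the $K$-quasicircle structure of $\partial\Delta_\alpha^0,\partial\Delta_\alpha^\infty$ furnished by Theorem~\ref{zhang11}, together with the uniform quasisymmetric conjugacy of $f_\alpha^{\circ 2}$ on each Siegel boundary to a rigid rotation, provides the bounded-geometry input needed either to perform the trans-quasiconformal surgery with uniformly bounded dilatation, or to secure bounded-eccentricity shrinking of pullback puzzle pieces around a prospective density point of $J(f_\alpha)\setminus P(f_\alpha)$. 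The analytic heart of the argument is the verification that the resulting conjugacy is absolutely continuous on $J(f_\alpha)$, which is precisely the point where the bounded type assumption on $\theta$ cannot be avoided.
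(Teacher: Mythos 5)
The proposal correctly identifies $P(f_\alpha)=\partial\Delta_\alpha^0\cup\partial\Delta_\alpha^\infty$ and correctly observes that this set, being a union of quasicircles, has zero Lebesgue measure; both facts are used (implicitly or explicitly) in the paper's argument. The proposal also correctly recognizes that the difficult step is showing $|J(f_\alpha)\setminus P(f_\alpha)|=0$ and that Ma\~n\'e-style expansion is unavailable because both critical points lie in $P(f_\alpha)$. However, the main route you propose for this step does not work.

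You propose a ``Shishikura-style trans-quasiconformal surgery'' that turns the $2$-cycle of Siegel disks into super-attracting basins, producing a hyperbolic (even postcritically finite) quadratic rational map $g$, and then transferring $|J(g)|=0$ back through a quasiconformal conjugacy. There is no such surgery. First, the modulus of the multiplier of a periodic cycle is a quasiconformal-conjugacy invariant: if $\phi$ is quasiconformal and $\phi\circ f_\alpha\circ\phi^{-1}=g$ is rational, then $g$ still has an irrationally indifferent $2$-cycle of the same rotation number, so $g$ cannot be hyperbolic. Deforming only the Beltrami coefficient on the grand orbit of $\Delta_\alpha^0\cup\Delta_\alpha^\infty$ cannot change this, since any $f_\alpha^{\circ 2}$-invariant Beltrami form on the Siegel disk (equivalently, an $R_\theta$-invariant Beltrami form on $\D$) conjugates $R_\theta$ to $R_\theta$, leaving the multiplier $e^{2\pi\ii\theta}$ fixed. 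Second, a surgery that actually replaces the degree-$1$ rotation on $\Delta_\alpha^0$ by a degree-$\ge 2$ super-attracting model would raise the global topological degree of the quasiregular map, so the straightened map could not be a quadratic rational map. Third, even granting a trans-quasiconformal (David) deformation, David homeomorphisms are not absolutely continuous in general, so measure-zero sets need not be preserved in the required direction; this is precisely the kind of absolute-continuity issue you flag at the end, but you treat it as a technicality rather than as the reason the route breaks. The alternative Yoccoz-puzzle route is mentioned in one sentence without any construction of a puzzle for this family (the extra preimages $c_1'',c_2''$ and the ``attached'' Fatou chains make this nontrivial), so it cannot be assessed as a proof.

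The paper proves the lemma differently. It invokes the Lyubich--McMullen dichotomy: either $J(f_\alpha)=\EC$ or almost every point of $J(f_\alpha)$ has orbit accumulating on $P(f_\alpha)$. It then constructs a quasi-Blaschke model $F$ for $f_\alpha^{\circ 2}$ by reflecting the exterior dynamics of $\Delta_\alpha^0$ across $\partial\D$ via a global quasiconformal conjugacy $\psi$ with $\psi(\Delta_\alpha^0)=\D$, and establishes a hyperbolic-metric expansion estimate \eqref{contracts} for $F^{-1}$ in a truncated cone $\Omega_{\varepsilon,r}^c$ at the critical point $c=1\in\partial\D$. Because a typical orbit tending to $\partial\D$ must return infinitely often to such a cone, one pulls back a triple $(A^n,B^n,C^n)$ with $F(C^n)\subset\D$ and uniformly bounded geometry along the orbit, obtaining (via Koebe distortion on the conformal pullbacks and shrinking diameters from \eqref{contracts}) a definite proportion of each small scale around the base point that escapes into the Fatou set. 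This shows the base point is not a Lebesgue density point of the bad set, hence that set has measure zero. This is essentially the argument of McMullen and Zhang, and it is genuinely different from a surgery-to-hyperbolic strategy: rather than transporting $J(f_\alpha)$ to a hyperbolic Julia set (which is impossible here), it proves directly that the set of orbits accumulating on the Siegel boundary is porous.
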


To prove Lemma \ref{Juliaset}, we shall prove that the set of points whose forward orbits under $f_\alpha^{\circ 2}$ tend to the post-critical set
\begin{equation}
P(f_\alpha^{\circ 2})=P(f_\alpha)=\partial\Delta_\alpha^0\cup\partial\Delta_\alpha^\infty
\end{equation}
has zero Lebesgue measure. Since for almost all points in the Julia set of $f_\alpha^{\circ 2}$, the forward orbits tend to the post-critical set, we conclude that the whole Julia set has zero Lebesgue measure.

For $f_\alpha^{\circ 2}$, we first provide a quasi-Blaschke product model $F$, such that the unit disk $\D$ corresponds to $\Delta_\alpha^0$ (a similar model can be established such that $\D$ corresponds to $\Delta_\alpha^\infty$). For any point $z$ whose forward orbit tending to $\partial\D$ by $F$, we pull back the geometry near a critical point on $\partial\D$ to a neighborhood of $z$ by a bounded distortion. This implies that $z$ is a not a Lebesgue density point of the points whose forward orbits tending to $\partial\D$. The main idea of the proof of Lemma \ref{Juliaset} is inspired by \cite{McM98b} and \cite{Zha08c}.

\medskip
For rational maps, the typical behaviors of the forward orbits of the points are characterized in the following result. See \cite{Lyu83b} and also \cite[\S 3.3]{McM94b}.
\begin{lem}\label{lem:typical}
Let $f$ be a rational map of degree at least two. Then either
\begin{itemize}
 \item The Julia set $J(f)$ is equal to the whole Riemann sphere; or
 \item The spherical distance between $f^{\circ k}(z)$ and the post-critical set $P(f)$ tends to $0$ for almost every $z\in J(f)$ as $k\to \infty$.
\end{itemize}
\end{lem}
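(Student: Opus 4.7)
The plan is to establish the dichotomy by a Lebesgue density argument ruling out density points of the set of points whose orbits stay bounded away from $P(f)$. Assume $J(f)\neq\EC$ and, for each $\delta>0$, set
\[
E_\delta:=\{z\in J(f):d(f^{\circ n}(z),P(f))>\delta\text{ for infinitely many }n\geq 0\}.
\]
The ``bad set'' $E=\{z\in J(f):d(f^{\circ n}(z),P(f))\not\to 0\}$ equals $\bigcup_{m\geq 1}E_{1/m}$, so it suffices to prove $\area(E_\delta)=0$ for every $\delta>0$. The hypothesis $J(f)\neq\EC$ enters only through the classical fact that then $J(f)$ has empty interior.

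\textbf{Univalent pullbacks along the orbit.} Fix $\delta>0$, $z\in E_\delta$, and a subsequence $n_k\to\infty$ with $d(f^{\circ n_k}(z),P(f))>\delta$. Since $P(f)$ is forward invariant and contains every forward orbit of a critical value, the ball $B_k:=B(f^{\circ n_k}(z),\delta/2)$ meets no critical value of any iterate of $f$; hence $B_k$ supports a univalent inverse branch $\phi_k$ of $f^{\circ n_k}$ with $\phi_k(f^{\circ n_k}(z))=z$. Extracting a further subsequence, $f^{\circ n_k}(z)\to w$ with $d(w,P(f))\geq\delta$, so for large $k$ all $\phi_k$ are defined on the fixed disk $B:=B(w,\delta/4)$. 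By the Koebe distortion theorem, $\phi_k|_B$ has distortion bounded by a constant $C=C(\delta)$, and hence the image $V_k:=\phi_k(B)$ has uniformly controlled (quasi-round) shape. The standard shrinking lemma for univalent inverse branches along a Julia-set orbit then gives $\diam(V_k)\to 0$.

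\textbf{Density transfer and contradiction.} The forward invariance $f^{\circ n}(E_\delta)\subseteq E_\delta$ follows immediately from the definition, and translates contrapositively to the inclusion $\phi_k(B\cap E_\delta^c)\subseteq V_k\cap E_\delta^c$. Combined with the bounded distortion, one obtains a constant $c>0$ independent of $k$ such that
\[
\frac{\area(V_k\cap E_\delta^c)}{\area(V_k)}\geq c\,\frac{\area(B\cap E_\delta^c)}{\area(B)}.
\]
If $z$ were a Lebesgue density point of $E_\delta$, the left-hand side would tend to $0$ (since $V_k\to\{z\}$ while remaining approximately round), forcing $\area(B\cap E_\delta^c)=0$. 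Then the fixed open ball $B$ would lie in $\overline{E_\delta}\subseteq J(f)$, contradicting $\Int(J(f))=\emptyset$. By the Lebesgue density theorem $\area(E_\delta)=0$, and taking $\delta=1/m\to 0$ yields $\area(E)=0$, which is the second alternative of the lemma.

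\textbf{Main obstacle.} The only nontrivial technical input is the shrinking lemma, which requires showing that the $\phi_k$ cannot accumulate to a non-constant univalent limit: if some subsequence $\phi_{k_j}\to\phi$ with $\phi$ univalent on $B$, then the disks $\phi_{k_j}(B)$ would cover a neighborhood of $z=\phi(w)$ on which the iterates $f^{\circ n_{k_j}}$ are uniformly bounded, placing $z$ in $F(f)$ and contradicting $z\in J(f)$. Hence every subsequential limit is constant, and equicontinuity gives $\diam(V_k)\to 0$; this is the one step where the location of $z$ in the Julia set enters in an essential way.
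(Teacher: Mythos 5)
Your argument is correct and is essentially the standard proof of this classical dichotomy (Lyubich; McMullen, \S 3.3 of \emph{Complex Dynamics and Renormalization}), which is exactly what the paper cites in place of a proof: univalent inverse branches of $f^{\circ n_k}$ on disks avoiding $P(f)$, Koebe distortion to transfer Lebesgue density, and shrinking of the branch images because the base point lies in $J(f)$. The only loose phrase is ``placing $z$ in $F(f)$'': normality of the subsequence $f^{\circ n_{k_j}}$ alone does not put $z$ in the Fatou set, but the intended contradiction follows at once from the blow-up property of the Julia set (for any open $N\ni z$ meeting $J(f)$, the images $f^{\circ n}(N)$ eventually cover every compact set avoiding the exceptional points, so they cannot all remain inside the fixed ball $B$).
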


In the following we assume that $f_\alpha^{\circ 2}$ has exactly two different critical points $c_1$ and $c_1'$ on $\partial\Delta_\alpha^0$. The case that $c_1=c_1'$ can be treated completely similarly (This case happens only when $f_\alpha(c_1)=c_2$, see \S\ref{subsec:address}). We now define the model map $F$.
Let $Z_0^0$ be a quasidisk in $\C\setminus\overline{\D}$ which is attached to $\partial\D$ at $c=1$ such that $\partial Z_0^0$ is smooth in a neighborhood of $1$ except at $1$ itself and the two angles formed by $\partial Z_0^0$ and $\partial\D$ are both $\frac{\pi}{3}$. Let $\psi: \EC\setminus (\Delta^{0}_{\alpha}\cup U_0^0)\to\EC\setminus (\D\cup Z_0^0)$ be a continuous map such that
\begin{equation}
\psi: \EC\setminus (\overline{\Delta^{0}_{\alpha}\cup U_0^0})\to\EC\setminus (\overline{\D\cup Z_0^0})
\end{equation}
is conformal and satifies $\psi(\infty)=\infty$, $\psi(c_1)=1$, $\psi(\partial\Delta_\alpha^0)=\partial\D$ and $\psi(\partial U_0^0)=\partial Z_0^0$.
Since $\partial\Delta_\alpha^0$ and $\partial Z_0^0$ are both quasicircles, $\psi$ can be extend to a quasiconformal mapping $\psi:\EC\to\EC$ such that $\psi(\Delta_\alpha^0)=\D$ and $\psi(U_0^0)=Z_0^0$.

For $z\in\EC$, we use $z^*=1/\overline{z}$ to denote the symmetric image of $z$ about the unit circle. Let $Z^*=\{z^*:z\in Z\}$, where
$$Z:=\psi(U_0^0\cup U_0^1\cup U_0^2).$$
We define the model map
\begin{equation}
F(z):=\left\{
\begin{array}{ll}
\psi\circ f_\alpha^{\circ 2}\circ \psi^{-1} (z) & \text{\quad if } |z|\ge 1,\\
(\psi\circ f_\alpha^{\circ 2}\circ \psi^{-1} (z^*))^* &\text{\quad if } |z|< 1.
\end{array}
\right.
\end{equation}
Then $F:\EC\to\EC$ is a quasiregular map and $F: \EC\setminus(Z\cup Z^*) \to \EC$ is holomorphic. Moreover, $F$ has two double critical points $1$ and $\psi(c_1')$ on $\partial\D$.

\medskip
For $z\in\C$, we use $\D_r(z)$ to denote the Euclidean disk centered at $z$ with radius $r>0$. Take $0<\varepsilon<1/12$. Let $\ell_1$ and $\ell_1'$ be the two rays starting from the critical point $c=1$ of $F$ such that the angles between $\partial\D$ and $\ell_1$, $\partial\D$ and $\ell_1'$ are both equal to $\varepsilon \pi$. Let $S_{\varepsilon}^{c}$ be the open cone spanned by $\ell_1$ and $\ell_1'$ which attaches at $c$ from the outside of $\D$.

Note that $\psi:\EC\to\EC$ is a quasiconformal homeomorphism satisfying $\psi(\partial\Delta_\alpha^0)=\partial\D$ and $\psi(c_1)=1$. This implies that the boundaries of $\psi(U_0^1)$ and $\psi(U_0^2)$ are quasicircles. Define
\begin{equation}
\Omega_{\varepsilon,r}^c:= S_{\varepsilon}^{c}\cap \D_r(c) \cap (\C\setminus(\overline{Z\cup \D}))
\text{\quad and\quad}
\Omega:=\C\setminus \big(\overline{\D}\cup\psi(\overline{\Delta_\alpha^\infty})\big).
\end{equation}
Note that the two angles formed by $\partial Z_0^0$ and $\partial\D$ are both $\frac{\pi}{3}$. There exists a small $r>0$ such that $\D_r(c)\cap \psi(U_0^1\cup U_0^2)=\emptyset$ and $\Omega_{\varepsilon,r}^c$ consists of two simply connected domains. In the following, we always assume that $r>0$ is small such that the above properties are satisfied.
See the picture in Figure \ref{contract} on the left.

\begin{figure}[htbp]
  \setlength{\unitlength}{1mm}
  \centering
  \includegraphics[width=0.96\textwidth]{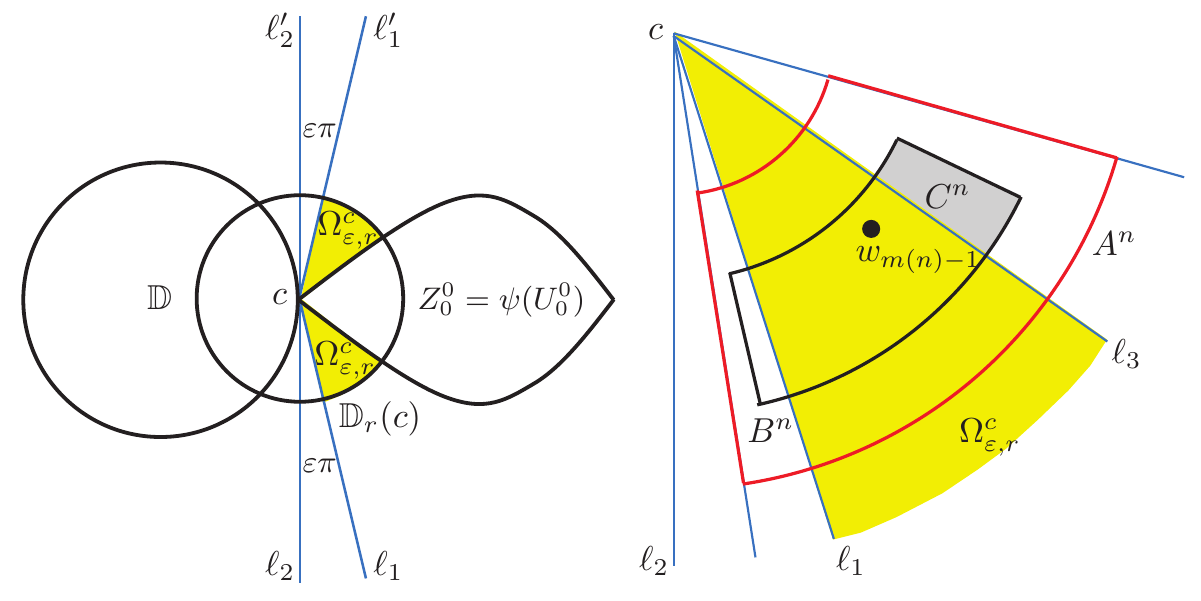}
  \caption{Left: The contraction region $\Omega_{\varepsilon,r}^c$ of $F^{-1}$. Right: Construction of the sequence $(A^n,B^n,C^n,w_{m(n)-1})$.}
  \label{contract}
\end{figure}

Let $\rho_{\Omega}(z)|dz|$ be the hyperbolic metric on $\Omega$. By a similar proof to \cite[Lemma 3.2]{Zha08c}, for the given small $r>0$, there exists $\nu=\nu(\varepsilon,r)>0$ such that for any $z\in \Omega_{\varepsilon,r}^c$, we have
\begin{equation}\label{contracts}
\rho_\Omega(F(z))|F'(z)|\ge(1+\nu)\rho_\Omega(z).
\end{equation}

\begin{proof}[{Proof of Lemma \ref{Juliaset}}]
We only consider the case that $f_\alpha^{\circ 2}$ has exactly two different critical points $c_1$ and $c_1'$ on $\partial\Delta_\alpha^0$. The case $c_1=c_1'$ corresponds to $f_\alpha(c_1)=c_2$, which can be proved completely similarly.
To prove that $J(f_\alpha)$ has zero Lebesgue measure, by Lemma \ref{lem:typical}, it suffices to prove that
\begin{equation}
\Pi:=\{z\in J(f_\alpha): f_\alpha^{\circ n}(z) \text{ tends to } P(f_\alpha) \text{ as } n\to\infty\}=\Pi_0\cup\Pi_\infty
\end{equation}
has zero Lebesgue measure, where
\begin{equation}
\begin{split}
\Pi_0:=&~\{z\in J(f_\alpha^{\circ 2}): f_\alpha^{\circ (2n)}(z) \text{ tends to } \partial\Delta_\alpha^0 \text{ as } n\to\infty\}, \text{ and} \\
\Pi_\infty:=&~\{z\in J(f_\alpha^{\circ 2}): f_\alpha^{\circ (2n)}(z) \text{ tends to } \partial\Delta_\alpha^\infty \text{ as } n\to\infty\}.
\end{split}
\end{equation}
In the following we prove that $\Pi_0$ has zero Lebesgue measure. The case for $\Pi_\infty$ is completely similar. Since the restriction of the quasiconformal map $\psi:\EC\to\EC$ on the Julia set of $f_\alpha^{\circ 2}$ is a conjugacy between $f_\alpha^{\circ 2}$ and $F$, it suffices to prove that the following set has zero Lebesgue measure:
\begin{equation}
\Pi_0':=\{w\in \EC\setminus\bigcup_{k\geq 0}F^{-k}(\overline{\D}): F^{\circ n}(w) \text{ tends to } \partial\D \text{ as } n\to\infty\}.
\end{equation}

Let $w_0\in\Pi_0'$ and denote $w_n=F^{\circ n}(w_0)$. By a completely similar argument as \cite[Lemma 4.11]{Zha08c} (see also \cite[Lemma 6.1]{Zha22}), for the given small $\varepsilon>0$ and $r>0$, there exists a large integer $n_0\geq 1$, such that for any $n\geq n_0$, there exists an integer $1\leq m(n)\le n$ satisfying
\begin{equation}\label{w}
w_{m(n)-1}\in \Omega_{\varepsilon,r}^c,
\end{equation}
where $m(n)$ is an increasing sequence tending to $\infty$ as $n\to\infty$.

For small $r>0$, there are two domains contained in $\D_r(c)\setminus\overline{\D}$ which are tangent to $\partial\D$ at $c$ and are mapped by $F$ into the outside of $\overline{\D}$. We may assume $w_{m(n)-1}$ lies in one of them, say $U$, whose boundary contains an arc of $\partial\D$ in the lower half plane. Meanwhile there is exactly one domain $V\subset \D_r(c)\setminus\overline{\D}$, which is attached to $\partial\D$ at $c$ and mapped by $F$ into the inside of $\D$. Let $\ell_2$ and $\ell_3$ be the two half rays which are tangent with $U$ at $c$. When viewed from $w_{m(n)-1}$, $U$ is approximately an angle domain with boundary $\ell_2$ and $\ell_3$. It follows that the angle between $\ell_2$ and $\ell_3$ is $\pi/3$. Recall that $\ell_1$ is the straight segment between $\ell_2$ and $\ell_3$ which coincides with a part of the boundary of $\Omega_{\varepsilon,r}^c$. The angle between $\ell_1$ and $\ell_2$ is $\varepsilon \pi$.

We consider the polar coordinate system formed by $(c,\ell_2)$. Since $w_{m(n)-1}\in\Omega_{\varepsilon,r}^c$, we have $w_{m(n)-1}=r_0 e^{\beta_0\pi\ii}$ for some $\varepsilon<\beta_0<1/3$ and $0<r_0<r$. We define the following $3$ simply connected domains:
\begin{equation}
\begin{split}
   A^n = &~ \{r e^{\beta\pi\ii}:\tfrac{r_0}{2}<r<\tfrac{3r_0}{2} \text{ and } \tfrac{\varepsilon}{2}<\beta<\tfrac{1}{3}+\varepsilon\}, \\
   B^n = &~ \{r e^{\beta\pi\ii}:\tfrac{3r_0}{4}<r<\tfrac{5r_0}{4} \text{ and } \tfrac{3\varepsilon}{4}<\beta<\tfrac{1}{3}+\tfrac{\varepsilon}{2}\}, \text{ and} \\
   C^n = &~ B^n \cap V= B^n\cap \psi(U_0^0).
\end{split}
\end{equation}
This implies that there exist constants $M_1$, $M_2$, $M_3>0$ which are independent of $n$, such that for every $n\geq n_0$, we have $C^{n} \subset B^{n} \Subset A^{n} \Subset \Omega$ and
\begin{itemize}
  \item $\omega_{m(n)-1}\in B^{n}$ and $F(C^{n}) \subset \D$; and
  \item $\textup{mod}(A^{n}\setminus B^{n})\ge M_1$, $\area(C^{n})/\diam(B^{n})^2 \ge M_2$ and $\diam_{\Omega}(A^{n})\le M_3$.
\end{itemize}
See the picture in Figure \ref{contract} on the right.

\medskip
We consider the pull backs of $(A^{n},B^{n},C^{n},w_{m(n)-1})$ along the orbit $\{w_l:0\leq l\leq m(n)-1\}$, where $n>n_0$. For $0\le l\le m(n)-1$, let $A_l^n$ be the connected component of $F^{-(m(n)-1-l)}(A^{n})$ containing $w_l$. We use $B_l^{n}$ and $C_l^{n}$ to denote the pull backs of $B^{n}$ and $C^{n}$ by $F^{\circ (m(n)-1-l)}$ in $A^{n}_l$. Then $C^{n}_l\subset B^{n}_l\Subset A^{n}_l\Subset\Omega$.
In particular, $A^{n}_0$ is the connected component of $F^{-(m(n)-1)}(A^{n})$ containing $w_0$, and $A^{n}_{m(k)-1}$ is the connected component of $F^{-(m(n)-m(k))}(A^{n})$ containing $w_{m(k)-1}$ for $n_0\le k<n$.

By Schwarz-Pick's lemma, for all $0\le l\le m(n)-1$, we have $\diam_{\Omega}(A^n_l)\le M_3$.
Since $w_n$ tends to $\partial{\D}$ and $m(n)\to \infty$ as $n\to\infty$, there exist $n_1\geq n_0$ and a constant $0<\mu<1$ such that for all $k\ge n_1$,
\begin{equation}\label{19}
w_{m(k)-1}\in A^n_{m(k)-1} \subset \Omega_{\mu\varepsilon,r}^c.
\end{equation}
By (\ref{contracts}) and (\ref{19}), it follows that there is a positive number $\widetilde{\nu}=\widetilde{\nu}(\mu\varepsilon,r)<1$ which is independent of $n$ such that for every $k$ with $n_1\le k\le n$, we have
\begin{equation}\label{20}
\diam_{\Omega}(A^n_{m(k)-1})\le (1-\widetilde{\nu})\,\diam_{\Omega}(A^n_{m(k)}).
\end{equation}
Since $\{m(k):k\geq n_1\}$ is an infinite sequence, it follows that $\diam(A^n_0)\to 0$  as $n\to \infty$. Hence $\diam(B^n_0)\to 0$ as $n\to \infty$.

\medskip
Note that $F^{\circ (m(n)-1)}: A_0^n\to A^n$ is conformal. By Koebe's distortion theorem, we obtain a constant $0<C<\infty$ such that for all $n\geq n_1+1$, the following properties hold:
\begin{itemize}
 \item $w_0\in B_0^n$, $C^{n}_0 \subset B_0^n$ and $F^{\circ m(n)}(C_0^n)\subset\D$; and
 \item $\area(C_0^n)\ge C \diam (B_0^n)^2$.
\end{itemize}
This implies that $w_0$ is not a Lebesgue density point of $\Pi_0'$. By the arbitrariness of $w_0$, $\Pi_0'$ has zero Lebesgue measure and the proof is complete.
\end{proof}

\section{Proof of the Main Theorem}\label{proof}

Let $\Gamma$ be the set defined in \eqref{equ:Gamma}.
In this section we first define a map $A(\alpha)$ on the set $\Gamma$, which measures the conformal angle between the two critical points on the boundaries of the Siegel disks. Then we prove that $A$ is a homeomorphism between $\Gamma$ and a circle. The Main Theorem then follows immediately.

\medskip
To find points on $\Gamma$, let $\eta: (0,1) \to \Sigma_\theta$ be any simple (hence continuous) curve satisfying
\begin{equation}
\lim_{t\to 0^+} \eta(t)=0 \text{\quad and\quad}\lim_{t\to 1^-} \eta(t)=\infty.
\end{equation}
Define
$$t_0=t_0(\eta):=\sup\{0<t<1:\,c_2\in\partial{\Delta_{\eta(t)}^\infty} \text{ and } c_1\not\in\partial{\Delta_{\eta(t)}^0} \}.$$
As mentioned at the end of \S\ref{limitdynamics},  $c_2\in \partial\Delta_{\alpha}^\infty$ and $c_1\not\in \partial\Delta_{\alpha}^0$ if $\alpha$ is small enough, and $c_1\in\partial\Delta_{\alpha}^0$ and $c_2\not\in\partial\Delta_{\alpha}^\infty$ if $\alpha$ is large enough.
By Corollary \ref{cor:Siegel-crit}, we have $0<t_0<1$ and $1/\delta\leq\eta(t_0)\leq \delta$ for a constant $\delta=\delta(\theta)>1$.

\begin{lem}\label{lem:alpha-0}
We have $\alpha_0:=\eta(t_0)\in\Gamma$.
\end{lem}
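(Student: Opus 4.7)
The plan is to use the continuous dependence of $\partial\Delta_\alpha^0\cup\partial\Delta_\alpha^\infty$ on $\alpha$ (Proposition \ref{continuousity}) twice, once approaching $t_0$ from below and once from above, exploiting the fact that ``$c_j$ lies on $\partial\Delta_\alpha^\bullet$'' is a Hausdorff-closed condition. The goal is to show both $c_1\in\partial\Delta_{\alpha_0}^0$ and $c_2\in\partial\Delta_{\alpha_0}^\infty$, so that $\alpha_0\in\Gamma$ by definition.

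First I would extract a sequence $t_n\nearrow t_0$ with $c_2\in\partial\Delta_{\eta(t_n)}^\infty$ (which exists by the definition of the supremum together with Corollary \ref{cor:Siegel-crit}, ensuring the set is nonempty and has $t_0$ as sup). Since $\eta(t_n)\to\alpha_0$ and $\partial\Delta_{\eta(t_n)}^\infty\to\partial\Delta_{\alpha_0}^\infty$ in the Hausdorff metric (Proposition \ref{continuousity}), continuity gives $c_2\in\partial\Delta_{\alpha_0}^\infty$. By Lemma \ref{lem:only-one} this already forces $c_1\notin\partial\Delta_{\alpha_0}^\infty$, so only $c_1\in\partial\Delta_{\alpha_0}^0$ remains to be established.

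Next I would pick any sequence $s_n\searrow t_0$. By the definition of $t_0$ as a supremum, for each such $s_n$ one has $c_2\notin\partial\Delta_{\eta(s_n)}^\infty$ or $c_1\in\partial\Delta_{\eta(s_n)}^0$. If the second alternative occurs along a subsequence, continuity immediately yields $c_1\in\partial\Delta_{\alpha_0}^0$ and we are done. Otherwise, for all large $n$ we have $c_2\notin\partial\Delta_{\eta(s_n)}^\infty$, and I must show that this still forces $c_1\in\partial\Delta_{\eta(s_n)}^0$ eventually. Here I would use Lemma \ref{atleast1} (at least one critical point lies on $\partial\Delta^0\cup\partial\Delta^\infty$) combined with the observation that $c_1\in\partial\Delta_{\eta(s_n)}^\infty$ for infinitely many $n$ would, by continuity, put $c_1$ in $\partial\Delta_{\alpha_0}^\infty$, contradicting what was shown in the previous paragraph. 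Similarly, $c_2\in\partial\Delta_{\eta(s_n)}^0$ infinitely often would give $c_2\in\partial\Delta_{\alpha_0}^0\cap\partial\Delta_{\alpha_0}^\infty$, contradicting Lemma \ref{disjoint}. Ruling out both possibilities leaves $c_1\in\partial\Delta_{\eta(s_n)}^0$ for all large $n$, and one more application of continuity finishes.

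The main obstacle is the second half: in the ``bad'' case where $c_2$ jumps off $\partial\Delta_{\eta(t)}^\infty$ as $t$ crosses $t_0$ from above, I cannot directly read off the location of $c_1$ from the definition of $t_0$. The key idea there is to use Lemma \ref{disjoint} as a rigidity statement that prevents $c_2$ from simultaneously accumulating on both Siegel boundaries at the limit, and Lemma \ref{lem:only-one} to prevent $c_1$ and $c_2$ from coalescing on $\partial\Delta_{\alpha_0}^\infty$. The rest is just a clean case analysis on which critical point sits on which boundary along the approaching sequence.
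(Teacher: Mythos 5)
Your proof is correct and follows essentially the same route as the paper's: get $c_2\in\partial\Delta_{\alpha_0}^\infty$ by approaching $t_0$ from below with Proposition \ref{continuousity}, then force $c_1\in\partial\Delta_{\alpha_0}^0$ by approaching from above, using Lemma \ref{atleast1} together with Lemma \ref{lem:only-one} and Lemma \ref{disjoint} to rule out the alternatives. The paper packages the ``from above'' step as a single contradiction with the supremum (a neighborhood $W$ of $\alpha_0$ where $c_1\not\in\partial\Delta^0_\alpha\cup\partial\Delta^\infty_\alpha$ would force $c_2\in\partial\Delta^\infty_\alpha$ there, contradicting that $t_0$ is the sup), while you make the case analysis explicit along a sequence $s_n\searrow t_0$; these are the same argument in slightly different dress.
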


\begin{proof}
By Proposition \ref{continuousity}, we have
$$d_{H}(\partial{\Delta^{0}_{\eta(t)}}, \partial{\Delta^{0}_{\alpha_0}}) \to 0 \text{\quad and\quad} d_{H}(\partial{\Delta^{\infty}_{\eta(t)}}, \partial{\Delta^{\infty}_{\alpha_0}}) \to 0$$
as $t \to t_0$, where $d_H(\cdot,\cdot)$ denotes the Hausdorff distance in $\C$. By the definition of $t_0$, there is a sequence $t_n\to t_0^-$ such that $\partial{\Delta^{\infty}_{\eta(t_n)}}$ passes through $c_2$ for every $n\ge 1$. Thus $c_2 \in \partial{\Delta^{\infty}_{\alpha_0}}$.

If $c_1\not\in\partial\Delta_{\alpha_0}^{0}$, then by the continuous dependence of the boundaries of the Siegel disks, there exists a small neighborhood $W$ of $\alpha_0$ such that $c_1\not\in\partial\Delta_\alpha^{0}\cup \partial\Delta_\alpha^\infty$ for any $\alpha\in W$. This implies that $c_2\in\partial\Delta_\alpha^\infty$ and $c_1\not\in\partial\Delta_\alpha^{0}$ for all $\alpha\in W$, which contradicts the definition of $\alpha_0$. Therefore, $c_1\in\partial\Delta_{\alpha_0}^{0}$ and hence $\alpha_0\in\Gamma$.
\end{proof}

Let $h_\alpha^0$ and $h_\alpha^\infty$ be the conformal mapping defined in \eqref{equ:h-0-infty}. They can be homeomorphically extended to
\begin{equation}\label{equ:h-0-infty-extend}
h_{\alpha}^0: \overline{\D} \to \overline{\Delta_{\alpha}^0}  \text{\quad and\quad} h_{\alpha}^\infty: \overline{\D} \to \overline{\Delta_{\alpha}^\infty}.
\end{equation}
For $\alpha\in\Gamma$, the map $f_\alpha$ has two critical points $c_1\in\partial\Delta_\alpha^0$ and $c_2\in\partial\Delta_\alpha^\infty$, and two critical values $f_\alpha(c_1)\in\partial\Delta_\alpha^\infty$ and $f_\alpha(c_2)\in\partial\Delta_\alpha^0$.
In this section we assume that $h_{\alpha}^0$ and $h_{\alpha}^\infty$ are normalized (which are different from \S\ref{subsec:continuity}) such that
\begin{equation}
h_{\alpha}^0(0)=0,~h_{\alpha}^0(1)=c_1 \text{\quad and\quad} h_{\alpha}^\infty(0)=\infty,~h_{\alpha}^\infty(1)=f_\alpha(c_1).
\end{equation}

\medskip
Note that $f_\alpha\circ h_\alpha^0:\overline{\D}\to\overline{\Delta_\alpha^\infty}$ is the homeomorphic extension of the conformal map $f_\alpha\circ h_\alpha^0:\D\to\Delta_\alpha^\infty$ satisfying $f_\alpha\circ h_\alpha^0(0)=\infty$ and $f_\alpha\circ h_\alpha^0(1)=f_\alpha(c_1)$. By the uniqueness of Riemann maps, we have $f_\alpha\circ h_\alpha^0(\zeta)=h_\alpha^\infty(\zeta)$ for all $\zeta\in\overline{\D}$.
Let
\begin{equation}\label{equ:A-A-tilde}
A(\alpha):=\arg\,(h_{\alpha}^0)^{-1}(f_\alpha(c_2)) \text{\quad and \quad} \widetilde{A}(\alpha):=\arg\,(h_{\alpha}^\infty)^{-1}(c_2)
\end{equation}
respectively, be the conformal angle between $c_1$ and $f_\alpha(c_2)$ (in the conformal coordinate $(h_{\alpha}^0)^{-1}$), and the conformal angle between $f_\alpha(c_1)$ and $c_2$ (in the conformal coordinate $(h_{\alpha}^\infty)^{-1}$) measured counterclockwise. Since $(h_\alpha^\infty)^{-1}\circ f_\alpha(z)=(h_\alpha^0)^{-1}(z)$ for all $z\in\overline{\Delta_\alpha^0}$, we have
\begin{equation}\label{equ:A-alpha}
\begin{split}
A(\alpha)
=&~\arg\,(h_{\alpha}^0)^{-1}\circ f_\alpha(c_2) \\
=&~\arg\,(h_{\alpha}^\infty)^{-1}\circ f_\alpha^{\circ 2}(c_2)=\widetilde{A}(\alpha)+2\pi\theta.
\end{split}
\end{equation}

\begin{lem}\label{continuousangle}
The map $\alpha\mapsto A(\alpha)$ is continuous on $\Gamma$.
\end{lem}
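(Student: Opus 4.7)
The plan is to reduce the statement to the uniform convergence already established inside the proof of Proposition \ref{continuousity}, handling the change of normalization by factoring out an explicit unimodular rotation.

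First, I would fix $\alpha_0\in\Gamma$ and take an arbitrary sequence $\alpha_n\to\alpha_0$ in $\Gamma$. Let $H_\alpha^0:\EC\to\EC$ denote the $\widetilde{K}_0$-quasiconformal extension of the Riemann map normalized by $H_\alpha^0(0)=0$ and $(H_\alpha^0)'(0)>0$ (i.e.\ the map considered in \S\ref{subsec:continuity}). The proof of Proposition \ref{continuousity} already supplies $H_{\alpha_n}^0\to H_{\alpha_0}^0$ uniformly on $\EC$. The normalization used in the present section differs only by a rotation of the disk: one has $h_\alpha^0(\zeta)=H_\alpha^0\bigl(e^{\ii\beta(\alpha)}\zeta\bigr)$ with $e^{\ii\beta(\alpha)}:=(H_\alpha^0)^{-1}(c_1)\in\partial\D$, so directly from \eqref{equ:A-A-tilde}
\begin{equation*}
A(\alpha)\equiv\arg\frac{(H_\alpha^0)^{-1}(f_\alpha(c_2))}{(H_\alpha^0)^{-1}(c_1)}\pmod{2\pi}.
\end{equation*}
It therefore suffices to show that the two points $(H_\alpha^0)^{-1}(c_1)$ and $(H_\alpha^0)^{-1}(f_\alpha(c_2))$ on $\partial\D$ depend continuously on $\alpha$.

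The verification of both continuities uses the same trick. Extract a subsequence along which $(H_{\alpha_n}^0)^{-1}(c_1)\to\zeta^*\in\overline{\D}$, apply $H_{\alpha_n}^0$ to both sides, and invoke the uniform convergence $H_{\alpha_n}^0\to H_{\alpha_0}^0$ together with the fact that $c_1$ is independent of $\alpha$ to deduce $c_1=H_{\alpha_0}^0(\zeta^*)$; hence $\zeta^*=(H_{\alpha_0}^0)^{-1}(c_1)$, the subsequential limit is unique, and the full sequence converges. The identical argument, now additionally using the manifest continuity of $\alpha\mapsto f_\alpha(c_2)$ from the formula \eqref{equ:family}, yields $(H_{\alpha_n}^0)^{-1}(f_{\alpha_n}(c_2))\to(H_{\alpha_0}^0)^{-1}(f_{\alpha_0}(c_2))$. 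Combining the two convergences, the ratio above is continuous in $\alpha$, so $A(\alpha_n)\to A(\alpha_0)$ in $\R/2\pi\Z$.

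The only mild subtlety is the bookkeeping needed to switch from the ``positive derivative'' normalization of \S\ref{subsec:continuity} to the ``$h_\alpha^0(1)=c_1$'' normalization used here, which is exactly what the unimodular factor $e^{\ii\beta(\alpha)}$ absorbs; all of the genuine analytic content (quasiconformal extension with uniform dilatation, normality of the family, and uniqueness of the normalized linearizer) has already been packaged into Proposition \ref{continuousity}, so no essentially new estimate is required.
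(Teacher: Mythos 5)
Your proof is correct and follows essentially the same route as the paper's: both deduce continuity of $A$ from the continuity of the normalized Riemann map for $\Delta_\alpha^0$, which in turn rests on Proposition \ref{continuousity}. The only difference is that the paper invokes Carath\'eodory's convergence theorem as a black box while you unpack the same fact by reusing the uniform convergence of the quasiconformally extended linearizers already established inside the proof of Proposition \ref{continuousity} and making explicit the unimodular factor needed to pass from the positive-derivative normalization to the $h_\alpha^0(1)=c_1$ normalization.
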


\begin{proof}
By Proposition \ref{continuousity}, $\partial\Delta_\alpha^0$ moves continuously as $\alpha$ varies continuously on $\Gamma$. Since the map $h_{\alpha}^0: \overline{\D} \to \overline{\Delta_{\alpha}^0}$ is normalized by $h_{\alpha}^0(0)=0$ and $h_{\alpha}^0(1)=c_1$, it follows from Carath\'{e}odory that $h_\alpha^0$ depends continuously on $\alpha\in\Gamma$. In particular, $\alpha\mapsto A(\alpha)$ is continuous on $\Gamma$.
\end{proof}

The next lemma implies that $\alpha\in\Gamma$ is uniquely determined by the angle $A(\alpha)$.

\begin{lem}\label{injection}
If $A({\alpha_1})=A({\alpha_2})$ for $\alpha_1,\alpha_2\in\Gamma$, then $\alpha_1=\alpha_2$.
\end{lem}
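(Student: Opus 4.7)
The plan is to assume $A(\alpha_1)=A(\alpha_2)$ and build a Möbius conjugacy $\Phi\colon\widehat{\C}\to\widehat{\C}$ between $f_{\alpha_1}$ and $f_{\alpha_2}$. Because the family $\Sigma_\theta$ is normalized so that both maps have $\{0,\infty\}$ as their $2$-cycle of Siegel points and $\{0,-1\}$ as their pole set, such a Möbius map is forced to fix $0$, $\infty$ and $-1$; hence $\Phi=\mathrm{id}$, $f_{\alpha_1}=f_{\alpha_2}$, and comparing leading coefficients yields $\alpha_1=\alpha_2$.

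I start by defining $\Phi$ on the Siegel disks: set $\Phi:=h_{\alpha_2}^0\circ(h_{\alpha_1}^0)^{-1}$ on $\overline{\Delta_{\alpha_1}^0}$ and $\Phi:=h_{\alpha_2}^\infty\circ(h_{\alpha_1}^\infty)^{-1}$ on $\overline{\Delta_{\alpha_1}^\infty}$. These pieces are conformal in the interior, homeomorphic on the closures, and combine into a single map because the closures are disjoint by Lemma \ref{disjoint}. Combining the normalizations $h_\alpha^0(1)=c_1$ and $h_\alpha^\infty(1)=f_\alpha(c_1)$ with the assumption $A(\alpha_1)=A(\alpha_2)$ and its consequence $\widetilde{A}(\alpha_1)=\widetilde{A}(\alpha_2)$ from \eqref{equ:A-alpha}, one checks that $\Phi(c_i)=c_i$ and $\Phi(f_{\alpha_1}(c_i))=f_{\alpha_2}(c_i)$ for $i=1,2$; the identity $f_\alpha\circ h_\alpha^0=h_\alpha^\infty$ established just before \eqref{equ:A-A-tilde} then yields $f_{\alpha_2}\circ\Phi=\Phi\circ f_{\alpha_1}$ on $\overline{\Delta_{\alpha_1}^0}\cup\overline{\Delta_{\alpha_1}^\infty}$.

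Next I extend $\Phi$ to every remaining Fatou component by univalent pullback along the address structure of \S\ref{subsec:address}. That structure provides a canonical bijection $W\leftrightarrow W'$ between the non-Siegel Fatou components of $f_{\alpha_1}$ and of $f_{\alpha_2}$, and for each such $W$ there is a unique branch of $f_{\alpha_2}^{-1}$ that lifts $\Phi|_{f_{\alpha_1}(W)}$ back to a conformal map $W\to W'$, a degree-two branch being used at the three components immediately attached at a critical point. Adjacent Fatou components touch only at preimages of $\{c_1,c_2\}$, and these contact points are already fixed by the previous stage of $\Phi$, so the successive lifts glue continuously; on the interior of each component $\Phi$ is a composition of conformal linearizations and conformal inverse branches, hence conformal.

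Finally I extend $\Phi$ across the Julia set and invoke rigidity. The hyperbolic contraction estimate behind Lemma \ref{Juliaset} forces the diameters of the pulled-back components to shrink uniformly to $0$, so $\Phi$ extends continuously to a homeomorphism of $\widehat{\C}$ that is conformal on the Fatou set. Since $J(f_{\alpha_1})$ has two-dimensional Lebesgue measure zero (Lemma \ref{Juliaset}), the Beltrami coefficient of $\Phi$ vanishes almost everywhere, so $\Phi$ is Möbius; the normalizations give $\Phi(0)=0$ and $\Phi(\infty)=\infty$, and since the semiconjugacy sends the poles $\{0,-1\}$ of $f_{\alpha_1}$ onto those of $f_{\alpha_2}$, we also obtain $\Phi(-1)=-1$ and hence $\Phi=\mathrm{id}$. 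The main obstacle will be the combinatorial pullback in the third step: one has to verify, using both the address data of \S\ref{subsec:address} and the shrinking-neighborhood estimates of \S\ref{gamma}, that the infinitely many branch lifts remain globally compatible and yield uniform diameter control, so that the extension to $J(f_{\alpha_1})$ is genuinely continuous rather than merely measurable.
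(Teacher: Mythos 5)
Your strategy---define $\Phi$ conformally on the Siegel disk closures using the normalizations and the angle condition, extend it component-by-component by conformal pullback along the addresses of \S\ref{subsec:address}, then pass to a global conjugacy whose conformality off the Julia set forces rigidity---is genuinely different from the paper's. The paper never directly builds a conformal map on the Fatou set; it first proves that $f_{\alpha_1}$ and $f_{\alpha_2}$ are Thurston equivalent, with the intermediate Claim and the maps $\widetilde{\phi}_0,\widetilde{\phi}_1$ accounting for a possible Dehn twist $T^{\circ k}$ in the annulus between the two Siegel disks, and then runs a quasiconformal pullback producing a sequence $\phi_n\colon\EC\to\EC$ with $\|\mu_{\phi_n}\|_\infty\le\|\mu_{\phi_0}\|_\infty<1$. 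The entire point of that extra work is that the limit $\phi$ is automatically a \emph{quasiconformal} homeomorphism of $\EC$; quasiconformality together with conformality on the full-measure Fatou set gives $\mu_\phi=0$ a.e., hence $\phi$ M\"obius.

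Your final step has a genuine gap exactly where you omit this. At best your construction yields a homeomorphism $\Phi$ of $\EC$ that is conformal on $F(f_{\alpha_1})$ and conjugates $f_{\alpha_1}$ to $f_{\alpha_2}$. The claim that $\Phi$ must then be M\"obius because $J(f_{\alpha_1})$ has zero area is not valid: a homeomorphism of $\EC$ conformal off a compact set $E$ of Lebesgue measure zero need not be M\"obius---such $E$ are called non-removable, and compact sets of Hausdorff dimension one with this property exist. The phrase ``the Beltrami coefficient of $\Phi$ vanishes almost everywhere'' is only meaningful, and only sufficient, once one already knows $\Phi$ is quasiconformal (or at least ACL), and nothing in a component-by-component conformal gluing gives any control on the dilatation of $\Phi$ near $J(f_{\alpha_1})$. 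So the last sentence must be replaced by an argument that $\Phi$ is quasiconformal, which is precisely what the paper's Thurston-pullback scheme delivers and which in turn forces you to confront the Dehn-twist bookkeeping you bypass. The difficulty you flag yourself (continuity of the extension across $J$) is real but secondary: even granting a continuous homeomorphic extension, measure zero of the Julia set alone does not finish the proof.
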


\begin{proof}
Based on the preparations in \S\ref{gamma}, we prove that $f_{\alpha_1}^{\circ 2}$ is conformally conjugate to $f_{\alpha_2}^{\circ 2}$ by a rigidity argument from $A(\alpha_1)=A(\alpha_2)$. To distinguish the objects corresponding to different parameters, we shall use $c_{1,\alpha}$ and $c_{2,\alpha}$ to denote the critical points of $f_\alpha$.

Since $A(\alpha_1)=A(\alpha_2)$, we have $\widetilde{A}(\alpha_1)=\widetilde{A}(\alpha_2)$ by \eqref{equ:A-alpha}. Define $\phi_0:=h_{\alpha_2}^0\circ (h_{\alpha_1}^0)^{-1}:\overline{\Delta_{\alpha_1}^0}\to\overline{\Delta_{\alpha_2}^0}$ and $\phi_0:=h_{\alpha_2}^\infty\circ (h_{\alpha_1}^\infty)^{-1}:\overline{\Delta_{\alpha_1}^\infty}\to\overline{\Delta_{\alpha_2}^\infty}$. Then
$$\phi_0: \overline{\Delta_{\alpha_1}^0} \cup \overline{\Delta_{\alpha_1}^{\infty}} \to \overline{\Delta_{\alpha_2}^0}\cup \overline{\Delta_{\alpha_2}^{\infty}}$$
is a continuous map satisfying
\begin{itemize}
\item $\phi_0: \Delta_{\alpha_1}^0\to \Delta_{\alpha_2}^0$ and $\phi_0: \Delta_{\alpha_1}^\infty\to \Delta_{\alpha_2}^\infty$ are conformal;
\item $\phi_0(c_{1,\alpha_1})=c_{1,\alpha_2}$ and $\phi_0(f_{\alpha_1}(c_{2,\alpha_1}))=f_{\alpha_2}(c_{2,\alpha_2})$;
\item $\phi_0(c_{2,\alpha_1})=c_{2,\alpha_2}$ and $\phi_0(f_{\alpha_1}(c_{1,\alpha_1}))=f_{\alpha_2}(c_{1,\alpha_2})$; and
\item $\phi_0\circ f_{\alpha_1}^{\circ 2}(z)=f_{\alpha_2}^{\circ 2}\circ\phi_0(z)$ for all $z\in \overline{\Delta_{\alpha_1}^0} \cup \overline{\Delta_{\alpha_1}^{\infty}}$.
\end{itemize}
We have the following:
\begin{claim}
The two maps $f_{\alpha_1}$ and $f_{\alpha_2}$ are Thurston equivalent. Specifically, there exist two quasiconformal mappings $\phi_0:\EC\to\EC$ and $\psi_0:\EC\to\EC$ such that
\begin{itemize}
  \item $\phi_0:\EC\to\EC$ and $\psi_0:\EC\to\EC$ are extensions of $\phi_0$ on $\overline{\Delta_{\alpha_1}^0} \cup \overline{\Delta_{\alpha_1}^{\infty}}$; and
  \item $\psi_0$ is isotopic to $\phi_0$ relative to $\partial{\Delta_{\alpha_1}^0} \cup \partial{\Delta_{\alpha_1}^{\infty}}$, and the following diagram is commutative:
\begin{equation}
\begin{CD}
\EC @>\psi_0>> \EC \\
@VVf_{\alpha_1} V   @VVf_{\alpha_2} V \\
\EC @>\phi_0>> \EC.
\end{CD}
\end{equation}
\end{itemize}
\end{claim}

In fact, since $\partial\Delta_\alpha^0$ and $\partial\Delta_\alpha^\infty$ are quasicircles for all $\alpha\in\Sigma_\theta$, there are quasiconformal mappings $\chi_i:\EC\to\EC$, where $i=1,2$, such that
\begin{itemize}
  \item $\chi_i:\Delta_{\alpha_i}^0\to\D_r$ and $\chi_i:\Delta_{\alpha_i}^\infty\to\EC\setminus\overline{\D}$ are conformal, where $\D_r=\{\zeta:|\zeta|<r\}$ for some $0<r<1$; and
  \item $\chi_i(c_{1,\alpha_i})=r$, $\chi_i\big(f_{\alpha_i}(c_{1,\alpha_i})\big)=1$, $\chi_i(0)=0$ and $\chi_i(\infty)=\infty$.
\end{itemize}
Then for $i=1,2$, each $\widetilde{f}_i:=\chi_i\circ f_{\alpha_i}\circ \chi_i^{-1}$ is a quasiregular map and
\begin{equation}
\widetilde{f}_i(\zeta)=r/\zeta: \overline{\D}_r\to \EC\setminus\D \text{\quad and\quad} \widetilde{f}_i(\zeta)=r e^{2\pi\ii\theta}/\zeta: \EC\setminus\D\to\overline{\D}_r.
\end{equation}

Let $U_0^0(\alpha_i)\neq \Delta_{\alpha_i}^0$ be the connected component of $f_{\alpha_i}^{-1}(\Delta_{\alpha_i}^\infty)$ attaching at $c_{1,\alpha_i}$. We use $\widetilde{U}_0^0(\alpha_i)\neq \Delta_{\alpha_i}^\infty$ to denote the connected component of $f_{\alpha_i}^{-1}(\Delta_{\alpha_i}^0)$ attaching at $c_{2,\alpha_i}$. Denote $\A_r=\{\zeta:r<|\zeta|<1\}$. Then $\widetilde{f}_i^{-1}(\A_r)=\A_r\setminus(\chi_i(U_0^0(\alpha_i)\cup\widetilde{U}_0^0(\alpha_i)))$ is an annulus and $\widetilde{f}_i:\widetilde{f}_i^{-1}(\A_r)\to\A_r$ is a covering map of degree two for $i=1,2$.
Hence $\widetilde{f}_2$ is homotopic to $T^{\circ k}\circ\widetilde{f}_1$ relative to $\partial\A_r$ for some $k\in\Z$, where
\begin{equation}
T(\zeta):=\left\{
\begin{array}{ll}
\zeta e^{2\pi\ii\frac{|\zeta|-r}{1-r}} & \text{\quad if } z\in\A_r,\\
\zeta &\text{\quad otherwise}.
\end{array}
\right.
\end{equation}
is the \textit{Dehn twist} in the annulus $\A_r$. See Figure \ref{Fig:Dehn-twist}.

\begin{figure}[!htpb]
  \setlength{\unitlength}{1mm}
  \centering
  \includegraphics[width=0.9\textwidth]{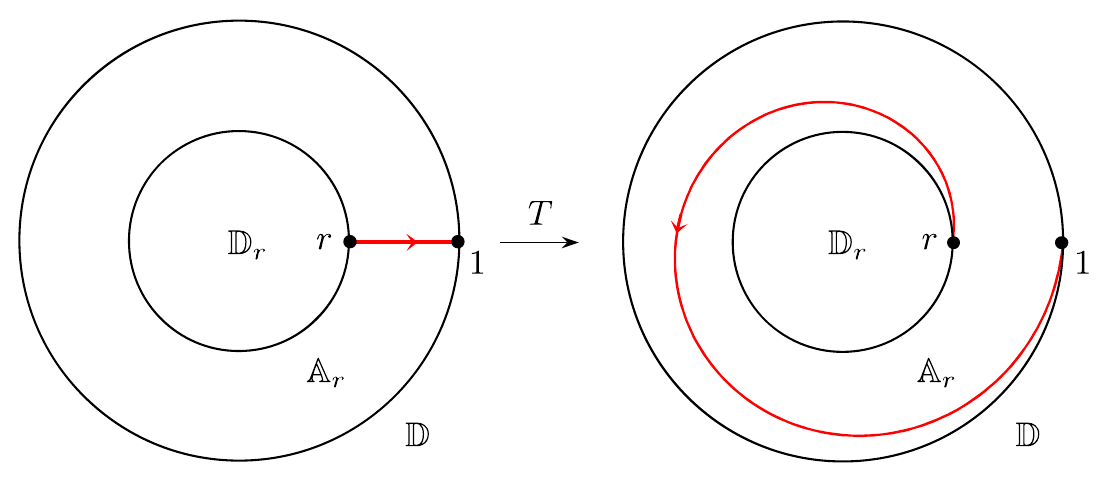}
  \caption{The Dehn twist $T$ in the annulus $\A_r=\D\setminus\overline{\D}_r$. The segment $[r,1]$ and its image under $T$ are colored red.}
  \label{Fig:Dehn-twist}
\end{figure}

For $i=1,2$, we denote
\begin{equation}
\widetilde{c}_{2,\alpha_i}:=\chi_i(c_{2,\alpha_i}) \text{\quad and\quad} \widetilde{v}_{2,\alpha_i}:=\widetilde{f}_i\big(\chi_i(c_{2,\alpha_i})\big)=\chi_i\big(f_i(c_{2,\alpha_i})\big).
\end{equation}
Since $\chi_i(c_{1,\alpha_i})=r$ and $\chi_i\big(f_{\alpha_i}(c_{1,\alpha_i})\big)=1$, by \eqref{equ:A-A-tilde} we have $A(\alpha_i)=\arg \widetilde{v}_{2,\alpha_i}$ and $\widetilde{A}(\alpha_i)=\arg \widetilde{c}_{2,\alpha_i}$.
Since $A(\alpha_1)=A(\alpha_2)$ and $\widetilde{A}(\alpha_1)=\widetilde{A}(\alpha_2)$, we have $\widetilde{v}_{2,\alpha_1}=\widetilde{v}_{2,\alpha_2}$ and $\widetilde{c}_{2,\alpha_1}=\widetilde{c}_{2,\alpha_2}$.

In the following, we use `$\simeq$' to denote the homotopy. Let $\widetilde{\phi}_0$ and $\widehat{\phi}_1$ be two quasiconformal mappings of $\EC$ such that
\begin{itemize}
\item $\widetilde{\phi}_0=\widehat{\phi}_1=\id$ in $\EC\setminus\A_r$; and
\item $\widetilde{\phi}_0\simeq \widehat{\phi}_1\simeq T^{-k}:\A_r\to\A_r$ rel $\partial\A_r$.
\end{itemize}
Note that for $i=1,2$, $\partial \A_r\subset\widetilde{f}_i^{-1}(\partial\A_r)$ and $\widetilde{f}_i:\widetilde{f}_i^{-1}(\A_r)\to\A_r$ is a covering map of degree two. From $\widetilde{f}_2 \simeq T^{\circ k}\circ\widetilde{f}_1$ rel $\partial\A_r$ for some $k\in\Z$, we have
\begin{equation}
\widetilde{f}_2\circ\widehat{\phi}_1\simeq T^{\circ k}\circ\widetilde{f}_1\circ \widehat{\phi}_1 \simeq T^{-k}\circ\widetilde{f}_1 \simeq \widetilde{\phi}_0\circ \widetilde{f}_1 \text{\quad rel } \partial\A_r.
\end{equation}
Hence there exists a quasiconformal mapping $\widetilde{\phi}_1:\EC\to\EC$ such that $\widetilde{\phi}_1=\id$ in $\EC\setminus\A_r$, $\widetilde{\phi}_1\simeq T^{-k}\simeq \widetilde{\phi}_0$ rel $\partial\A_r$, and the following diagram is commutative:
\begin{equation}
\begin{CD}
\EC @>\widetilde{\phi}_1>> \EC \\
@VV\widetilde{f}_1 V   @VV\widetilde{f}_2 V \\
\EC @>\widetilde{\phi}_0>> \EC.
\end{CD}
\end{equation}
See Figure \ref{Fig:twist-lift} for an illustration when $k=1$.
Then the claim that $f_{\alpha_1}$ and $f_{\alpha_2}$ are Thurston equivalent holds if we set $\phi_0:=\chi_2^{-1}\circ\widetilde{\phi}_0\circ\chi_1$ and $\psi_0:=\chi_2^{-1}\circ\widetilde{\phi}_1\circ\chi_1$.

\begin{figure}[!htpb]
  \setlength{\unitlength}{1mm}
  \centering
  \includegraphics[width=0.9\textwidth]{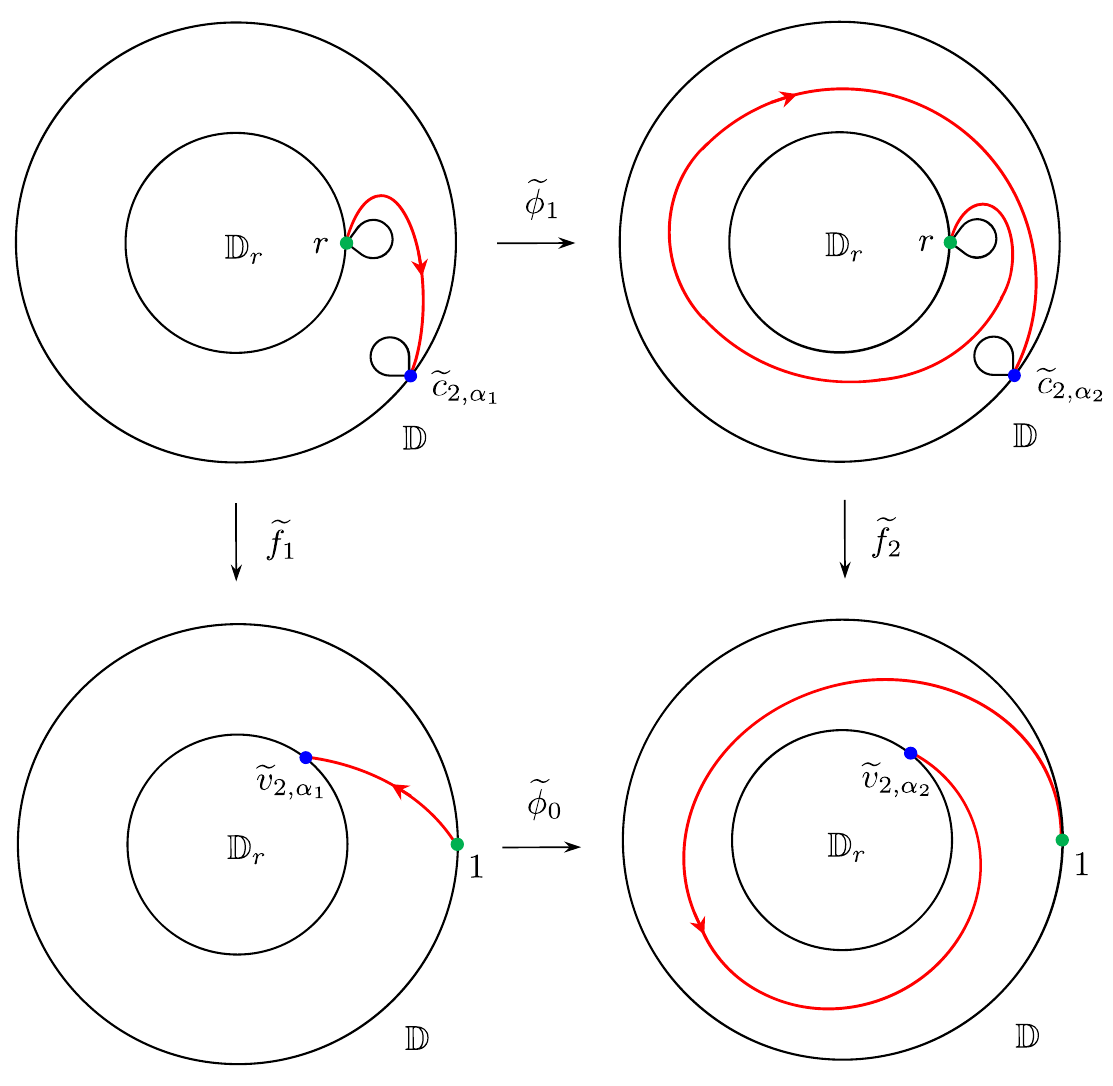}
  \caption{An illustration of the construction of the quasiconformal mappings $\widetilde{\phi}_0$ and $\widetilde{\phi}_1$ of $\EC$ under the condition $\widetilde{f}_2\simeq T^{\circ k}\circ\widetilde{f}_1$ rel $\partial\A_r$ when $k=1$.}
  \label{Fig:twist-lift}
\end{figure}

\medskip

Note that $f_{\alpha_i}^{-2}(\Delta_{\alpha_i}^0)$ is the union of $4$ connected components $\Delta_{\alpha_i}^0$ and $U_0^t(\alpha_i)$ with $t\in\{0,1,2\}$. Similarly, we use $\widetilde{U}_0^t(\alpha_i)$ with $t\in\{0,1,2\}$ to denote the components of $f_{\alpha_i}^{-2}(\Delta_{\alpha_i}^\infty)\setminus\Delta_{\alpha_i}^\infty$.
From the above claim we know that $\psi_0$ is conformal in every component of $f_{\alpha_1}^{-1}(\Delta_{\alpha_1}^0\cup\Delta_{\alpha_1}^\infty)=\Delta_{\alpha_1}^0\cup\Delta_{\alpha_1}^\infty\cup U_0^0(\alpha_1)\cup \widetilde{U}_0^0(\alpha_1)$, and there exists a quasiconformal mapping $\phi_1:\EC\to\EC$ which is the lift of $\psi_0$ such that
\begin{itemize}
  \item $\phi_1=\phi_0$ on $\overline{\Delta_{\alpha_1}^0} \cup \overline{\Delta_{\alpha_1}^{\infty}}$;
  \item $\phi_1\simeq\phi_0$ rel $\partial{\Delta_{\alpha_1}^0} \cup \partial{\Delta_{\alpha_1}^{\infty}}$;
  \item $\phi_0\circ f_{\alpha_1}^{\circ 2}=f_{\alpha_2}^{\circ 2}\circ\phi_1$; and
  \item $\phi_1$ is holomorphic in every component of $f_{\alpha_1}^{-2}(\Delta_{\alpha_1}^0\cup\Delta_{\alpha_1}^\infty)$, and maps $U_0^t(\alpha_1)$ to $U_0^t(\alpha_2)$ and $\widetilde{U}_0^t(\alpha_1)$  to $\widetilde{U}_0^t(\alpha_2)$ for $t\in\{0,1,2\}$.
\end{itemize}

\medskip
From \S\ref{subsec:address} we know that every connected component of $f_{\alpha_i}^{-2n}(\Delta_{\alpha_i}^0)$ which is different from $\Delta_{\alpha_i}^0$, can be written as exactly one of $U_{s_1,\cdots,s_m}^{t_1,\cdots,t_m}(\alpha_i)$, $V_{s_1,\cdots,s_m}^{t_1,\cdots,t_m}(\alpha_i)$, $W_{s_1,\cdots,s_m}^{t_1,\cdots,t_m}(\alpha_i)$ or $X_{s_1,\cdots,s_m}^{t_1,\cdots,t_m}(\alpha_i)$, where $n=s_1+\cdots +s_m+1$, $s_i\geq 1$ and $t_i\in\{0,1,2\}$. Similarly, we use $\widetilde{U}_{s_1,\cdots,s_m}^{t_1,\cdots,t_m}(\alpha_i)$, $\widetilde{V}_{s_1,\cdots,s_m}^{t_1,\cdots,t_m}(\alpha_i)$, $\widetilde{W}_{s_1,\cdots,s_m}^{t_1,\cdots,t_m}(\alpha_i)$ and $\widetilde{X}_{s_1,\cdots,s_m}^{t_1,\cdots,t_m}(\alpha_i)$ to denote the connected components of $f_{\alpha_i}^{-2n}(\Delta_{\alpha_i}^\infty)$.

\medskip
Now let us assume that for every $1\le k\le n$, we have a quasiconformal homeomorphism $\phi_k :\EC \to \EC$ so that
\begin{itemize}
  \item $\phi_k=\phi_{k-1}$ on $f_{\alpha_1}^{-2(k-1)}(\overline{\Delta_{\alpha_1}^0} \cup \overline{\Delta_{\alpha_1}^{\infty}})$;
  \item $\phi_k\simeq \phi_{k-1}$ rel $\partial{\Delta_{\alpha_1}^0} \cup \partial{\Delta_{\alpha_1}^{\infty}}$;
  \item $\phi_{k-1}\circ f_{\alpha_1}^{\circ 2}=f_{\alpha_2}^{\circ 2}\circ\phi_k$; and
  \item $\phi_k$ is holomorphic in every connected component of $f_{\alpha_1}^{-2k}(\Delta_{\alpha_1}^0\cup\Delta_{\alpha_1}^\infty)$, and maps every component to the corresponding one with the same address, i.e., $U_{s_1,\cdots,s_m}^{t_1,\cdots,t_m}(\alpha_1)$ to $U_{s_1,\cdots,s_m}^{t_1,\cdots,t_m}(\alpha_2)$ etc, where $k=s_1+\cdots +s_m+1$.
\end{itemize}

We define $\phi_{n+1}$ as follows. First we set $\phi_{n+1}=\phi_n$ on $f_{\alpha_1}^{-2n}(\overline{\Delta_{\alpha_1}^0} \cup \overline{\Delta_{\alpha_1}^{\infty}})$.
Note that $\bigcup_{0\leq k\leq n+1}f_{\alpha_i}^{-2k}(\overline{\Delta_{\alpha_i}^0})$ and $\bigcup_{0\leq k\leq n+1}f_{\alpha_i}^{-2k}(\overline{\Delta_{\alpha_i}^\infty})$ are connected and disjoint to each other, where $i=1,2$.
By induction, $\phi_{n+1}$ can be extended to a quasiconformal mapping $\phi_{n+1}:\EC\to\EC$ which satisfies
\begin{itemize}
  \item $\phi_{n+1}\simeq\phi_n$ rel $\partial{\Delta_{\alpha_1}^0} \cup \partial{\Delta_{\alpha_1}^{\infty}}$;
  \item $\phi_n\circ f_{\alpha_1}^{\circ 2}=f_{\alpha_2}^{\circ 2}\circ\phi_{n+1}$; and
  \item $\phi_{n+1}$ is holomorphic in every connected component of $f_{\alpha_1}^{-2(n+1)}(\Delta_{\alpha_1}^0\cup\Delta_{\alpha_1}^\infty)$, and maps every component to the corresponding one with the same address, i.e., $U_{s_1',\cdots,s_{m'}'}^{t_1',\cdots,t_{m'}'}(\alpha_1)$ to $U_{s_1',\cdots,s_{m'}'}^{t_1',\cdots,t_{m'}'}(\alpha_2)$ etc, where $n=s_1'+\cdots +s_{m'}'$.
\end{itemize}

By induction, we have a sequence of quasiconformal homeomorphisms $\{\phi_n:n\in\N\}$ of the complex sphere such that each $\phi_n:\EC\to\EC$ is conformal in every component of $f^{-2n}_{\alpha_1}(\Delta_{\alpha_1}^0 \cup \Delta_{\alpha_1}^{\infty})$ and its Beltrami coefficient satisfies
\begin{equation}
\| \mu_{\phi_n}\|_{\infty} \le \| \mu_{\phi_0}\|_{\infty}<1.
\end{equation}
Since $\phi_n=\phi_0$ on $\Delta_{\alpha_1}^0\cup\Delta_{\alpha_1}^\infty$ for all $n\geq 1$, it follows that $\{\phi_n:n\in\N\}$ is a normal family.

Passing to the convergent subsequences of $\{\phi_n\}_{n\geq 0}$ two times, we obtain two limit quasiconformal mappings $\phi$ and $\psi$, which fix $0$, $c_1$, $c_2$ and $\infty$, and satisfy $\phi\circ f_{\alpha_1}^{\circ 2}=f_{\alpha_2}^{\circ 2}\circ\psi$ since $\phi_n\circ f_{\alpha_1}^{\circ 2}=f_{\alpha_2}^{\circ 2}\circ\phi_{n+1}$ for each $n\geq 0$. From the construction it follows that $\phi(z)=\psi(z)$ for $z\in\bigcup_{n\geq 0}f_{\alpha_1}^{-2n}(\Delta_{\alpha_1}^0\cup\Delta_{\alpha_1}^\infty)$. Then $\phi=\psi$ on a dense set of $\EC$ since the Fatou set of $f_{\alpha_1}^{\circ 2}$ is dense on $\EC$. This implies that $\phi$ coincides with $\psi$ on $\EC$ because of the continuity. Then $f_{\alpha_1}^{\circ 2}$ and $f_{\alpha_2}^{\circ 2}$ are quasiconformally conjugate to each other on $\EC$ and they are conformally conjugate in the Fatou set.

By Lemma \ref{Juliaset}, the Julia set of $f_{\alpha_1}$ has zero Lebesgue measure. Hence $\phi:\EC\to\EC$ is conformal everywhere and must be the identity. This implies that $f^{\circ 2}_{\alpha_1}=f^{\circ 2}_{\alpha_2}$. By \eqref{equ:1}, we get $\alpha_1=\alpha_2$.
\end{proof}

Note that $A(\alpha)=0$ if and only if $f_\alpha(c_2)=c_1$. This is equivalent to $\alpha=\alpha_*:=c_1/f_1(c_2)$.
Hence we identify the two choices of images $A(\alpha_*)=0$ and $A(\alpha_*)=2\pi$ and assume that $A(\alpha)\in(0,2\pi)$ for all $\alpha\in\Gamma\setminus\{\alpha_*\}$.

\begin{lem}\label{continuousmap}
The map $A: \Gamma\to\R/(2\pi\Z)$ is a homeomorphism.
\end{lem}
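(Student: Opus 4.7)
The map $A:\Gamma\to\R/(2\pi\Z)$ is already known to be continuous (Lemma~\ref{continuousangle}) and injective (Lemma~\ref{injection}); to promote it to a homeomorphism, my plan is to establish compactness of $\Gamma$ and surjectivity of $A$. A continuous injection from a compact space into a Hausdorff space is automatically a homeomorphism onto its image, so these two ingredients suffice.

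For compactness, Corollary~\ref{cor:Siegel-crit} pins $\Gamma$ inside the annulus $\{1/\delta\le|\alpha|\le\delta\}$, giving boundedness and boundedness away from the two singularities $0$ and $\infty$. Closedness comes from Proposition~\ref{continuousity}: if $\alpha_n\to\alpha_\infty$ in $\Sigma_\theta$ with $\alpha_n\in\Gamma$, the Hausdorff convergences $\partial\Delta_{\alpha_n}^0\to\partial\Delta_{\alpha_\infty}^0$ and $\partial\Delta_{\alpha_n}^\infty\to\partial\Delta_{\alpha_\infty}^\infty$ transport the conditions $c_1\in\partial\Delta_{\alpha_n}^0$ and $c_2\in\partial\Delta_{\alpha_n}^\infty$ to the limit. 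Thus $\Gamma$ is compact and $A(\Gamma)\subset\R/(2\pi\Z)$ is nonempty and closed.

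The heart of the argument is surjectivity. I would show $A(\Gamma)$ is also open in $\R/(2\pi\Z)$, so clopen, hence equal to $\R/(2\pi\Z)$ by connectedness. Openness at a given $\alpha_0\in\Gamma$ with $\theta_0=A(\alpha_0)$ is the realization counterpart to the rigidity used in Lemma~\ref{injection}: I would produce a one-parameter quasiconformal deformation of $f_{\alpha_0}$ that locally realizes every nearby angle. Concretely, choose a smooth family $\mu_t$ of $R_\theta$-equivariant Beltrami coefficients in $\D$, supported in a compact sub-annulus, with $\mu_0=0$, whose straightenings $\varphi_t:\D\to\D$ conjugate $R_\theta$ to itself and carry the marked boundary point $1=(h_{\alpha_0}^0)^{-1}(c_1)$ to $e^{\ii t}$. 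Push $\mu_t$ forward by $h_{\alpha_0}^0$ to $\Delta_{\alpha_0}^0$, spread it equivariantly by $f_{\alpha_0}$ to the whole grand orbit of $\Delta_{\alpha_0}^0\cup\Delta_{\alpha_0}^\infty$, and extend by $0$ elsewhere, obtaining an $f_{\alpha_0}$-invariant Beltrami coefficient $\nu_t$ with $\|\nu_t\|_\infty=\|\mu_t\|_\infty<1$. Integrating $\nu_t$ via the measurable Riemann mapping theorem gives a quasiconformal homeomorphism $\Phi_t$ of $\EC$; the conjugate $\widetilde f_t=\Phi_t\circ f_{\alpha_0}\circ\Phi_t^{-1}$ is a quadratic rational map with a $2$-cycle of Siegel disks of rotation number $\theta$ and both critical points on their boundaries. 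Lemma~\ref{sigma} then presents $\widetilde f_t$ as $f_{\alpha(t)}$ for a unique $\alpha(t)\in\Sigma_\theta$, necessarily in $\Gamma$, and the construction forces $A(\alpha(t))=\theta_0+t$ for all small $t$.

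The hard part will be this surjectivity step, and within it the engineering of $\mu_t$: choosing $\mu_t$ so that its straightening genuinely induces the intended rotation of the boundary marking while keeping the linearized rotation number equal to $\theta$; verifying that the grand-orbit pullback has uniformly bounded dilatation so that the measurable Riemann mapping theorem applies; and checking that the Möbius normalization placing $\widetilde f_t$ back into the canonical form $f_{\alpha(t)}\in\Sigma_\theta$ yields a parameter depending continuously on $t$ with $A(\alpha(t))$ tracing an arc through $\theta_0$, so that a full neighborhood of $\theta_0$ in $\R/(2\pi\Z)$ lies in $A(\Gamma)$.
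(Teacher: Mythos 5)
Your reduction to compactness plus surjectivity, using Lemmas~\ref{continuousangle} and~\ref{injection}, is exactly the paper's starting point, and your compactness argument is fine. The gap is in the surjectivity step, and it is not a repair that the paper happens to avoid — the surgery you propose cannot produce any new angle at all.

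The obstruction is the boundary behavior of $\varphi_t$. You ask $\varphi_t:\D\to\D$ to be quasiconformal, fix $0$, conjugate $R_\theta$ to itself, and carry the marked boundary point $1$ to $e^{\ii t}$. But any homeomorphism of $\overline{\D}$ that conjugates $R_\theta$ to $R_\theta$ with $\theta$ irrational has a \emph{rigid rotation} as its boundary extension: $\varphi_t(e^{2\pi\ii n\theta})=e^{2\pi\ii n\theta}\varphi_t(1)$ on a dense orbit forces $\varphi_t(\zeta)=\zeta\varphi_t(1)$ on all of $\partial\D$. Hence $\varphi_t$ rotates the whole boundary by $e^{\ii t}$, and in particular sends $(h_{\alpha_0}^0)^{-1}(f_{\alpha_0}(c_2))=e^{\ii A(\alpha_0)}$ to $e^{\ii(A(\alpha_0)+t)}$. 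The linearizer of $\widetilde f_t^{\circ 2}$ on $\Phi_t(\Delta_{\alpha_0}^0)$ is $L_t=\Phi_t\circ h_{\alpha_0}^0\circ\varphi_t^{-1}$, and after the required renormalization $\widetilde h_t^0(\zeta)=L_t(e^{\ii t}\zeta)$ (so that $1\mapsto\Phi_t(c_1)$) the new conformal angle is
\begin{equation}
\arg\bigl(\widetilde h_t^0\bigr)^{-1}\bigl(\widetilde f_t(\Phi_t(c_2))\bigr)
=\arg\bigl(e^{-\ii t}\varphi_t(e^{\ii A(\alpha_0)})\bigr)
=\arg\bigl(e^{-\ii t}e^{\ii(A(\alpha_0)+t)}\bigr)=A(\alpha_0),
\end{equation}
so $A$ is unchanged, and by Lemma~\ref{injection} even $\alpha(t)=\alpha_0$. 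You have only re-realized the angle you started with; openness of $A(\Gamma)$ is not established.

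The paper proves surjectivity by a purely topological separation argument rather than a surgery. From Lemma~\ref{lem:alpha-0}, every simple path in $\Sigma_\theta$ from $0$ to $\infty$ meets $\Gamma$, so $\Gamma$ separates $0$ from $\infty$. If $A(\Gamma)$ were a proper compact subset of $\R/(2\pi\Z)$, its components would be points or closed arcs, hence (since $A$ is a homeomorphism onto its image) each component of $\Gamma$ would be a point or a simple arc. Letting $\Lambda_0$ be the component of $\C\setminus(\Gamma\cup\{0\})$ containing a small punctured neighborhood of $0$, one shows $\partial\overline{\Lambda}_0$ is a connected compact subset of $\Gamma$ separating $0$ from $\infty$; but a connected subset of a disjoint union of points and arcs is itself a point or an arc, which cannot separate the plane. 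This contradiction gives $A(\Gamma)=\R/(2\pi\Z)$. If you want a realization proof, you would need a surgery that genuinely changes the relative boundary position of the two critical markings (not an $R_\theta$-equivariant Beltrami deformation of the Siegel disk), which is substantially more delicate.
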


\begin{proof}
By Proposition \ref{continuousity} and a similar proof to Lemma \ref{lem:alpha-0},  the set $\Gamma$ is compact in $\C\setminus\{0\}$. By Lemmas \ref{continuousangle} and \ref{injection}, $A$ is a continuous injection. Since $\R/(2\pi\Z)$ is a Hausdorff space, it suffices to prove that $A$ is a surjection.

Assume that $A(\Gamma)$ is a proper subset of $\R/(2\pi\Z)$. Since $\Gamma$ is compact and $A:\Gamma\to A(\Gamma)$  is a homeomorphism, it follows that $A(\Gamma)$ is a compact subset of $\R/(2\pi\Z)$ and every component of $A(\Gamma)$ is a singleton or a closed arc. Considering the homeomorphism $A^{-1}:A(\Gamma)\to\Gamma$, we know that every component of $\Gamma$ is a singleton or a simple arc with two end points.

By Lemma \ref{lem:alpha-0}, $\Gamma$ is a compact set separating $0$ from $\infty$. Let $\Lambda_0$ be the connected component of $\C\setminus(\Gamma\cup\{0\})$ containing a small punctured neighborhood of $0$. Then $\overline{\Lambda}_0$ is a connected compact set and $\partial\overline{\Lambda}_0\subset\Gamma$.
We claim that $\partial\overline{\Lambda}_0$ is connected. Otherwise, $\partial\overline{\Lambda}_0$ has at least two components, and each of them is contained in a singleton or a simple arc since $\partial\overline{\Lambda}_0\subset\Gamma$. This is impossible by the definition of $\overline{\Lambda}_0$. Hence $\partial\overline{\Lambda}_0$ is a connected compact set separating $0$ from $\infty$. However, still by $\partial\overline{\Lambda}_0\subset\Gamma$, we conclude that $\partial\overline{\Lambda}_0$ is contained in singleton or a simple arc, which cannot separate $0$ from $\infty$.  Hence we have $A(\Gamma)=\R/(2\pi\Z)$ and $A$ is a homeomorphism.
\end{proof}

\begin{proof}[{Proof of the Main Theorem}]
By Lemma \ref{lem:only-one} and Proposition \ref{continuousity}, each of $\partial\Delta_\alpha^0$ and $\partial\Delta_\alpha^\infty$ contains at most one critical point, and they move continuously as $\alpha$ varies continuously in $\Sigma_\theta$. It suffices to prove the properties of $\Gamma$.

By Lemma \ref{continuousmap}, $\Gamma$ is a Jordan curve in $\Sigma_\theta$ separating $0$ from $\infty$. For $\alpha\in\Sigma_\theta\setminus\Gamma$, $\partial\Delta_\alpha^0\cup\partial\Delta_\alpha^\infty$ contains exactly one critical point $c_1$ or $c_2$.
We have assumed that the critical points $c_1=c_1(\theta)$ and $c_2=c_2(\theta)$ are marked such that $c_1\in\partial\Delta_{\alpha}^0$ if $\alpha$ is large enough while $c_2\in \partial\Delta_{\alpha}^\infty$ if $\alpha$ is small enough.
In particular, by Corollary \ref{cor:Siegel-crit}, $c_1\in\partial\Delta_\alpha^0$ if $|\alpha|\geq\delta$ and $c_2\in\partial\Delta_\alpha^\infty$ if $0<|\alpha|\leq 1/\delta$ for some $\delta>1$.

We claim that $c_1\in\partial\Delta_{\alpha}^0$ for all $\alpha\in\Gamma_{\Ext}$. Otherwise, assume that $c_1\not\in\partial\Delta_{\alpha'}^0$ for some $\alpha'\in\Gamma_{\Ext}$.
Since $c_1\not\in\partial\Delta_{\alpha}^\infty$ and $c_2\not\in\partial\Delta_{\alpha}^0$ for all $\alpha\in\Sigma_\theta$, this implies that $c_2\in\partial\Delta_{\alpha'}^\infty$.
Let $\eta$ be a simple curve in $\Gamma_{\Ext}$ connecting $\delta$ with $\alpha'$. Since the boundaries of Siegel disks move continuously, similar to the proof of Lemma \ref{lem:alpha-0}, there must exist a point $\alpha''\in\eta$ such that $\partial\Delta_{\alpha''}^0\cup\partial\Delta_{\alpha''}^\infty$ contains the both critical points $c_1$ and $c_2$. This implies that $\alpha''\in\Gamma$, which is a contradiction since $\alpha''\in\eta\subset\Gamma_{\Ext}$. This proves the claim.
By a similar argument, we have $c_2\in\partial\Delta_{\alpha}^\infty$ for all $\alpha\in\Gamma_{\Int}\setminus\{0\}$.
This completes the proof of the Main Theorem.
\end{proof}

\bibliographystyle{amsalpha}
\bibliography{E:/Latex-model/Ref1}

\end{document}